\documentclass[12pt]{amsart}
\usepackage[utf8]{inputenc}
\usepackage{amsfonts}
\usepackage{latexsym}
\usepackage{amssymb}
\usepackage{amsmath}
\usepackage{color}

\usepackage{tikz}
\usepackage{enumerate} 
\usepackage{mathrsfs}
\usepackage{todonotes}

\usepackage[foot]{amsaddr}

\usepackage{etoolbox}
\usepackage{hyperref}

\usepackage{forest}
\forestset{
  default preamble={
   for tree={circle, draw, 
            minimum size=2.1em, 
            inner sep=1pt} 
  }
}

\definecolor{col2}{rgb}{0.95, 0.95, 0.65}

\definecolor{col1}{rgb}{0.3.0.4,0.6}

\definecolor{tc}{rgb}{0.00,0.36,0.26}
\definecolor{tc2}{rgb}{0.77,0.05,0.05}

\usepackage[bottom]{footmisc}

\usepackage[left=1.4cm, top=2.5cm,bottom=2.5cm,right=1.4cm]{geometry}


\newcommand{\R}{\mathbb R}
\newcommand{\N}{\mathbb N}
\newcommand{\T}{\mathbb T}

\newcommand{\E}{\mathbb E}

\renewcommand{\P}{\mathbb P}

\newcommand{\Var}{\mathrm{Var}}





\newtheorem{thm}{Theorem}[section]

\newtheorem{cor}[thm]{Corollary}
\newtheorem{lemma}[thm]{Lemma}
\newtheorem{df}[thm]{Definition}

\newtheorem{thmalpha}{Theorem}

{
\theoremstyle{definition}

}

\definecolor{gurot}{RGB}{180,20,20}
\definecolor{jorot}{RGB}{220,20,20}
\definecolor{darkorange}{rgb}{1.0, 0.55, 0.0}
\definecolor{folly}{rgb}{1.0, 0.0, 0.31}

\usepackage{nomencl}
\makenomenclature


\begin{document}


{
\title[]{Sharp concentration for the largest and smallest fragment \\in a $k$-regular self-similar fragmentation}
}

\author{Piotr Dyszewski, Nina Gantert, Samuel G.~G.~Johnston, Joscha Prochno, Dominik Schmid}
\keywords{Fragmentation, Branching Random Walk, Point Process}
\subjclass[2010]{60J27, 60J80, 60G55}
\date{\today}

\begin{abstract}
We study the asymptotics of the $k$-regular self-similar fragmentation process. For $\alpha > 0$  and an integer $k \geq 2$, this is the Markov process $(I_t)_{t \geq 0}$ in which each $I_t$ is a union of open subsets of $[0,1)$, and independently each subinterval of $I_t$ of size $u$ breaks into $k$ equally sized pieces at rate $u^\alpha$. Let $k^{ - m_t}$ and $k^{ - M_t}$ be the respective sizes of the largest and smallest fragments in $I_t$. 
By relating $(I_t)_{t \geq 0}$  to a branching random walk, we find
that 
there exist explicit deterministic functions $g(t)$ and $h(t)$ such that $|m_t - g(t)| \leq 1$ and $|M_t - h(t)| \leq 1$ for all sufficiently large $t$. 
Furthermore, for each $n$, we study the final time at which fragments of size $k^{-n}$ exist. In particular, by relating our branching random walk to a certain point process, we show that, after suitable rescaling, the laws of these times converge to a Gumbel distribution as $n \to \infty$.  
\end{abstract}

\maketitle

\section{Introduction} \label{sec:intro}


Eighty years ago, Kolmogorov \cite{K1941} initiated the study of fragmentation processes, stochastic processes modelling an object of unit mass that breaks apart as time passes. 
While research in fragmentation processes continued into the latter half of the 20th century  \cite{athreya1985discounted,brennan1986splitting,brennan1987splitting,filippov}, it was not until pathbreaking work by Bertoin \cite{B2001PTRF,bertoin} and Berestycki \cite{berestycki2002ranked} in the early 2000s that fragmentation processes were conceived in a unifying framework. This framework formulates a fragmentation process in terms of a stochastic process $(Y_t)_{t \geq 0}$ taking values in the set
\begin{align} 
\mathcal{S} := \left \{ (s_1,s_2,s_3,\ldots ) \,:\, s_1 \geq s_2 \geq \ldots \geq 0 , \sum_{ i = 1}^\infty s_i \leq 1 \right\},
\end{align}
whose law is governed by a \emph{dislocation measure} $\nu$ on the set $\mathcal{S}$. Writing $Y_t = (y_1(t),y_2(t),\ldots)$, the components $y_1(t) \geq y_2(t) \geq \ldots$ of $Y_t$
 correspond to the sizes of the fragments in the process at time $t$ listed in decreasing order.

In the setting where $\nu$ is finite, the homogeneous fragmentation processes first introduced by Bertoin \cite{B2001PTRF} have a simple description in terms of the dislocation measure:
each fragment of size $u$ has an exponentially distributed lifetime with rate $\nu(\mathcal{S})$, and upon death is replaced by a random collection of fragments of sizes $us_1 \geq us_2 \geq \ldots $, where the sequence $(s_1,s_2,\ldots)$ is distributed according to $\nu( \cdot) / \nu(\mathcal{S})$. In this context, \emph{homogeneous} refers to the fact that the rate at which each fragment breaks is independent of its size, and that the lifetimes and dislocations of individual fragments are independent of the remainder of the system. We remark that in general the measure $\nu$ need not be finite; indeed, infinite dislocation measures may be used to describe the continuous `crumbling' of fragments \cite{B2001PTRF}.

In the following, we will be interested in \emph{self-similar fragmentation processes}, in which fragments behave independently but the rate at which a fragment of size $u$ breaks apart is proportional to $u^\alpha$ for some $\alpha$ in $\mathbb{R}$. 
Self-similar fragmentations were introduced by 
Filippov \cite{filippov}, with their rigorous formulation in terms of general dislocation measures first appearing in \cite{bertoin}. 
The real parameter $\alpha$ is a called \emph{the index of self-similarity}, with $\alpha > 0$ entailing that larger fragments in the process break more quickly than smaller ones, and $\alpha < 0$ entailing the opposite. 

Brennan and Durrett \cite{brennan1986splitting,brennan1987splitting} study the self-similar fragmentation process where, upon death, a fragment of mass of size $u$ splits into exactly two fragments of sizes $Vu$ and $(1-V)u$, where $V$ is uniformly distributed on $[0,1]$.  They show that at large times $t$ the total number of intervals in the process grows in the order $t^{1/\alpha}$ for $0<\alpha<\infty$. 
Goldschmidt and Haas \cite{goldschmidt2010behavior, goldschmidt2016behavior} look at the explosive case $\alpha < 0$, in which after a finite amount of time the entire process consists of \emph{dust} so that there are no intervals of positive size. A work of particular relevance is the article \cite{bertoinJEMS} of Bertoin, where it is shown that if $y_1(t)$ is the size of the largest fragment in a self-similar fragmentation with $\alpha > 0$, then
\begin{align} \label{eq:Bertoin}
\lim_{t \to \infty} \frac{ \log y_1(t) }{ \log t } = - \frac{1}{\alpha} \qquad \text{almost surely}.
\end{align} 
See also the recent work of Dadoun \cite{D2017} for growth-fragmentation processes. While a panoply of exotic dislocation mechanisms fall into the general apparatus of self-similar fragmentation processes, in the present article we will concentrate our attention on the simplest possible fragmentation mechanism:
\begin{df}\label{def:kfrag}
	Fix an integer $k\geq 2$. The $k$-regular self-similar fragmentation process of index $\alpha\in\R$ is the self-similar fragmentation process  $(I_t)_{t \geq 0}$ starting with the single interval $I_0 := [0,1)$ in which an interval of size $u\in(0,1]$ in $I_t$ waits an exponential time with mean $u^{-\alpha}$, and after this time breaks into $k$ equally sized intervals.
\end{df}

Note that by listing the sizes of the intervals of $(I_t)_{t \geq 0}$ in decreasing order, $(I_t)_{t \geq 0}$ gives rise to an $\mathcal{S}$-valued process $(Y_t)_{t \geq 0}$.
The dislocation measure associated with the $k$-regular case belongs to a form of dislocation measures which Goldschmidt and Haas \cite{goldschmidt2016behavior} call `geometric', in that fragment sizes always take the form of a geometric progression $( r^n : n \in \mathbb{N})$ for some $r \in (0,1)$. Goldschmidt and Haas remark that geometric fragmentation processes possess genuinely different properties from non-geometric fragmentations, and should not be regarded as a degenerate special case. The reader is referred to \cite[Section 8]{goldschmidt2016behavior} for a discussion, wherein various other relevant references may be found, e.g.\ Athreya \cite{athreya1985discounted}.

The  relative simplicity of the mechanism means that we are endowed with a variety of exact formulas associated with various functionals of the processes, most notably allowing us to study an alternative representation for the process, where the fragments of sizes $k^{-n}$ are viewed as the $n^{\text{th}}$ generation of a discrete 
$k$-ary tree. 
These exact formulas lead to sharp statements about the asymptotics of the size of the smallest and largest fragments in the process at large times.

In the remainder of the paper, we restrict our attention to the case $\alpha > 0$. Before stating our results in full in Section \ref{sec:results}, we conclude the introduction by giving the principal applications of our main results, showing that we can characterise the sizes of both the largest and smallest fragment at large times to a surprising degree of precision.
{In the sequel we write $\lceil x \rceil$ for the least integer greater than a real number $x$, and will denote by $\R_+$ the set of non-negative real numbers $[0,\infty)$. Finally, let us introduce the parameters
\begin{align*}
\gamma := \log k \qquad \text{and} \qquad \kappa := \frac{1}{\gamma \alpha}.
\end{align*}
Our main result on the largest fragment is a considerable sharpening of Bertoin's estimate \eqref{eq:Bertoin}, stating that if $k^{ - m_t}$ is the size of the largest fragment at time $t$, then $m_t$ has very concentrated behaviour.}

{
\begin{thmalpha} \label{thm:largest}
Let $k^{ - m_t}$ be the size of the largest fragment in the system at time $t$. Then for most times $t$, $m_t$ is likely to be the smallest integer above
\begin{equation*}
g(t) = \kappa \left( \log t - \log \log t -\log ( \gamma \kappa) \right).
\end{equation*}
More precisely, let $\mu_1 := \kappa + \frac{2}{\gamma}$. Then there exists almost surely a $t_0 \in \mathbb{R}_+$ such that for all $t \geq t_0$
\begin{equation*}
m_t \in \left\{ \left\lceil g(t) - \mu_1 \frac{ \log \log t }{ \log t} \right\rceil ,   \left\lceil g(t) + \mu_1 \frac{\log \log t}{ \log t}  \right\rceil \right\}.
\end{equation*}
\end{thmalpha}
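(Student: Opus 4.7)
The natural approach is to reformulate the problem as concentration of the maximum of a branching random walk (BRW). For each vertex $u$ of the infinite $k$-ary tree at generation $|u|=j$, let $W_u\sim \mathrm{Exp}(k^{-j\alpha})$ be an independent waiting time (the time the corresponding fragment persists before splitting), and for $v$ at generation $n$ set
\begin{equation*}
T_v \;=\; \sum_{\substack{u\text{ ancestor of }v\\ |u|<n}} W_u,
\qquad
\bar T_n \;:=\; \max_{|v|=n} T_v.
\end{equation*}
The event $\{m_t\ge n\}$ is precisely the event that every generation-$n$ fragment has been born by time $t$, i.e.\ $\{\bar T_n\le t\}$; hence $m_t$ is the unique integer $n$ with $\bar T_n \le t < \bar T_{n+1}$, and Theorem~\ref{thm:largest} reduces to a sharp almost-sure asymptotic for $\bar T_n$.

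\textbf{Concentration of $\bar T_n$.} The scales $k^{j\alpha}$ grow geometrically in $j$, so each $T_v$ is dominated by its last summand $W_{\mathrm{parent}(v)}$, and the $k^{n-1}$ parent waiting times are genuinely independent. Classical extreme value theory for the maximum of $k^{n-1}$ i.i.d.\ rate-$k^{-(n-1)\alpha}$ exponentials then yields
\begin{equation*}
\bar T_n \;=\; k^{(n-1)\alpha}\bigl((n-1)\gamma + G_n\bigr),
\end{equation*}
with $G_n$ converging in law to a standard Gumbel. Using the explicit upper tail $\mathbb P(G_n>x)\lesssim e^{-x}$ and the doubly-exponentially small lower tail, a Borel--Cantelli sum over $n$ upgrades this to the almost-sure two-sided bound $|G_n|\le C\log n$ for all sufficiently large $n$, valid for any $C>1$. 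The upper tail is handled by a union bound over the $k^n$ leaves; the matching lower tail by a truncated second moment on the count of generation-$(n-1)$ parents $u$ whose waiting time $W_u$ exceeds $k^{(n-1)\alpha}\bigl((n-1)\gamma - C\log n\bigr)$. The sub-leading summands $W_{a_j(v)}$ for $j<n-1$ contribute an amount of order $k^{(n-1)\alpha}$, which is absorbed in the $O(\log n)$ error.

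\textbf{Inversion and a.s.\ control in $t$.} With $\bar T_n$ pinned to $\phi(n) := k^{(n-1)\alpha}(n-1)\gamma$ up to multiplicative error $O(\log n/n)$, the condition $\bar T_n\le t<\bar T_{n+1}$ identifies $m_t$. Since $\partial_n\log\phi = \gamma\alpha + 1/(n-1)\to 1/\kappa$, a multiplicative error of order $\log n/n$ in $\bar T_n$ translates to an additive error of order $\log\log t/\log t$ in the value of $n$ solving $\bar T_n \approx t$; bookkeeping the sub-leading $\log((n-1)\gamma)$ term in $\log\phi$ together with the integer ceiling should produce precisely $\mu_1 = \kappa + 2/\gamma$. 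Finally, since $m_t$ is non-decreasing in $t$ and jumps only at the values $\bar T_n$, the almost-sure control of the sequence $(\bar T_n)_{n\ge 1}$ transfers immediately to a uniform-in-$t$ statement.

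\textbf{Principal difficulty.} The delicate step is the lower-tail Borel--Cantelli bound $G_n \ge -C\log n$ with the sharp constant: one must produce at least one generation-$(n-1)$ vertex with $W_u$ close to its deterministic maximum, simultaneously for all large $n$, in the presence of ancestral correlations. A thinned second-moment/Paley--Zygmund argument on a subfamily of vertices chosen to decorrelate distant branches should suffice; fine-tuning this thinning is what ultimately produces the sharp constant $2/\gamma$ in $\mu_1$.
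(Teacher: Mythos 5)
Your overall plan matches the paper's: reduce to almost-sure concentration of the maximum of the (rescaled) expanding branching random walk, then invert the deterministic centering function to get a statement about $m_t$, using an elementary lemma like the paper's Lemma~\ref{lem:convert} to handle the inversion. The upper-tail step is identical: a union bound over the $k^n$ vertices plus the crude bound $\mathbb{P}(K_n>s)\leq C_q e^{-s}$ gives $\mathbb{P}(\tau_n>2\log n)\lesssim n^{-2}$, which is summable.

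Where you go astray is the lower tail, which you flag as the ``principal difficulty'' requiring a truncated second moment or a thinned Paley--Zygmund argument. No such machinery is needed, and this is the one genuinely clean idea in the paper's proof. Since $K(v)=\sum_{i=0}^{n}q^{\,n-i}W^{(v_i)}\geq W^{(v)}$, the random variable $K(v)$ stochastically dominates a single standard exponential attached to the leaf $v$, and those $k^n$ exponentials are \emph{independent}. Hence $\max_{|v|=n}K(v)\geq\max_{j\leq k^n}W_j$ for i.i.d.\ $W_j\sim\mathrm{Exp}(1)$, and
\begin{equation*}
\mathbb{P}(\tau_n<s)\leq\bigl(1-\mathbb{P}(W_1>s+\gamma n)\bigr)^{k^n}\leq\exp\bigl(-k^n e^{-s-\gamma n}\bigr)=\exp(-e^{-s}),
\end{equation*}
which at $s=-\log(2\log n)$ is $1/n^{2}$ and summable. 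Your worry about ``ancestral correlations'' evaporates once you drop to the last waiting time; there is nothing to decorrelate. Relatedly, your claim that fine-tuning the thinning ``produces the sharp constant $2/\gamma$'' misidentifies where the pieces of $\mu_1$ come from: the $2/\gamma$ term arises from the \emph{upper} tail deviation $+2\log n$ (an arbitrary choice of exponent to make Borel--Cantelli work), and the $\kappa$ term from the \emph{lower} tail deviation $-\log(2\log n)$. Neither constant is the output of an optimized second-moment argument; they are both just convenient Borel--Cantelli thresholds carried through the inversion of $a(x)=q^{-x}(\gamma x-\log(2\log x))$ and $b(x)=q^{-x}(\gamma x+2\log x)$.
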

Roughly speaking, for most values of $t$ the quantities $\left\lceil g(t) - \mu_1 \frac{ \log \log t }{ \log t} \right\rceil$ and $\left\lceil g(t) + \mu_1 \frac{ \log \log t }{ \log t}  \right\rceil$ concide, so that Theorem \ref{thm:largest} guarantees that for such $t$ we have $m_t = \lceil g(t) \rceil$. Occasionally an integer $n$ separates $g(t) - \mu_1 \frac{ \log \log t }{ \log t }$ and $g(t) + \mu_1 \frac{ \log \log t }{ \log t }$; it is in these time windows that $m_t$ has an opportunity to `jump' from $n$ to $n+1$. 


We now turn our attention to the size $k^{ - M_t}$ of the smallest fragment. Here we find that for large $t$, the law of the random variable $M_t$ is also highly concentrated:

\begin{thmalpha} \label{thm:smallest}
Let $k^{ - M_t}$ be the size of the smallest fragment in the system at time $t$. Then for most times $t$, $M_t$ is likely to be the smallest integer above
\begin{equation*}
h(t) :=  \kappa \left( \log t + \sqrt{2\gamma\log t}-\frac{1}{2}\log\log t + c  \right),
\end{equation*}
where $c := -\frac{1}{2\kappa}-\log \kappa +\gamma-\frac{1}{2}\log(2\gamma)+1$ is a constant. More precisely, let $\mu_2 := 2 \kappa^{2/3}$. Then there exists almost surely a $t_0 \in \mathbb{R}_+$ such that for all $t \geq t_0$
\begin{equation*}
M_t \in \left \{  \left \lceil h(t) - \mu_2 \frac{1}{ \log^{1/3} t} \right\rceil , \left \lceil  h(t) + \mu_2 \frac{1}{ \log^{1/3} t} \right\rceil \right\}.
\end{equation*}
\end{thmalpha}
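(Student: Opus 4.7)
\emph{Step 1 (BRW reformulation).} I would represent the fragmentation on a $k$-ary tree by assigning each internal vertex $w$ (at depth $d(w)$) an independent exponential lifetime $\tau_w$ of mean $k^{d(w)\alpha}$. The \emph{birth time} of a depth-$n$ vertex $v$ is then
\begin{equation*}
T_v \;=\; \sum_{w \prec v} \tau_w,
\end{equation*}
where the sum is over proper ancestors of $v$, and the fragment of size $k^{-n}$ associated with $v$ lives in $I_t$ precisely during $[T_v, T_v + \tau_v]$. Under this encoding
\begin{equation*}
M_t \;=\; \max\bigl\{\, n \geq 0 \,:\, \min_{|v|=n} T_v \leq t\,\bigr\},
\end{equation*}
so the problem is reduced to analysing the minimum of a generation-dependent branching random walk.

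\emph{Step 2 (sharp first-moment threshold).} Let $N_n(t) := \#\{v : |v|=n,\, T_v \leq t\}$. Writing $n = \ell + \Delta$ with $\ell := \kappa \log t$ and carrying out a Laplace-inversion / saddle-point analysis on the distribution of $T_v$ (exploiting the geometric growth of the $\tau_w$-means), I would derive an expansion of the form
\begin{equation*}
\P(T_v \leq t) \;=\; C(\Delta)\, k^{-\alpha \Delta(\Delta-1)/2}\bigl(1 + o(1)\bigr),
\end{equation*}
where the quadratic exponent reflects the cost of forcing the last $\Delta$ long exponential steps to be atypically short, and $C(\Delta)$ is an explicit polynomial prefactor. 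Solving $\E[N_n(t)] = k^n \P(T_v \leq t) = 1$ for $\Delta$ then recovers $h(t)$: the quadratic exponent produces the $\kappa \sqrt{2\gamma \log t}$ correction, the prefactor yields the $-\tfrac12 \log \log t$ term, and the remaining sub-exponential constants assemble precisely into $c$. Pinning down $c$ is the main technical obstacle of the theorem and requires a genuine second-order expansion in the saddle-point analysis; the rest of the proof is organised around this sharp asymptotic.

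\emph{Step 3 (concentration at scale $(\log t)^{-1/3}$).} The lower bound $M_t \geq \lceil h(t) - \mu_2 (\log t)^{-1/3}\rceil$ would follow from a Paley--Zygmund argument applied to $N_n(t)$. To control $\E[N_n(t)^2]$ I would decompose pairs $(v,v')$ of depth-$n$ vertices according to the depth $j$ of their most recent common ancestor, compute the resulting conditional probabilities via the expansion of Step 2, and sum over $j$; the dominant contribution comes from small $j$ and matches $\E[N_n(t)]^2$ up to acceptable error. The exponent $1/3$ emerges as the unique scale $\delta_t \asymp (\log t)^{-1/3}$ at which $\E[N_n(t)]$ still just dominates its standard deviation when $n$ is shifted down by $\delta_t$. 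For the upper bound $M_t \leq \lceil h(t) + \mu_2 (\log t)^{-1/3}\rceil$, Markov's inequality together with Step 2 yields a summable tail along a geometric grid $t_j = e^j$; Borel--Cantelli combined with the monotonicity of $t \mapsto M_t$ then transfers the estimate to all sufficiently large $t$, and combining both inequalities delivers the almost sure dichotomy claimed in Theorem B.
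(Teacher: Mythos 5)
Your overall strategy — encode the fragmentation as an expanding branching random walk, locate the first-moment threshold through sharp left-tail asymptotics for the birth times, then run a second-moment (Paley--Zygmund) argument and Borel--Cantelli — is indeed the route the paper takes. The technical core of Step 2 is where the paper differs in method: rather than a Laplace-inversion saddle-point analysis, it works with the rescaled variable $K_{n}=q^n S_n$, writes $\mathbb{P}(K_{m-1}\le s)$ exactly as an integral over the scaled simplex (Lemma \ref{lem:SimplexBound}), optimises the cutoff $m(s)$ (Lemma \ref{lem:specialchoice}), and thereby derives the sharp estimate $\mathbb{P}(K_n\le s)\asymp e^{-F_q(s)}$ of Theorem \ref{thm:smalls} with explicit uniform constants. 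Your saddle-point ansatz is morally equivalent (the quadratic exponent $\Delta(\Delta-1)/(2\kappa)$ matches $(\log\tfrac1s)^2\cdot\kappa/2$), but in this problem the cross-term $\log\tfrac1s\cdot\log\log\tfrac1s$ and the $(\tfrac12+\kappa)\log\log\tfrac1s$ correction must both be tracked to pin down $c$, and the paper's direct simplex computation is what delivers those cleanly with uniform error bounds needed for the subsequent second-moment estimate.

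There is, however, a genuine gap in your Step 3. To conclude $M_t \le \lceil h(t)+\mu_2(\log t)^{-1/3}\rceil$ for all large $t$, a first-moment bound along the geometric grid $t_j=e^j$ is far too coarse: across one grid step, $h(t)$ increases by roughly $\kappa$, so monotonicity of $t\mapsto M_t$ only gives $M_t\le M_{t_{j+1}}\le \lceil h(t_{j+1})+\cdots\rceil$, which exceeds the claimed bound by $\Theta(1)$ integers. The paper avoids this entirely by working on the integer scale: it controls the jump times $T_n=\min_{v\in\T_n}S(v)$ at which $M_t$ increments, proves the two-sided almost-sure bound \eqref{eq:ConvergenceRateKmin} on $\log K_n^{\min}=\log (q^n T_n)$, and then uses the elementary inversion Lemma \ref{lem:convert} to pass from control of the $T_n$'s to control of $M_t$ for \emph{all} $t$. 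Any discretisation of $t$ must be fine enough that $h$ varies by $o(1)$ between consecutive points, and the inversion lemma is the clean way to organise this. Relatedly, your claim that the exponent $1/3$ ``emerges as the unique scale'' is not accurate: in the paper the actual second-moment window is $z_n^{-1}\log^2 z_n\asymp n^{-1/2}\log^2 n$ (see \eqref{def:s+-}), and the exponent $1/3$ is simply a convenient slack chosen to simultaneously dominate this and the $O(n^{-1/2}\log n)$ error incurred in replacing the implicit $z_n$ of \eqref{eq:znEquation} by its explicit expansion $w_n$ in \eqref{eq:znasymptotics}.
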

}

\vskip 2mm

The rest of the paper is organised as follows. In Section~\ref{sec:results}, we describe the representation of the fragmentation process 
as a certain time-inhomogeneous branching random walk, which is key to our proofs. We also give our main results on point process convergence (Theorem \ref{thm:largestpp} and Corollary \ref{cor:largestgumbel}) for the branching random walk, which explain why the sizes of the largest and smallest fragment satisfy such a sharp concentration property.
In Section~\ref{sec:typical}, we study weighted sums of exponential random variables and their relation to the $q$-Markov chain: the increasing Markov chain on $\{0,1,2,\ldots\}$ which jumps from a site $j$ to $j+1$ at rate $q^j$ (in our case, $q= k^{-\alpha}$). {In Section~\ref{sec:largest}, we prove our results on the largest fragments in the process. The final two sections, Section \ref{sec:smalls} and Section \ref{sec:smallest}, are dedicated to our work on the smallest fragments in the process.}

\section{The {associated} branching random walk} \label{sec:results}

This section is dedicated to giving a complete statement of our main results in their general form. 
Through the majority of the proofs in the paper, we consider the fragments as vertices in a $k$-ary tree, where the offspring of an interval are the $k$ intervals it splits into.
We study the time of the appearance of the fragments using the fact that the fragmentation can be represented as a certain inhomogeneous branching random walk which we shall now describe.

\subsection{Representation as BRW} \label{sec:time}

We now explain the representation of the process in terms of an expanding branching random walk, see ~\cite{athreya1985discounted}. For the $k$-regular self-similar fragmentation of index $\alpha$, we set
\begin{equation}\label{qdef}
q: = k^{-\alpha},
\end{equation}
and note that $q< 1$ under our assumption $\alpha \in(0,\infty)$.
Let $k^{-X_t}$ be the size of the fragment containing $0$, in other words the fragment of the form 
$\left[0, k^{-X_t} \right)$, present in the system at time $t$. Then the process $(X_t)_{t \geq 0}$ forms a~Markov chain on $\{0,1,2, \ldots\}$ satisfying $X_0=0$ and
\begin{align*}
\lim_{ h \downarrow 0} \frac{1}{h}\, \mathbb{P} \left( X_{t+h} = j | X_t = i \right) =
\begin{cases}
\lambda_i \qquad &\text{if $j = i+1$},\\
0 \qquad &\text{otherwise},
\end{cases}
\end{align*}
where $\lambda_i=q^i$.   
For $n\in\N$, define the time of fragmentation of the fragment $\left[0, k^{-n} \right)$ into $k$ fragments of sizes $k^{-(n+1)}$ to be $S_n := \sup \{ t \geq 0 : X_t = n \}$. It follows that $\{ X_t = n \} = \{ S_n > t, S_{n-1} \leq t \}$, and that $\{ X_t \leq n \} = \{ S_n > t \}$. 
Moreover, we may write 
\begin{equation}\label{eq:2:Sn}
	S_n = \sum_{ i = 0}^n \lambda_i^{-1} W_i,
\end{equation}	 
where, for each $i\in\{0,1,\dots,n\}$, $\lambda_i^{-1}W_i$ is the amount of time $X_t$ spends at the state $i$, and hence $W_i$ is a standard exponential random variable. 

The same analysis can be carried through on every interval. The dependence structure in the resulting system can be described using  branching processes. Each interval breaks into $k$ pieces and thus we may consider each interval $v$ of size 
$k^{ - n}$ living for some time period as a vertex $v$ within the $n^{\text{th}}$ generation of a $k$-regular tree. Indeed, let $\T_n$ denote the set of subintervals of the form $[m/k^n , (m+1)/k^n)$, $m\in\{0,1,\dots,k^n-1\}$ so that 
$\T_n$ has $k^n$ elements. Write $\T = \bigcup_{n \in \N}\T_n$ for the set of all subintervals that can appear in the system. For $v \in \T$, let $k^{-|v|}$ denote the size of $v$, in other words $|v|=n$ for $v \in \T_n$. Finally for intervals $v, w \in \T$ let $v \wedge w$ denote the smallest (in the sense of inclusion) element of $\T$ containing $v$ and $w$. Then $v \wedge w$ is the most recent common ancestor of $v$ and $w$. We will also write $v \geq w$ whenever $v \subseteq w$. 
Letting $S(v)$ denote the time at which an element $v$ of $\T_n$ of size $k^{ -n}$ breaks into $k$ pieces of sizes 
$k^{ - (n+1)}$, we now see that the set $\left\{ S(v) : v \in \T_n \right\}$ coincides with the positions of the $n^{\text{th}}$ generation of a certain branching random walk in which the step size distribution changes from generation to generation. 
The time of the first splitting is a standard exponential 
random variable $S([0,1)) = W^{([0,1))}$ and each particle in generation $n$ has exactly $k$ children in generation $n+1$. If $w \in \T_{n+1}$ is a child of $v\in\T_n$, then
\begin{align}\label{recforS}
S(w) = S(v) + q^{ - |w|} W^{(w)},
\end{align}
where $W^{(w)}$ is a standard exponential random variable which is independent of $S(v)$, recalling $q<1$. In fact, the random variable $q^{ - |w|} W^{(w)}$ is equal to the length of time that the interval $w$ exists in the process until it splits. Since $q < 1$, this random walk gets slower and slower as $n$ becomes large. We will refer to $S= (S(v))_{v \in \T}$  as the \textit{expanding branching random walk} as proposed in~\cite{athreya1985discounted}. It is natural to consider for all $v \in \T$ the rescaled quantities
\begin{align*}
 K(v)  := q^{|v|} S(v) 
\end{align*}
We will refer to $(K(v))_{v \in \T}$ as the \textit{rescaled expanding branching random walk}. The jumps in the rescaled branching random walk have the simple description that if $w$ is a child of $v$, then
\begin{align*}
K(w) = qK(v) + W^{(w)};
\end{align*}
that is, a particle inherits $q$ times their parent's position, plus a standard exponential. It is easily seen that as $n$ becomes large, for a typical $v \in \T_n$, the sum $K(v)$ has order $1$. In fact, the marginal law of each random variable $K(v)$ for $v \in \T_n$ is equal in distribution to the weighted sum
\begin{equation}\label{eq:2:kndef}
K_n := \sum_{ i = 0}^{n} q^i W_i,
\end{equation} 
where the $W_i$ are i.i.d.~standard exponential random variables. The collection $\{K_n\}_{n\in\N}$ forms a perpetuity sequence with almost sure limit $K_\infty$ being the solution to
\begin{equation}\label{eq:2:rde}
	K_\infty \stackrel{d}{=} q K_\infty + W, \qquad \mbox{$K_\infty$ independent from $W$,}
\end{equation}
where $W$ is a standard exponential random variable. Random variables of this type were studied in the literature~\cite{denisov2007theorem,rootzen1986extreme} with a heavy emphasis on the right tail behaviour $\mathbb{P}(K_\infty>t)$ as $t \to \infty$. A careful and delicate analysis of the upper and lower tails of $K_\infty$ will play an important role in our study of the asymptotics of the largest and smallest fragments of the process.

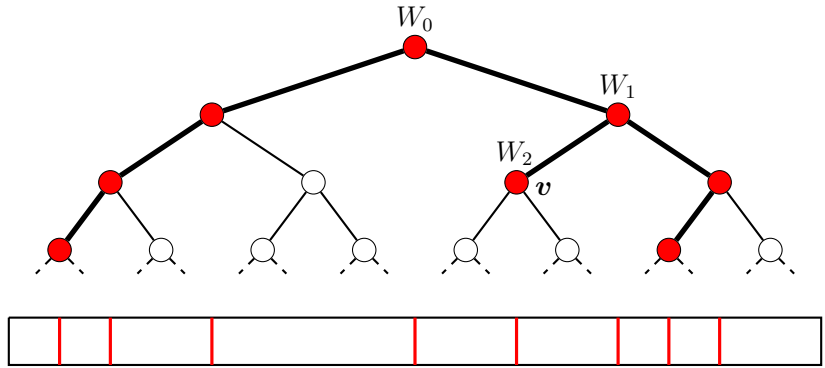
\begin{figure}

\begin{center}
\begin{tikzpicture}[scale=0.9]

	\node[shape=circle,scale=0.8,draw,fill=red] (A1) at (0,0){} ;
 	\node[shape=circle,scale=0.8,draw,fill=red] (A2) at (-3,-1){} ;
	\node[shape=circle,scale=0.8,draw,fill=red] (B2) at (3,-1) {};
	
	\node[shape=circle,scale=0.8,draw,fill=red] (A3) at (-4.5,-2){} ;
	\node[shape=circle,scale=0.8,draw] (B3) at (-1.5,-2){} ;
	\node[shape=circle,scale=0.8,draw,fill=red] (C3) at (1.5,-2){} ;
	\node[shape=circle,scale=0.8,draw,fill=red] (D3) at (4.5,-2){} ;
	
	\node[shape=circle,scale=0.8,draw,fill=red] (A4) at (-5.25,-3){} ;
	\node[shape=circle,scale=0.8,draw] (B4) at (-3.75,-3){} ;
	\node[shape=circle,scale=0.8,draw] (C4) at (-2.25,-3){} ;
	\node[shape=circle,scale=0.8,draw] (D4) at (-0.75,-3){} ;
	\node[shape=circle,scale=0.8,draw] (E4) at (0.75,-3){} ;
	\node[shape=circle,scale=0.8,draw] (F4) at (2.25,-3){} ;
	\node[shape=circle,scale=0.8,draw,fill=red] (G4) at (3.75,-3){} ;
	\node[shape=circle,scale=0.8,draw] (H4) at (5.25,-3){} ;

	\draw[line width=2pt] (A1) to (A2);
	\draw[line width=2pt] (A1) to (B2);
	\draw[line width=2pt] (A2) to (A3);
	\draw[thick] (A2) to (B3);
	\draw[line width=2pt] (B2) to (C3);
	\draw[line width=2pt] (B2) to (D3);	
	
	\draw[line width=2pt] (A3) to (A4);
	\draw[thick] (A3) to (B4);
	\draw[thick] (B3) to (C4);
	\draw[thick] (B3) to (D4);	
	\draw[thick] (C3) to (E4);
	\draw[thick] (C3) to (F4);
	\draw[line width=2pt] (D3) to (G4);
	\draw[thick] (D3) to (H4);			
	
	\draw[thick, dashed] (A4) to (-5.6,-3.35);		
	\draw[thick, dashed] (A4) to (-4.9,-3.35);
	\draw[thick, dashed] (B4) to (-4.1,-3.35);		
	\draw[thick, dashed] (B4) to (-3.4,-3.35);			
	\draw[thick, dashed] (C4) to (-2.6,-3.35);		
	\draw[thick, dashed] (C4) to (-1.9,-3.35);			
	\draw[thick, dashed] (D4) to (-1.1,-3.35);		
	\draw[thick, dashed] (D4) to (-0.4,-3.35);	
		
	\draw[thick, dashed] (E4) to (0.4,-3.35);		
	\draw[thick, dashed] (E4) to (1.1,-3.35);
	\draw[thick, dashed] (F4) to (1.9,-3.35);		
	\draw[thick, dashed] (F4) to (2.6,-3.35);			
	\draw[thick, dashed] (G4) to (3.4,-3.35);		
	\draw[thick, dashed] (G4) to (4.1,-3.35);			
	\draw[thick, dashed] (H4) to (4.9,-3.35);		
	\draw[thick, dashed] (H4) to (5.6,-3.35);

	\draw[thick] (-6,-4) -- (6,-4) -- (6,-4.7) -- (-6,-4.7) -- (-6,-4);			

   \draw[line width=1.2pt,red] (-5.25,-4) -- (-5.25,-4.7);
   \draw[line width=1.2pt,red] (-4.5,-4) -- (-4.5,-4.7);
   \draw[line width=1.2pt,red] (-3,-4) -- (-3,-4.7);
   \draw[line width=1.2pt,red] (0,-4) -- (0,-4.7);
   \draw[line width=1.2pt,red] (1.5,-4) -- (1.5,-4.7);
   \draw[line width=1.2pt,red] (3,-4) -- (3,-4.7);
   \draw[line width=1.2pt,red] (3.75,-4) -- (3.75,-4.7);
   \draw[line width=1.2pt,red] (4.5,-4) -- (4.5,-4.7);

		\node[thick, scale=0.9]  at (1.9,-2.1){$\boldsymbol{v}$};	
		
		\node[scale=0.9]  at (1.47,-1.57){$W_2$};	
		\node[scale=0.9]  at (3,-0.58){$W_1$};		
		\node[scale=0.9]  at (0,0.42){$W_0$};

\end{tikzpicture}
\end{center}
\caption{\label{fig:Branching}  Visualization of a $2$-regular self-similar fragmentation of index $\alpha$, and the genealogical tree of its associated branching random walk at some time $t\geq0$. All sites present in the tree at time $t$ are marked in red. Note that the i.i.d.~standard exponential random variables $(W_i)_{i\in\N}$ in the definition of  $S(v)$ for the site $v$ must satisfy $\sum_{i=0}^{|v|} 2^{\alpha i }W_i \leq t$.}
\end{figure}

We conclude this section on the representation with a branching random walk emphasizing the scaling on which the process may be viewed. Indeed, consider the interval $[0,k^{-n})$ -- a representative of the typical interval of size $k^{ - n}$ -- which exists for a random period of time during the process. This random period of time is equal in law to 
\begin{align*}
[ q^{ - (n-1)} K_{n-1} , q^{ - (n-1)} K_{n-1} + q^{ -n} W ),
\end{align*}
where $W$ is a standard exponential random variable and $K_n$ is given by~\eqref{eq:2:kndef} (so that in particular, $K_n$ has unit order when $n$ is large). In particular, loosely speaking we have:
\begin{align*}
\text{The times $t $ for which the intervals of size $k^{-n}$ exist in the process are of order $q^{ - (1 + o(1)) n}$ }.
\end{align*} 
Inverting this relation gives:
\begin{align*}
\text{The intervals of sizes $k^{ - n}$ existing at a time $t$ have the order $n = (1 + o(1)) \kappa \log t$}.
\end{align*}
where, as in the introduction, $\kappa = 1 / \log (1/q)$. {In particular, this discussion sketches the first order scale on which the process lives: the typical interval at time $t$ has size $k^{ - (1 + o(1)) \kappa \log t}$. Note that Theorem \ref{thm:largest} and Theorem \ref{thm:smallest}  state that indeed  \emph{every} interval has this size. }


\subsection{Largest fragments in the process} \label{sec:largest statements}

Recall Theorem \ref{thm:largest} in the introduction, which stated that if $k^{ - m_t}$ is the size of the largest fragment in the process at time $t$, then with high probability for all large times $t$, $m_t$ is one of the integers neighbouring the quantity
\begin{align*}
\kappa \log t - \kappa \log \log t - \kappa \log ( \gamma \kappa ) .
\end{align*} 
In fact,  this is explained by a far more descriptive result, which we now elucidate from the branching random walk perspective.
Given an element $v$ of $\T_n$, for each $0 \leq i \leq n$, let $v_i$ be the unique ancestor of $v$ in generation $i$, i.e.\, in $\T_i$. One key property of the process $(K(v))_{v \in \T}$ is that the majority of mass in each quantity $K(v)$ is due to recent ancestors. Indeed, we have the representation
\begin{align}\label{repofK}
K(v) = \sum_{ i = 0}^{|v|} q^{|v|-i} W^{(v_i)},
\end{align}
so that most of the mass in $K(v)$ is due to recent ancestors of $v$: those terms $W^{(v_i)}$ where $i$ is close to $|v|$. Intuitively, this implies that to a large extent, the random variables $\left( K(v) : v \in \T_n \right)$ are asymptotically independent.
We note for further reference that \eqref{repofK} implies that for $m< n$, with $v_m$ denoting the ancestor of $v$ in generation $m$,
\begin{align}\label{recrepofK}
K(v) = q^{n-m} K(v_m) + \widetilde{K}_{n-m+1},
\end{align}
where $\widetilde{K}_{n-m+1}$ is independent of $K(v_m)$ and has the same law as $K_{n-m+1}$.
Moreover, it is not too hard to show (we do it in Section \ref{sec:typical}) that the upper tails of the $K_n$ take the form
\begin{align*} 
\mathbb{P} \left( K_n > s \right) = (1 + o(1)) e^{ - s} / \varphi_n(q) \qquad \text{for large $s$},
\end{align*}
where
\begin{equation}\label{eq:varphi def}
	\varphi_n(q) := \prod_{j=1}^n(1-q^j),\,  n \geq 1, \qquad \varphi_0(q) :=1.
\end{equation}
In particular, the maximal elements of the collection $(K(v))_{v \in \T_n}$ behave a lot like the maximum of $k^n$ independent random variables with exponential tails: namely, like a Gumbel random variable. We mention in passing that $\varphi_n(q)$ is a decreasing function of $n$, 
and that as $n \to \infty$, $\varphi_n(q)$ converges to the Euler function $\varphi_\infty( q) := \prod_{ i = 1}^\infty ( 1 - q^i )$, which takes strictly positive values for $q \in (0,1)$. This may be seen, for instance, from Eulers pentagonal number theorem, see \cite{vardi1999introduction}, or from the well-known fact that for 
$0< a_i < 1$, we have $\Pi (1-a_i) > 0$ if and only if $\sum a_i < \infty$.\\

Let $N_n$ be the point process on the real line given by 
\begin{align}\label{def:NnAndJv}
N_n=\sum_{v \in \T_n}\delta_{J(v)}, \qquad J(v) := K(v) - \gamma |v|,
\end{align}
where we recall that $\gamma = \log k$. Our main result 
states that the elements of  $(J(v))_{v \in \T_n}$ behave like a Poisson point process on the real line. 

\begin{thmalpha} \label{thm:largestpp}
As $n \to \infty$, the point process $N_n$ converges in distribution (in the sense of vague convergence from \cite{last2017lectures}) to a Poisson point process with intensity measure
\begin{align} \label{eq:intensity}
e^{ - s} ds / \varphi_\infty(q).
\end{align}
Moreover, the neighbouring point processes are asymptotically independent, in the sense that for any $\ell \geq 1$, the vector of point processes $(N_n,\ldots,N_{n + \ell - 1})$ converges in distribution to a vector of $\ell$ independent Poisson processes with intensity given in \eqref{eq:intensity}.
\end{thmalpha}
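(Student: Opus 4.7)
The plan is to invoke Kallenberg's theorem for convergence of simple point processes to a Poisson point process with diffuse intensity on $\R$: for $N_n$ to converge vaguely to the PPP with intensity $\mu(\dif s)=e^{-s}/\varphi_\infty(q)\,\dif s$, it suffices to verify, for every bounded interval $A=(a,b]$, that $\E[N_n(A)]\to\mu(A)$ and $\P(N_n(A)=0)\to e^{-\mu(A)}$. The intensity estimate is essentially immediate from the branching random walk picture: since each $K(v)$ with $v\in\T_n$ is distributed as $K_n$,
\[
\E[N_n((a,b])]=k^n\bigl[\P(K_n>\gamma n+a)-\P(K_n>\gamma n+b)\bigr],
\]
so a uniform tail asymptotic $\P(K_n>s)=(1+o(1))e^{-s}/\varphi_n(q)$, valid simultaneously as $n,s\to\infty$, combined with $\varphi_n(q)\to\varphi_\infty(q)$ and $k^n=e^{\gamma n}$, delivers $\mu((a,b])=(e^{-a}-e^{-b})/\varphi_\infty(q)$.

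For the void probabilities I would use the factorial moment method: Poisson convergence is equivalent to $\E[N_n(A)^{\underline r}]\to\mu(A)^r$ for every $r\geq 1$, where $(\cdot)^{\underline r}$ denotes the descending factorial. I would expand this as a sum over ordered $r$-tuples $(v_1,\ldots,v_r)\in\T_n^r$ of distinct vertices and stratify by the genealogy, namely by the depths $m_{ij}=|v_i\wedge v_j|$. The central point is that, iterating \eqref{recrepofK}, once one conditions on $K$ at the most recent common ancestors of the tuple, the remainder pieces $K(v_i)-q^{n-m_i^\star}K(u_i^\star)$ are independent copies of $K_{n-m_i^\star-1}$ (here $u_i^\star$ denotes a deepest ancestor of $v_i$ also shared with some other $v_j$). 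For \emph{shallow} tuples, all $m_{ij}$ bounded by a fixed $M$, the joint probability therefore factorises asymptotically into $\prod_i\P(J(v_i)\in A)$; since the number of such tuples is $k^{rn}(1+o(1))$ as $M$ is held fixed and $n\to\infty$, this part of the sum tends to $\mu(A)^r$. For \emph{deep} tuples with some $m_{ij}>M$, a geometric summation applies: when $n-m_{ij}$ is large the pair factorises with multiplicative error $O(q^{n-m_{ij}})$, while when $n-m_{ij}$ is small the joint event forces $K(u)$ to attain the exponentially rare value $\gamma n/q^{n-m_{ij}}$. Together these yield a total contribution of $O(k^{-M})$, so letting $n\to\infty$ and then $M\to\infty$ completes the argument.

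For the joint convergence $(N_n,\ldots,N_{n+\ell-1})$ I would run the same factorial moment computation on mixed tuples containing vertices from several consecutive generations, using the decomposition $K(w)=q^s K(v)+\widetilde K_{s-1}$ with $\widetilde K_{s-1}$ independent of $K(v)$ and distributed as $K_{s-1}$, valid whenever $w\in\T_{n+s}$ is a descendant of $v\in\T_n$. For $J(v)\in A$ and $J(w)\in B$ to occur simultaneously with $v$ a strict ancestor of $w$, the extra piece $\widetilde K_{s-1}$ must make up the difference $\gamma(n+s)-q^s K(v)\gg 1$, an exponentially rare event driven by exponentials on the last $s$ edges of $w$ that are independent of everything determining $J(v)$; the same stratification then reduces joint factorial moments to products $\prod_j\mu(A_j)^{r_j}$, i.e.\ asymptotic independence across generations.

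The chief technical obstacle is the uniform tail asymptotic $\P(K_n>s)=(1+o(1))e^{-s}/\varphi_n(q)$ as both $n$ and $s$ tend to infinity (in our application $s=\gamma n+O(1)$, so they diverge at comparable rates), with an effective quantitative error sharp enough to close the factorial moment sums. The perpetuity identity \eqref{eq:2:rde} and the explicit product formula for the Laplace transform of $K_n$ supply the structural tools, but upgrading a pointwise $s\to\infty$ limit to a joint-in-$(n,s)$ asymptotic demands careful analysis of the convolution of geometrically rescaled exponentials; I expect this to be the technical heart of Section~\ref{sec:typical}.
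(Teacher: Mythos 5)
Your overall architecture matches the paper's: Kallenberg's criterion (avoidance functions plus intensity), a factorial-moment / factorial-measure expansion over tuples, and a stratification of tuples by genealogy into a ``bulk'' set that factorises and a ``rare'' set of closely related tuples. The paper's concrete choices differ in two respects: it picks an $n$-dependent cutoff generation $n-m$ with $m=\tilde\theta_q n$ (rather than your fixed $M$, sent to infinity after $n$), and it establishes the lower bound via an FKG-type inequality on the tree (Lemma~\ref{lem:fkg} in the paper) rather than via a two-sided asymptotic factorisation for shallow tuples. Your route through fixed $M$ and two-sided factorisation is plausible, but you would still need a quantified bound showing that the conditional error $q^{n-m}K(v\wedge w)$ is negligible in both directions simultaneously across all shallow tuples; that is what FKG sidesteps for the lower bound.

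The genuine gap is in your treatment of the closely related (deep) tuples. You assert that when $n-m_{ij}$ is small ``the joint event forces $K(u)$ to attain the exponentially rare value $\gamma n/q^{n-m_{ij}}$.'' This is not true: writing $K(v)=q^{j}K(u)+\widetilde K^{v}$ and $K(w)=q^{j}K(u)+\widetilde K^{w}$ with $j=n-m_{ij}$, the joint event $\{K(v)>L\}\cap\{K(w)>L\}$ can and does occur with $K(u)$ of unit order provided both independent remainders $\widetilde K^{v},\widetilde K^{w}$ are large. The paper's Lemma~\ref{lem:2bound} analyses the convolution integral carefully and extracts the decay rate $\lambda_q=\min\{1/q,\,2-q\}$; since $(1-q)^2>0$, in fact $\lambda_q=2-q$ always, i.e.\ the dominant scenario is precisely the ``both remainders large'' one that your mechanism ignores. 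Your inequality $\P(\text{joint})\leq\P\bigl(K(u)>\gamma n/q^{j}\bigr)$ is simply false, and without a replacement along the lines of Lemma~\ref{lem:2bound} (any valid $\lambda_q>1$ would do) the deep-tuple sum does not close: a crude bound $\P(\text{joint})\leq\P(K_n>L)\approx e^{-L}$ makes that sum diverge. Relatedly, you have misplaced the technical centre of gravity: the uniform tail estimate $\P(K_n>s)=(1+o(1))e^{-s}/\varphi_n(q)$ is elementary from the explicit $q$-series formula \eqref{eq:K_N} (it is the paper's short Lemma~\ref{lem:tails}), whereas the genuinely delicate step is the joint tail bound for pairs and the careful decomposition into distantly/closely related tuples in Lemmas~\ref{lem:distant}--\ref{lem:2bound}.
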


Let us now consider the large fragments. 
One immediate consequence of Theorem \ref{thm:largestpp} is the following result on the asymptotic behaviour of
\begin{equation}\label{eq:2:deftau}
	K^{\mathrm{max}}_n := \max \left \{ K(v) : v \in \T_n \right\} \qquad \mbox{and} \qquad \tau_n := K^{ \mathrm{max}}_n - \gamma n= \max \left\{ J(v)  : v \in \T_n   \right\}.
\end{equation}

\begin{cor} \label{cor:largestgumbel}
 Let $\tau_n$ be defined as in~\eqref{eq:2:deftau}. Then, as $n \to \infty$, $\tau_n$ converges in distribution to a shifted Gumbel random variable, i.e.\,
\begin{align*}
\lim_{n \to \infty} \mathbb{P} \left( \tau_n \leq s \right) = \exp \left( - e^{ -s} / \varphi_\infty(q) \right).
\end{align*}
\end{cor}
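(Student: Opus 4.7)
The plan is to deduce the Gumbel limit for $\tau_n$ directly from the Poisson point process convergence in Theorem~\ref{thm:largestpp}. The key observation is that
\begin{equation*}
\{\tau_n \leq s\} = \{N_n((s,\infty)) = 0\},
\end{equation*}
so if $N$ denotes a Poisson point process with intensity $e^{-u}\,du/\varphi_\infty(q)$, then heuristically
\begin{equation*}
\lim_{n\to\infty}\P(\tau_n \leq s) = \P(N((s,\infty)) = 0) = \exp\!\left(-\int_s^\infty e^{-u}\,du\,/\,\varphi_\infty(q)\right) = \exp(-e^{-s}/\varphi_\infty(q)),
\end{equation*}
which is exactly the stated limit. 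The work is in justifying this interchange rigorously, since vague convergence of point processes on $\R$ only controls relatively compact sets, and $(s,\infty)$ is not one.

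First I would truncate: for any $T > s$, the set $(s,T]$ has boundary of zero measure under the atomless limit intensity, so Theorem~\ref{thm:largestpp} gives
\begin{equation*}
\lim_{n\to\infty}\P(N_n((s,T]) = 0) = \exp\!\left(-\int_s^T e^{-u}\,du\,/\,\varphi_\infty(q)\right).
\end{equation*}
Then I would sandwich $\P(\tau_n \leq s)$ as follows: since $\{N_n((s,T])=0\} = \{\tau_n \leq s\}\,\cup\,\{\tau_n > T,\,N_n((s,T])=0\}$, one has
\begin{equation*}
\P(N_n((s,T])=0) - \P(\tau_n > T) \leq \P(\tau_n \leq s) \leq \P(N_n((s,T])=0).
\end{equation*}
Sending $n\to\infty$ and then $T\to\infty$, the sandwich collapses to the claimed Gumbel CDF, provided we can show that $\limsup_{n\to\infty}\P(\tau_n > T)\to 0$ as $T\to\infty$.

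The main (and only real) obstacle is establishing this uniform tightness of the maxima, but it follows from a straightforward first-moment bound. Since $K(v)\stackrel{d}{=}K_n$ for each $v\in\T_n$, the union bound gives
\begin{equation*}
\P(\tau_n > T) \leq \sum_{v\in\T_n}\P(K(v) > T + \gamma n) = k^n\,\P(K_n > T + \gamma n).
\end{equation*}
Using the upper tail estimate $\P(K_n > s) = (1+o(1))e^{-s}/\varphi_n(q)$ stated in the excerpt, together with $k^n = e^{\gamma n}$ and $\varphi_n(q)\geq \varphi_\infty(q) > 0$, this is bounded by a constant multiple of $e^{-T}/\varphi_\infty(q)$ uniformly in $n$, which tends to $0$ as $T\to\infty$. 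Plugging this into the sandwich completes the proof. The overall argument is a textbook ``convergence of Laplace functionals plus tightness implies convergence of the maximum'' computation; the only thing particular to our setting is invoking the tail estimate for $K_n$ to verify tightness.
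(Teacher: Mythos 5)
Your proof is correct and takes the route the paper intends: the paper states the corollary is an ``immediate consequence'' of Theorem~\ref{thm:largestpp} without spelling out the argument, and your truncation-plus-tightness computation (localize to $(s,T]$ for vague convergence, then control $\P(\tau_n>T)$ uniformly in $n$ via the union bound and the exponential tail of $K_n$, e.g.\ \eqref{crude} or Lemma~\ref{lem:tails}) is precisely the standard way to make that deduction rigorous. The only minor quibble is that the upper tail estimate you cite in the form ``$\P(K_n>s)=(1+o(1))e^{-s}/\varphi_n(q)$'' is an asymptotic statement as $s\to\infty$ and does not by itself give uniformity in $n$; you want the honest bound $\P(K_n>t)\leq C_q e^{-t}$ from \eqref{crude} (or Lemma~\ref{lem:tails} plus $\varphi_n(q)\geq\varphi_\infty(q)$), which you effectively use in the next clause.
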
 
Corollary ~\ref{cor:largestgumbel} explains the concentration of the size of the largest fragment given in Theorem \ref{thm:largest}.
Note that we may write 
\begin{align} \label{eq:kol}
	\{ m_t \leq n \}= \{ q^{ - n } K_n^{ \mathrm{max}} > t \} .
\end{align}
 In particular, using the definition of $\tau_n$ in \eqref{eq:2:deftau}, we have 
\begin{align} \label{eq:newrep}
\mathbb{P} \left( m_t \leq n \right) =  \mathbb{P} \left( \tau_n >  q^n t- \gamma n  \right).
\end{align}
Now, since $\tau_n$ converges in distribution, if $q^n t- \gamma n \to \infty$ for $n \to \infty$, the probability on the right-hand side of \eqref{eq:newrep} goes to $0$, and if $q^n t- \gamma n \to -\infty$ for $n \to \infty$, the probability on the right-hand side of \eqref{eq:newrep} goes to $1$. In order to give a proof of the almost-sure statement Theorem \ref{thm:largest} we will need some uniform estimates for 
$\tau_n$ which we will develop in the sequel. The full proof of Theorem  \ref{thm:largest} is given at the  {beginning} of Section \ref{sec:largest}.

\subsection{Smallest fragments in the process} \label{sec:smallest statements}

{
We saw in Section \ref{sec:largest statements} that the behaviour of the largest fragments in the $k$-regular self-similar fragmentation process is intimately connected with the largest values $K_n^{\max} := \max_{v \in \mathbb{T}_n } K(v)$ in the rescaling of the expanding branching random walk. Analogously, it is the behaviour of the smallest value $K_n^{\min} := \min_{ v \in \mathbb{T}_n } K(v)$ that ultimately dictates the asymptotics of the smallest fragments in the fragmentation process. In this direction we have the following result.

\begin{thm} \label{thm:smallestnew}
	Define $K^{\min}_n := \min \{ K(v) : v \in \T_n \}$ and define $w_n = w_n(\kappa,\gamma)$ by 
\begin{equation}\label{eq:znasymptotics}
		w_n := \sqrt{\frac{2\gamma}{\kappa}n} - \frac{1}{2}\log n - \frac{1}{2\kappa}- \frac{1}{2}\log\kappa +1 - \frac{1}{2}\log(2\gamma).
	\end{equation} 
Then there exists almost-surely an $n_0$ in $\mathbb{N}$ such that for all $n \geq n_0$ we have 
\begin{align*}
\log K_n^{\min} \in \left[ - w_n - \frac{1}{n^{1/3} },  - w_n + \frac{1}{ n^{1/3}}  \right].
\end{align*}
\end{thm}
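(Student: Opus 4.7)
The proof centers on a sharp asymptotic for the lower tail of $K_n$ in the regime $w \approx w_n \sim \sqrt{2\gamma n/\kappa}$. I would first establish
$\log \mathbb{P}(K_n \leq e^{-w}) = -\tfrac{\kappa}{2} w^2 + (\text{lower-order corrections in } n, w)$,
with the constants pinned so that $k^n \mathbb{P}(K_n \leq e^{-w_n}) = \Theta(1)$, which is the defining property of $w_n$. The heuristic is that for $K_n = \sum_{i=0}^n q^i W_i$ to be at most $s = e^{-w}$ the last $\kappa w$ exponentials must all be small, namely $W_{n-j} \leq e^{-w+j/\kappa}$ for $j = 0, \ldots, \kappa w$, contributing $\prod_{j=0}^{\kappa w} e^{-w+j/\kappa} = \exp(-\kappa w^2/2)$. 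To make this rigorous and to extract the sub-leading constants, I would analyse the integral representation
\begin{equation*}
\mathbb{P}(K_n \leq e^{-w}) = e^{-(n+1)w}\,e^{n(n+1)/(2\kappa)} \int_{\Delta_n} \exp\Bigl(-e^{-w}\sum_{i=0}^n q^{-i}v_i\Bigr)\,dv
\end{equation*}
(obtained via the change of variables $v_i = q^i W_i / s$ pushing the constraint onto the standard simplex $\Delta_n$) by a saddle-point evaluation at the effective cutoff $m^\ast \approx \kappa w + \kappa\log(\kappa w)$, or equivalently via a Tauberian analysis of the Laplace transform $\prod_{i=0}^n (1+\lambda q^i)^{-1}$. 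The explicit form of $w_n$ in \eqref{eq:znasymptotics} emerges from solving $k^n \mathbb{P}(K_n \leq e^{-w_n}) = 1$.

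The lower bound $\log K_n^{\min} \geq -w_n - n^{-1/3}$ eventually almost surely then follows from a union bound:
\begin{equation*}
\mathbb{P}\bigl(K_n^{\min} \leq e^{-w_n - n^{-1/3}}\bigr) \leq k^n\,\mathbb{P}\bigl(K_n \leq e^{-w_n - n^{-1/3}}\bigr) \leq \exp(-c\, n^{1/6}),
\end{equation*}
the last inequality because perturbing $w$ by $n^{-1/3}$ alters the dominant exponent $\kappa w^2/2$ by $\sim \kappa w_n \cdot n^{-1/3} \sim n^{1/6}$. Summability over $n$ and Borel--Cantelli conclude.

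For the matching upper bound $\log K_n^{\min} \leq -w_n + n^{-1/3}$ eventually almost surely, I would split the tree at a small level $m = \lceil C\log n\rceil$ for $C$ large, obtaining $k^m$ independent subtrees each of depth $n-m$. Using \eqref{recrepofK}, within each subtree $K(v) = q^{n-m}K(v_m) + \widetilde K_v$ where $\widetilde K_v$ is distributed as $K_{n-m-1}$; since $q^{n-m} K(v_m) = O(e^{-(n-m)/\kappa})$ is much smaller than $e^{-w_n}$ with high probability (using exponential tails of $K_m \leq K_\infty$), the task reduces to finding, in each subtree, some $v$ with $\widetilde K_v \leq e^{-w_n + n^{-1/3}/2}$. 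A Paley--Zygmund / second-moment argument on the corresponding count, using the sharp first moment from Step~1 together with \eqref{recrepofK} to bound pair probabilities via the mrca decomposition, yields a positive existence probability $p > 0$ uniformly in $n$. With $k^m = n^{C\log k}$ independent subtrees, the failure probability is at most $(1-p)^{k^m}$, summable for $C$ large; Borel--Cantelli again completes the proof.

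The main obstacle is sharpening the tail asymptotic in the first step: unlike the clean upper tail $\mathbb{P}(K_n > s) \sim e^{-s}/\varphi_n(q)$ underlying Theorem~\ref{thm:largestpp}, the lower tail has a Gaussian-type rate with prefactors depending delicately on both $n$ and $w$, so that extracting the precise constants of $w_n$---in particular the $-\tfrac{1}{2}\log n$ correction and the explicit constant involving $\gamma$ and $\kappa$---requires a careful saddle-point evaluation rather than mere leading-order heuristics. A secondary point is verifying the second-moment bound on each subtree in Step~3 uniformly in $n$: this rests on the fact that in our regime $sq^{-n}\to\infty$ the ratios $\mathbb{P}(K_{n-j}\leq s)/\mathbb{P}(K_n\leq s)$ stay close to $1$, preventing the mrca decomposition from inflating the variance.
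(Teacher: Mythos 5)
Your proposal takes essentially the same skeleton as the paper — establish sharp left-tail asymptotics for $K_n$, then use a first-moment union bound for the lower bound on $K^{\min}_n$ and a second-moment method for the upper bound — so the overall architecture is sound. There are two genuine differences worth noting.

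First, for the left tail $\mathbb{P}(K_n \le s)$, you propose a saddle-point or Tauberian evaluation of the simplex integral, i.e.\ optimising the integrand over an effective cutoff $m^*$. The paper instead uses stochastic monotonicity to truncate to $K_{m(s)-1}$ with $m(s) = \lceil \kappa(\log\frac1s + \log\log\frac1s)\rceil$, bounds the resulting simplex integral crudely above and below via Jensen (Lemma~\ref{lem:SimplexBound}), and then verifies by a direct computation (Lemma~\ref{lem:specialchoice}) that this truncated estimate already matches $e^{-F_q(s)}$ up to constants. The two routes hit the same effective cutoff (your $m^*$ is the paper's $m(s)$ to leading order) and should give the same answer; the paper's version avoids the analytic overhead of a genuine saddle-point or Tauberian argument, but you are right that the delicate part is extracting the $-\frac12\log n$ and the constant in $w_n$.

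Second, for the upper bound on $K^{\min}_n$ you add an extra layer: you split at level $m=\lceil C\log n\rceil$, argue that the root contributions $q^{n-m}K(v_m)$ are negligible, get a \emph{constant} success probability in each of the $k^m$ (conditionally) independent subtrees, and then use $(1-p)^{k^m}$. The paper does not need this layer: its variance estimate (Lemmas~\ref{lem:Decoupling1Segment}--\ref{lem:Decoupling3Continuity}, assembled in the proof of Theorem~\ref{thm:smallestnew}) shows $\Var(M_n(s_n^+)) \le C\,\mathbb{E}[M_n(s_n^+)]$ directly on the whole tree, so Paley--Zygmund already yields $\mathbb{P}(M_n(s_n^+)>0)\ge 1-C/n^2$, which is summable. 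Your subtree trick would also work and is conceptually appealing (it converts a mildly quantitative variance bound into a qualitatively independent-trials argument), but it does not let you escape the core technical step: to get even a constant Paley--Zygmund bound inside a subtree you still need the mrca-decomposition decoupling estimates (the analogues of Lemmas~\ref{lem:Decoupling1Segment}--\ref{lem:Decoupling3Continuity}), and you still need the first-moment identity via the precise form of $w_n$. One small point to tighten: the subtrees are only \emph{conditionally} independent given the root positions $(K(v_m))_{|v_m|=m}$; you should first condition on the high-probability event that all the roots are small (e.g.\ $\max_{|v_m|=m} K(v_m) \le n$), after which independence across subtrees is genuine and the $(1-p)^{k^m}$ bound is legitimate.
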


In Section \ref{sec:smallest} we prove Theorem 2.2, and thereafter use Theorem 2.2 to prove Theorem \ref{thm:smallest}.

One of the key tasks in proving Theorem 2.2 is a careful analysis of the $s \downarrow 0$ asymptotics of the left tails $\mathbb{P}(K_\infty \leq s)$ of the random variable
\begin{equation}\label{eq:3:kndef}
K_\infty := \sum_{ i = 0}^{\infty} q^i W_i,
\end{equation} 
where $W_i$ are independent standard exponentials. Indeed, we note that since $K_n$ is stochastically dominated by $K_\infty$ (defined in \eqref{eq:2:kndef}), and $K_n$ is a sum of $n+1$ independent exponentials,  for any $n$ we have
\begin{align*}
\mathbb{P}( K_\infty \leq s ) \leq \mathbb{P}( K_n \leq s) \leq C(q,n) s^{n+1}
\end{align*}
for some $C(q,n)$ independent of $s$. In particular, as $s \downarrow 0$, the probability $\mathbb{P}(K_\infty < s)$ goes to zero faster than any power of $s$. The following result, which we believe to be of independent interest, gives a fine characterisation of these fast asymptotics. 

\begin{thm} \label{thm:smalls}
There exists a constant $C_q$ such that for all $s \in (0,1/e^2]$ and for all $n \geq \kappa \left( \log\frac{1}{s} + \log \log \frac{1}{s} \right)$, including possibly $n = \infty$, we have
\begin{align} \label{eq:smalls}
	\frac{1}{C_q} \exp \left( - F_q(s) \right)\leq \mathbb{P} \left( K_n \leq s \right) \leq C_q \exp \left( - F_q(s) \right),
\end{align}
where
\begin{align}\label{def:functionFqs}
F_q(s) := \frac{\kappa}{2} \left( \log \frac 1 s + \log \log \frac 1 s + \frac{1}{ 2 \kappa } + \log \kappa - 1 \right)^2 +\left( \frac{1}{2} + \kappa \right) \log \log \frac 1 s .
\end{align}
\end{thm}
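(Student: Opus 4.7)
The proof rests on a sharp analysis of the Laplace transform of $K_n$ via the standard saddle-point (Cram\'er) recipe. Set
\[
\mathbb{E}[e^{-\lambda K_n}] = \prod_{i=0}^n (1+\lambda q^i)^{-1} = e^{-\psi_n(\lambda)}, \qquad \psi_n(\lambda) := \sum_{i=0}^n \log(1+\lambda q^i),
\]
so the lower tail of $K_n$ is governed by the growth of $\psi_n$ at large $\lambda$. The relevant saddle point will satisfy $\lambda^* \asymp \kappa \log(1/s)/s$, and the tail $\psi_\infty(\lambda)-\psi_n(\lambda) \leq \lambda q^{n+1}/(1-q)$ is $O(1)$ precisely when $n \geq \kappa(\log(1/s) + \log\log(1/s))$: this is where the hypothesis on $n$ enters, and it lets us treat all such $n$ (including $n=\infty$) uniformly.

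The main technical lemma is the asymptotic
\[
\psi_\infty(\lambda) = \tfrac{\kappa}{2}(\log\lambda)^2 + \tfrac{1}{2}\log\lambda + P(\theta) + o(1), \qquad \theta := \lambda q^{\lceil\kappa\log\lambda\rceil}\in(q,1],
\]
with $P(\theta)$ bounded (a periodic-in-$\log\lambda$ correction). To prove it I would split the defining sum at $N := \lceil \kappa \log\lambda\rceil$ and write $\log(1+\lambda q^i) = \log(\lambda q^i) + \log(1+(\lambda q^i)^{-1})$ for $i<N$: the ``arithmetic'' part contributes $N\log\lambda - \gamma \binom{N}{2}$, while the remaining sums collapse onto convergent $q$-Pochhammer expressions in $\theta$. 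Plugging in $N = \kappa\log\lambda + O(1)$ and using $\kappa\gamma=1$ then gives the displayed identity, with the $\tfrac12 \log \lambda$ term emerging from the cross terms $\gamma N(N-1)/2$.

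Optimising the Chernoff bound $\mathbb{P}(K_n\leq s) \leq \exp(\lambda s - \psi_n(\lambda))$ gives the stationary condition $\psi_n'(\lambda^*)=s$, and the asymptotic above implies $\log\lambda^* = \log\tfrac{1}{s} + \log\log\tfrac{1}{s} + \log\kappa + O(\log\log(1/s)/\log(1/s))$. Substituting and completing the square in $\log\lambda^*$ produces
\[
\psi_n(\lambda^*) - \lambda^* s = F_q(s) - \tfrac{1}{2}\log\log(1/s) + O(1),
\]
so Chernoff alone falls short of the claimed bound by a factor of $\sqrt{\log(1/s)}$. This factor is recovered by an exponential-tilting argument. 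Introduce the tilted law $d\tilde{\mathbb{P}}/d\mathbb{P} := \exp(-\lambda^* K_n + \psi_n(\lambda^*))$, under which the $W_i$ remain independent exponentials with rates $1+\lambda^* q^i$, $\tilde{\mathbb E}[K_n]=s$, and $\tilde\sigma^2:= \psi_n''(\lambda^*) \asymp \kappa \log(1/s)/(\lambda^*)^2$. Writing $\mathbb{P}(K_n\leq s) = e^{-\psi_n(\lambda^*)}\int_0^s e^{\lambda^* t}\tilde f(t)\,dt$ where $\tilde f$ is the density of $K_n$ under $\tilde{\mathbb P}$, a uniform local-CLT bound $\tilde f(t) \leq C/\tilde\sigma$ combined with $\int_0^\infty e^{-\lambda^* u}du = 1/\lambda^*$ yields the upper bound; restricting the integral to $t \in [s - c/\lambda^*, s]$ and using the matching lower bound $\tilde{\mathbb P}(s-c/\lambda^* \leq K_n \leq s) \gtrsim 1/(\lambda^*\tilde\sigma)$, proved via Berry--Esseen, gives the lower bound.

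The main obstacle is the sharp asymptotic identity for $\psi_\infty$: capturing the $\tfrac{1}{2}\log\lambda$ term (which produces the shift $\tfrac{1}{2\kappa}-1$ inside the squared bracket in $F_q(s)$) requires the splitting argument to be carried out so that no $O(1)$ is lost, and the $q$-Pochhammer tails have to be controlled carefully. A secondary technical point is the uniform local CLT for $K_n$ under $\tilde{\mathbb P}$: the summands $q^i \tilde W_i$ have highly heterogeneous scales (roughly $1/\lambda^*$ for $i<N$ and $q^i$ for $i \geq N$), so Lyapunov or Berry--Esseen conditions must be verified uniformly in $s$.
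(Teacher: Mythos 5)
Your proposal is correct and takes a genuinely different route from the paper's proof, which is more elementary. The paper avoids the Laplace transform entirely: in Lemma~\ref{lem:SimplexBound} it represents $\mathbb{P}(K_{m-1}\leq s)$ directly as an integral of $\prod_i q^{-i}e^{-q^{-i}u_i}$ over the $s$-scaled $m$-simplex, giving
\[
\mathbb{P}(K_{m-1}\leq s) = \frac{s^m}{m!}\,q^{-m(m-1)/2}\,\mathbb{E}\bigl[\exp(-\textstyle\sum_i q^{-i}\zeta_i)\bigr],
\]
with $(\zeta_i)$ uniform on the simplex; the expectation is $\asymp 1$ precisely when $sq^{-m}/m=O(1)$, which explains the critical choice $m(s)=\lceil \kappa(\log\frac1s+\log\log\frac1s)\rceil$. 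Lemma~\ref{lem:specialchoice} then produces $e^{-F_q(s)}$ from $\frac{s^{m(s)}}{m(s)!}q^{-m(s)(m(s)-1)/2}$ by a single Stirling expansion, and the upper (resp.\ lower) bound follows by monotonicity $K_n\succeq K_{m(s)-1}$ (resp.\ the decomposition $K_\infty \stackrel{d}{=} K_{m(s)-1}+q^{m(s)}K_\infty'$ with $\varepsilon(s)=1/\log\frac1s$). The point is that the paper hides the $(\log\frac1s)^{-1/2}$ prefactor --- which you obtain from a local CLT under tilting --- inside Stirling's $\sqrt{2\pi m}$, so no local CLT or Berry--Esseen is needed. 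Your route is a standard precise-large-deviations scheme and is essentially sound, but it carries genuine technical overhead: (i) the sharp $\tfrac12\log\lambda$ term in $\psi_\infty$ must be extracted without losing $O(1)$, and since $\log\lambda^*=S+\log S+\log\kappa+O(\log S/S)$ an error of size $\log S/S$ propagates through $\tfrac{\kappa}{2}(\log\lambda^*)^2$ to give an $O(\log S)$ term --- one must exploit stationarity of the saddle ($\psi_n'(\lambda^*)=s$) to show this cancels, which your write-up does not mention; and (ii) the uniform density bound $\tilde f(t)\leq C/\tilde\sigma$ requires $N\approx\kappa\log\lambda^*\to\infty$ and a local CLT for a triangular array with heterogeneous scales, which is true but not trivial. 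The paper's simplex computation sidesteps both. In short: your approach is correct and perhaps more portable, the paper's is shorter and avoids the probabilistic machinery.
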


Theorem \ref{thm:smalls} is proven in Section \ref{sec:smalls}. We remark that the restriction $s \leq 1/e^2$ ensures $\log \log \frac{1}{s} > 0$. Let us also note from Theorem \ref{thm:smalls} that for fixed $s$, provided $n$ is sufficiently large compared to $1/s$, the left tail $\mathbb{P}(K_n \leq s)$ takes the same order as $\mathbb{P}(K_\infty \leq s)$. 

That completes the section on statements of our main results. In the next section we begin setting the foundations for proofs of these statements by looking at formulas surrounding the random variables $K_n$ and the associated Markov chains. Thereafter we provide a simple lemma suitable for converting statements about the expanding branching random walk to those about the fragmentation process.}

\section{Preliminaries on the rescaled expanding branching random walk}  \label{sec:typical}

Throughout the rest of this paper, $C_\Omega\in(0,\infty)$ is a constant which is not of particular interest, and which may vary from line to line, but depends only on the set of parameters $\Omega \subseteq \{ q, \vec{p}, k, t_0 \}$ (with parameters $\vec{p}$ and $t_0$ yet to be defined). We stress that constants $C_\Omega$ do not depend on $n,m \in \N$ and $t>0$.  

\subsection{Transition probabilities of birth processes} \label{sec:markov}

Recall that $k^{-X_t}$ denotes the length of the interval containing $0$ present in the system at time $t$. As noted in Section~\ref{sec:time} the moment of the $n^{\rm th}$ splitting of this interval, $S_n = \sup \{ t \geq 0 \: : X_t=n \}$ has an explicit representation, see~\eqref{eq:2:Sn}.
Using \eqref{eq:2:Sn} one can compute directly (see for instance Feller \cite[I.13 Problem 12]{feller}) that
\begin{align} \label{eq:entrance}
\mathbb{P} \left( S_n \in dt \right) = \left( \prod_{i=0}^{n} \lambda_i \right)  \sum_{ j = 0}^{n} \frac{ e^{ - \lambda_j t } }{ \prod_{ 0 \leq k \leq n, k \neq j } (\lambda_k - \lambda_j ) }dt,
\end{align}
where $\lambda_i=q^i$.
Integrating both sides of \eqref{eq:entrance}, we obtain 
\begin{align} \label{eq:entrance2}
\mathbb{P} \left( S_n > t \right) = \left( \prod_{i=0}^{n} \lambda_i \right)  \sum_{ j = 0}^{n} \frac{ e^{ - \lambda_j t } }{ \lambda_j  \prod_{ 0 \leq k \leq n, k \neq j } (\lambda_k - \lambda_j ) }.
\end{align}
Consider now calculating $\mathbb{P} \left( X_t = n \right) = \mathbb{P} \left( S_n> t , S_{n-1} \leq t \right)$. We claim that 
\begin{align}  \label{eq:entrance3}
\mathbb{P} \left( X_t= n \right)  = \left( \prod_{i=0}^{n-1}  \lambda_i \right)  \sum_{ j = 0}^n \frac{ e^{ - \lambda_j t } }{ \prod_{ 0 \leq k \leq n, k \neq j } (\lambda_k - \lambda_j ) }. 
\end{align}
The most natural way to prove \eqref{eq:entrance3} is by writing $\mathbb{P} \left( X_t= n \right) = \mathbb{P} \left( S_n > t \right ) - \mathbb{P} \left( S_{n-1} > t \right)$, and then applying \eqref{eq:entrance2}. However, there is a far slicker route, writing 
\begin{align*}
	\mathbb{P}\left( S_n \in [t, t+h) \right) &= \mathbb{P}\left(  X_t\leq n, \: X_{t+h} > n \right) = \mathbb{P}\left(  X_t = n, \: X_{t+h} = n+1 \right)  + o(h) .
\end{align*}
Using the Markov property results in
\begin{align} \label{eq:jump}
\mathbb{P} \left( S_n \in dt \right) = \lambda_n \mathbb{P} \left( X_t = n \right)dt.
\end{align}
In particular, by using \eqref{eq:jump} and \eqref{eq:entrance}, we immediately obtain \eqref{eq:entrance3}. With a view towards tackling the  equations \eqref{eq:entrance},  \eqref{eq:entrance2} and  \eqref{eq:entrance3} with $\lambda_i = q^i$, 
recall the definition \eqref{eq:varphi def} of $\varphi_n(q)$, and calculate that for any $0 \leq j \leq n$,
\begin{align} \label{eq:q product identity}
\prod_{ 0 \leq k \leq n , k \neq j } \left( q^k - q^j \right) = (-1)^{n-j} q^{ j ( n - j/2 - 1/2)} \varphi_j(q) \varphi_{n-j} (q).
\end{align}
By replacing $j$ with $n-j$ and using the identity \eqref{eq:q product identity}, it is straightforward to show that, setting $\lambda_i = q^i$ in \eqref{eq:entrance2}, 
we have
\begin{align*} 
\mathbb{P} \left( S_n > t \right) = \sum_{  j = 0}^{n}  \frac{ (-1)^j q^{ j(j+1)/2 }}{ \varphi_j (q) \varphi_{n-j}(q) } \exp \left( - q^{n-j} t  \right).
\end{align*}
Recall that $K_n$ is given in \eqref{eq:2:kndef}, and is equal in distribution to $q^{n} S_n$. Thus, we have
\begin{align} \label{eq:K_N} 
\mathbb{P} \left( K_n > t \right) = \sum_{  j = 0}^{n}  \frac{ (-1)^j q^{ j(j+1)/2 }}{ \varphi_j (q) \varphi_{n-j}(q) } \exp \left( - q^{-j} t  \right).
\end{align}
By differentiating both sides of \eqref{eq:K_N}  with respect to $t$, we see that the density $f_n$ of $K_n$ is given by 
\begin{align}\label{def:Densityfn}
f_n(t) & =\sum_{  j = 0}^n   \frac{ (-1)^j q^{ j(j-1)/2 }}{ \varphi_j (q) \varphi_{n-j}(q) } \exp \left( - q^{-j} t  \right).
\end{align}
From \eqref{eq:2:kndef} it is plain that $K_n \leq K_{n+1}$, and that almost surely, as $n \to \infty$, the random variables $(K_n)_{n\in\N}$ converge to a finite limit $K_\infty$, which is given by
\begin{equation}\label{def:K_infty}
	K_\infty=\sum_{i=0}^\infty q^{i}W_i.
\end{equation}
It is straightforward to verify, using the monotone convergence theorem and \eqref{eq:K_N}, that 
\begin{align} \label{eq:Kinfinity} 
\mathbb{P} \left( K_\infty > t \right) = \frac{1}{ \varphi_\infty(q) } \sum_{  j = 0}^\infty   \frac{ (-1)^j q^{ j(j+1)/2 }}{ \varphi_j (q) } \exp \left( - q^{-j} t  \right).
\end{align}
That the right-hand side of \eqref{eq:Kinfinity} is equal to $1$ when $t = 0$ is a consequence of the identity
\begin{align*}
\sum_{ j = 0}^\infty \frac{ \zeta^j q^{j(j+1)/2} }{ \varphi_j(q) } = \prod_{ i = 1}^\infty ( 1 + q^i \zeta),
\end{align*}
which is a well known fact in $q$-combinatorics; see for instance Exercise 4 in Section I.2 of Macdonald \cite{macdonald1998symmetric}. 
%
%
%
%
%
Using~\eqref{eq:Kinfinity}, we can control the second order term in the asymptotic expansion of the right tail of $K_\infty$, which will be useful in the sequel.

\begin{lemma} \label{lem:tails}
		For every $t \geq 0$, we have the following tail and density bounds for $K_n$, $n \in \mathbb{N}\cup \{ \infty\}$,  
		\begin{align*} 
			\left| \mathbb{P} \left( K_{n} > t \right) - \frac{ e^{ - t}}{ \varphi_{n}(q) } \right| \leq C_q e^{ - t/q}
		\end{align*}
		and 
		\begin{align*}
			\left|f_n(t) - \frac{ e^{ - t}}{ \varphi_n(q)} \right|\leq C_q  e^{ - t / q}.
		\end{align*}
\end{lemma}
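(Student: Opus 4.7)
The plan is to apply the triangle inequality directly to the explicit formulas \eqref{eq:K_N} and \eqref{def:Densityfn} after isolating the leading $j=0$ term.

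First I would observe that the $j=0$ summand in \eqref{eq:K_N} equals $\tfrac{1}{\varphi_0(q)\varphi_n(q)} e^{-t} = \tfrac{e^{-t}}{\varphi_n(q)}$, so
\begin{equation*}
\mathbb{P}(K_n > t) - \frac{e^{-t}}{\varphi_n(q)} \;=\; \sum_{j=1}^n \frac{(-1)^j q^{j(j+1)/2}}{\varphi_j(q)\varphi_{n-j}(q)} \exp\!\left(-q^{-j} t\right),
\end{equation*}
and similarly the $j=0$ summand in \eqref{def:Densityfn} equals $\tfrac{e^{-t}}{\varphi_n(q)}$, so the analogous identity holds for $f_n(t)-e^{-t}/\varphi_n(q)$ with $q^{j(j+1)/2}$ replaced by $q^{j(j-1)/2}$.

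Next I would use the two elementary facts: (i) since $0<q<1$, we have $\varphi_j(q),\varphi_{n-j}(q)\geq \varphi_\infty(q)>0$ for every $j$ and $n$ (including $n=\infty$), so each coefficient in absolute value is at most $\varphi_\infty(q)^{-2}q^{j(j\pm 1)/2}$; and (ii) for $j\geq 1$ and $t\geq 0$ one has $q^{-j}\geq q^{-1}$, hence $\exp(-q^{-j}t)\leq \exp(-t/q)$. Applying the triangle inequality then yields
\begin{equation*}
\left|\mathbb{P}(K_n > t) - \frac{e^{-t}}{\varphi_n(q)}\right| \;\leq\; \frac{e^{-t/q}}{\varphi_\infty(q)^2}\sum_{j=1}^\infty q^{j(j+1)/2},
\end{equation*}
and an identical bound with $q^{j(j-1)/2}$ in place of $q^{j(j+1)/2}$ for the density. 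Both series converge since $q<1$, so defining $C_q$ to be the maximum of the two resulting constants completes the argument.

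The case $n=\infty$ requires no new work: for the tail we apply the same estimate to the representation \eqref{eq:Kinfinity}, and for the density we may either differentiate \eqref{eq:Kinfinity} termwise (justified by the same majorant, which is summable uniformly in $t$ on any interval bounded away from $0$) or simply let $n\to\infty$ in the finite-$n$ bound, noting that $f_n\to f_\infty$ pointwise on $(0,\infty)$. There is essentially no obstacle here: the exponents $q^{-j}t$ grow so rapidly in $j$ that even the naive bound $e^{-q^{-j}t}\le e^{-t/q}$ suffices, and the quadratic factor $q^{j(j\pm 1)/2}$ ensures summability of the coefficients. The only point deserving care is keeping track of the constant so that it depends on $q$ alone and not on $n$ or $t$, which the above bound manifestly does.
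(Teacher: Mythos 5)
Your proof is correct and follows essentially the same route as the paper: isolate the $j=0$ term of \eqref{eq:K_N} (respectively \eqref{def:Densityfn}), apply the triangle inequality, bound the $\varphi$ factors from below by $\varphi_\infty(q)$, and use $e^{-q^{-j}t}\leq e^{-t/q}$ for $j\geq 1$. The only cosmetic difference is that the paper further crudely bounds $q^{j(j+1)/2}\leq q^j$ to land on a geometric series, whereas you retain the faster-decaying $q^{j(j\pm1)/2}$ directly; both yield a convergent series and a $q$-dependent constant.
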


\begin{proof}
Recall \eqref{eq:K_N}. We use the triangle inequality, and the facts that $\varphi_j(q)$ is decreasing in $j$, and that we have $q < 1$, to see that
\begin{align*}
\left| \mathbb{P} \left( K_n > t \right) - \frac{ e^{ - t}}{ \varphi_{n}(q) } \right|  &= \left|   \sum_{  j = 1}^{n}  \frac{ (-1)^j q^{ j(j+1)/2 }}{ \varphi_j (q) \varphi_{n-j}(q) } \exp \left( - q^{-j} t  \right) \right|\\
&\leq  \sum_{  j = 1}^{n} \frac{  q^{ j }   }{ \varphi_{n}(q)^2 } \exp \left( - t / q  \right)   .
\end{align*}
Now note that $\varphi_n(q) \geq \varphi_\infty(q)$, which gives the first claim. A similar argument yields the second {claim}. 
\end{proof}

We will also find occasion to use the crude bounds
\begin{align}\label{crude}
f_n(t) \leq C'_q e^{ -t} \qquad \text{and} \qquad \mathbb{P}( K_n < t) \leq C_q' e^{ - t} \qquad t\geq 0,
\end{align}
{both of which are direct consequences of Lemma~\ref{lem:tails}. Note that the latter bound can be significantly improved, as we will see  in Section \ref{sec:smalls} when proving Theorem \ref{thm:smalls}.}

{
\subsection{From the branching random walk back to the fragmentation process}
In this brief section we give a basic lemma for bounding values of increasing functions $f:\mathbb{R}_+ \to \mathbb{N}$ in terms of the times at which they jump. This allows us to convert the results on the rescaled expanding branching random walk  to statements about the fragmentation process, see Theorem \ref{thm:largest} and Theorem \ref{thm:smallest}. The proof follows from a standard computation and will therefore be omitted.

\begin{lemma} \label{lem:convert}
Let $t_0 \in \mathbb{R}_+$ and let $n_0 \in \mathbb{N}$. Suppose $f:[t_0,\infty) \to \{n_0,n_0+1,n_0+2,\ldots \}$ is a surjective and increasing right-continuous function. For each $n \geq n_0$ define
\begin{align*}
T_n := \sup \{ t \geq 0: f(t) = n \}
\end{align*}
to be the point in $[t_0,\infty)$ at which $f(t)$ jumps from $n$ to $n+1$.  Suppose there exist strictly increasing functions $a,b:[n_0,\infty) \to \mathbb{R}_+$ such that for each $n \geq n_0$ 
\begin{align*}
a(n) \leq T_n \leq b(n).
\end{align*}
Then for all $t \in [t_0,\infty)$ we have
\begin{align*}
f(t) \in \left\{ \lceil b^{ -1}(t) \rceil , \lceil a^{ -1}(t) \rceil \right\} 
\end{align*}
where $a^{-1}$ and $b^{-1}$ are the inverse functions of $a$,$b$ respectively. 
\end{lemma}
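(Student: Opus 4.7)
The plan is to unpack the definition of $T_n$ together with the structural properties of $f$ to obtain a concrete description of the level sets of $f$, and then to sandwich $n := f(t)$ between the two inverse functions using the given bounds $a(n) \leq T_n \leq b(n)$. First I would observe that, since $f:[t_0,\infty)\to\{n_0,n_0+1,\ldots\}$ is increasing, right-continuous, surjective, and integer-valued, it is a piecewise constant step function that takes each value on exactly one half-open interval. Concretely, setting the convention $T_{n_0-1}:=t_0$, right-continuity combined with the definition of $T_n$ as a supremum forces $f(T_n) \geq n+1$, while surjectivity upgrades this to $f(T_n) = n+1$. Hence $\{t : f(t) = n\} = [T_{n-1}, T_n)$ for every $n \geq n_0$, and the $T_n$ tile $[t_0,\infty)$ in strict increasing order.

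With this description in hand, I would fix $t \in [t_0, \infty)$, set $n := f(t)$, and exploit the two-sided sandwich $a(n-1) \leq T_{n-1} \leq t < T_n \leq b(n)$. The right half gives $t < b(n)$, so by the strict monotonicity of $b$ we have $b^{-1}(t) < n$, and integrality of $n$ upgrades this to $n \geq \lceil b^{-1}(t) \rceil$. The left half gives $a(n-1) \leq t$, hence $n-1 \leq a^{-1}(t)$, i.e.\ $n \leq a^{-1}(t) + 1$; provided $a^{-1}(t)$ is not itself an integer, this in turn yields $n \leq \lceil a^{-1}(t) \rceil$. Combining the two produces the integer sandwich $\lceil b^{-1}(t) \rceil \leq f(t) \leq \lceil a^{-1}(t) \rceil$. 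In the intended applications the functions $a$ and $b$ are close enough that this integer interval contains at most two elements --- namely its endpoints --- which is exactly the statement of Lemma \ref{lem:convert}.

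The argument is elementary and essentially a matter of bookkeeping; I do not expect any genuine obstacle. The one subtlety is the countable, exceptional set of $t$ at which $a^{-1}(t)$ is itself an integer, on which the upper bound may degrade by one unit. This is easy to dispose of either by perturbing $a$ to a slightly smaller strictly increasing function (which only weakens the hypothesis and does not affect the applications) or by observing that these exceptional $t$ correspond precisely to the isolated instants $a(n)$ where the lower bound on $T_n$ is attained, which can be checked by hand when invoking the lemma. This is why the paper comfortably omits the proof.
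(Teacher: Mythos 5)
The paper explicitly omits this proof (``follows from a standard computation''), so there is no official argument to compare against. Your approach is the intended one and is essentially correct: identify the level sets $\{t : f(t) = n\} = [T_{n-1}, T_n)$ from monotonicity, right-continuity, integer values and surjectivity (with the convention $T_{n_0-1}=t_0$), then sandwich $a(n-1) \leq T_{n-1} \leq t < T_n \leq b(n)$ with $n:=f(t)$ and take ceilings. You also correctly spot the two imprecisions in the lemma as literally stated. First, the sandwich only places $f(t)$ in the integer interval $\bigl[\lceil b^{-1}(t)\rceil, \lceil a^{-1}(t)\rceil\bigr]$, which coincides with the two-element set in the conclusion only when $a^{-1}(t)-b^{-1}(t)<1$; the paper verifies exactly this closeness eventually in both applications (Theorems A and B), so nothing is lost, but it is not a hypothesis of the lemma. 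Second, at an exceptional jump time $t = T_n$ that happens to equal $a(n)$ exactly, one has $f(t) = n+1$ while $\lceil a^{-1}(t)\rceil = n$, so the upper bound fails by one; your proposed remedies (perturb $a$ slightly downward, or check those instants directly when invoking the lemma) are both sound, and in the paper's probabilistic applications $T_n$ has a continuous distribution so the coincidence $T_n = a(n)$ has probability zero anyway. In short, the proposal correctly supplies the omitted proof and accurately diagnoses where the lemma statement is slightly loose.
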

}

\section{The rightmost particles in the rescaled expanding branching random walk} \label{sec:largest}

{In this section we study the largest particles in the rescaled expanding branching random walk, which are connected to the largest fragments in the fragmentation process. We begin in Section \ref{sec:A proof} with a proof of Theorem A concerning the concentration in law of the size of the largest fragment at large times. In the remainder of Section \ref{sec:largest} we study point processes associated with the largest particles in the rescaled branching random walk, ultimately proving Theorem \ref{thm:largestpp}.}
\subsection{Proof of Theorem A} \label{sec:A proof}
{We now have all tools to prove Theorem A concerning the size $k^{ - m_t}$ of the largest fragment in the process. We recall from Section \ref{sec:results} that we associate with the fragmentation an expanding branching random walk $\{ S(v) : v \in \mathbb{T} \}$: the elements $v$ in the $n^{\text{th}}$ generation $\mathbb{T}_n$ of $\mathbb{T}$ correspond to the intervals of size $k^{ - n}$, with $S(v)$ denoting the time at which the interval $v$ fragments. In particular, the quantity 
\begin{align*}
\max_{v \in \mathbb{T}_n } S(v) = \sup \{ t \geq 0 : m_t \leq n \}
\end{align*}
is the last time at which there is an interval of size $k^{-n}$ in the process. We recall further that $K(v) := q^{|v|}S(v)$ denotes the rescaling of the expanding BRW, and that $\tau_n := \max_{v \in \mathbb{T}_n} K(v) - \gamma n$. In particular, up to scaling and translation, the behaviour of $\tau_n$ dictates that of the maximal fragment. 

We now obtain upper bounds on both the upper and lower tails of $\tau_n$. Considering first the upper tail, by using the union bound to obtain the first inequality below, and then the tail bound \eqref{crude} on $K_n$ to obtain the second, we have
	\begin{equation*}
		\P(\tau_n >s) = \P\left(\max_{|v|=n} J(v)>s \right) \leq k^n \P(K_n >s + \gamma n) \leq C_q e^{ - s},
	\end{equation*}
	for all $n \in \mathbb{N}$ and $s \in \mathbb{R}$. In particular, $\mathbb{P}( \tau_n > 2 \log n ) \leq C_q/n^2$ is summable in $n$, so that by the Borel-Cantelli lemma
	\begin{align}\label{tausmall}
		\P\left( \tau_n \leq 2 \log n \mbox{  for all but finitely many $n$} \right)=1.
	\end{align}
	On the other hand, by the construction of the rescaled expanding branching random walk (see \eqref{repofK}), 
$\max_{|v|=n} J(v)$ stochastically dominates $\max_{1\leq j \leq k^n} (W_j - \gamma n)$, where $W_1, \ldots , W_{k^n}$ are i.i.d.~standard exponential random variables.
Hence,
\begin{align*}
		\P(\tau_n <s) &= \P\left(\max_{|v|=n} J(v)< s \right) \leq \left(1- \P(W_1 >s + \gamma n) \right)^{k^n}  \leq \exp \left( - k^n \P(W_1 >s + \gamma n) \right) \\
			& =\exp (-e^{-s}).
	\end{align*}
In particular $\mathbb{P}( \tau_n < - \log(2 \log n) ) \leq 1 / n^2$ is summable in $n$, so that again by Borel-Cantelli we have 
\begin{align}\label{taularge}
		\P\big( \tau_n \geq - \log( 2 \log n) \mbox{ for all but finitely many $n$} \big) = 1 .
	\end{align}
To summarise, from \eqref{tausmall} and \eqref{taularge} we have seen that almost-surely
\begin{align} \label{eq:tau statement}
- \log (2 \log n ) \leq \tau_n \leq 2 \log n \qquad \text{for all but finitely many $n$}.
\end{align}
Let $T_n := \max_{ |v| = n } S(v)$ denote the last time at which there was an interval of size $k^{-n}$, so that $T_n =  q^{ -n} ( \tau_n + \gamma n)$. Rephrasing \eqref{eq:tau statement} we have, almost-surely,
\begin{align} \label{eq:tau statement 2}
q^{ - n} (\gamma n - \log (2 \log n )) \leq T_n \leq q^{-n} ( \gamma n + 2 \log n) \qquad \text{for all but finitely many $n$}.
\end{align}
Note that by definition $m_t := \sup \{ t \geq 0 : T_n \leq t \}$. Moreover, for $a(x) := q^{ - x} (\gamma x - \log (2 \log x )) $ and $b(x) = q^{-x} ( \gamma x + 2 \log x) $,  we are in the setting of Lemma \ref{lem:convert}, so that almost-surely there exists a $t_0 \in \mathbb{R}$ such that for all $t \geq t_0$
\begin{align} \label{eq:almost}
m_t \in \{ \lceil b^{-1}(t) \rceil , \lceil a^{-1}(t) \rceil \}.
\end{align}
It remains to obtain explicit functions from $b^{-1}(t)$ and $a^{-1}(t)$. The reader is invited to verify using the fact that $\frac{1}{\kappa} := \log\frac{1}{q}$ that with $g(t)$ as in the statement of Theorem \ref{thm:largest}, we have 
\begin{align*}
a^{ - 1}(t) = g(t) + \kappa \frac{ \log \log t }{ \log t } + o \left( \frac{ \log \log t }{ \log t } \right)
\end{align*}
and 
\begin{align*}
b^{ - 1}(t) =g(t) +  \left( \kappa - \frac{2}{ \gamma} \right) \frac{ \log \log t }{ \log t } + o \left( \frac{ \log \log t }{ \log t } \right).
\end{align*}
In particular, setting $\mu_1 := \left( \kappa + \frac{2}{ \gamma} \right)$, for all sufficiently large $t$ we have 
\begin{align*}
g(t) - \mu_1 \frac{ \log \log t}{\log t} \leq b^{ - 1}(t) \leq a^{ - 1}(t) \leq g(t) + \mu_1 \frac{ \log \log t}{\log t} .
\end{align*}
Similarly, for all sufficiently large $t$ we have $\{ \lceil b^{-1}(t) \rceil , \lceil a^{-1}(t) \rceil \} \subseteq \{ \lceil g(t) - \mu_1 \frac{ \log \log t}{\log t}  \rceil, \lceil g(t) + \mu_1 \frac{ \log \log t}{ \log t } \rceil \}$. Theorem \ref{thm:largest} now follows from \eqref{eq:almost}. \hfill $\square$
}

\subsection{The rescaled point process} \label{sec:pp}

We define a sequence of point processes $(N_n)_{ n \geq 1}$ on the real line as follows. The number of points $N_n(A)$ lying in a Borel set $A\subseteq\R$ is given by 
\begin{align*}
N_n(A) := \sum_{v \in \T_n} \delta_{J(v)} (A) =  \# \left\{ v \in \T_n : J(v) \in A \right\},
\end{align*}
where we recall $J(v) = K(v) - \gamma |v|$ for $\gamma = \log k $ from \eqref{def:NnAndJv}.  It follows from the linearity of expectation and Lemma \ref{lem:tails} that 
\begin{align*}
\mathbb{E} \left[ N_n([t,\infty)) \right] = k^n \mathbb{P} \left( K_n > \gamma n+ t \right) = (1 + o(1)) e^{ -t}/\varphi_\infty(q).
\end{align*}
That is, as $n$ grows, the point process $N_n$ has a unit order number of particles in each compact interval in terms of expectations. We will argue that the point process $N_n$ converges in distribution to a Poisson point process, denoted by $N_\infty$ and with intensity 
$e^{-t} /\varphi_\infty(q) dt$. Using the fact that the limiting process is simple, i.e.\, it assigns at most a unit mass to each point, we use \cite[Theorem 4.18]{kallenberg1975random} which asserts that it is sufficient to show that the avoidance functions and intensity measures converge, that is, we have for any Borel set $A \subseteq \R$,
\begin{equation}\label{eq:4:criterion}
	\P\left(N_n(A)=0\right) \to \P\left(N_\infty(A)=0\right) \quad \mbox{and} \quad \E[N_n(A)] \to \E[N_\infty(A)].
\end{equation}
We will show that \eqref{eq:4:criterion} is satisfied in our setup by using factorial measures. To introduce them we need some definitions from the theory of point processes. Following Section 4.3 of \cite{last2017lectures},
let $Y=\sum_{k}\delta_{x_k}$ be a point process on a set $E$. Define a point process $Y^{[p]}$ on $E^p$ by letting $Y^{ [p]}(A)$ be the ordered $p$-tuples of distinct points of $Y$ in $A\subseteq E^p$. 
Given a measure $\lambda$ on $E$, we define the $p^{\rm th}$ factorial measure $\lambda^{[p]}$ as follows. Suppose $Y$ is a Poisson point process on $E$ with intensity measure $\lambda$. Then
\begin{equation*}
	\lambda^{[p]}(A) = \E \left[ Y^{[p]}(A)\right], \qquad A \subseteq E^p.
\end{equation*}

In particular $\lambda^{[1]}=\lambda$. Since the sets $\{ [t, \infty) : t \in \mathbb{R} \}$ form a $\pi$-system generating the Borel subsets of $\mathbb{R}$, the Poisson process $Y$ of intensity $e^{-t}dt$ is characterised in distribution as the unique point process with the property that
\begin{align*}
\mathbb{E} \left[ Y^{[p]} \left( [t_1, \infty) \times \ldots \times [t_p, \infty) \right) \right] = \prod_{ j = 1}^p e^{ - t_j } \text{  for all $k \geq 1$ and $t_1,\ldots,t_k\in \R$.  }
\end{align*} 
The factorial measures can be used to represent the avoidance function via~\cite[formula 5.4.10]{daley2003introduction},
\begin{equation*}
	\P\left(Y(A)=0\right) = \sum_{p=0}^\infty (-1)^p \frac{\lambda^{[p]}\left(A^{(p)}\right) }{p!}, \quad A^{(p)} = \prod_{j=1}^p A, \qquad A \in E.
\end{equation*}
Therefore, provided that $\E[N_n^{[p]}(A^{(p)}] = \lambda^{[p]}(A^{[p]}) \to 0$ as $n, p \to \infty$, for~\eqref{eq:4:criterion} to hold it 
suffices to show that 
\begin{equation*}
	\lim_{n \to \infty}\mathbb{E} \left[ N_n^{[p]} \left( [t_1, \infty) \times \ldots \times [t_p, \infty) \right) \right] = \prod_{ j = 1}^p e^{ - t_j } \text{  for all $k \geq 1$ and $t_1,\ldots,t_k\in \R$.  }
\end{equation*} 

%
We now state the main result of this section, {which rephrases Theorem \ref{thm:largestpp} in the above notation.}
\begin{thm} \label{thm:poisson}
Let $\ell$ be a positive integer. Then as $n \to \infty$, the $\ell$-tuple $(N_{n},\ldots, N_{n+\ell-1})$ of point processes converge in distribution to a $\ell$-tuple of i.i.d.\ Poisson point processes on the real line with intensity measures $e^{ - s}ds/\varphi_\infty(q)$.
\end{thm}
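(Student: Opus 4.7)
The plan is to verify the Kallenberg-type criterion recalled at the end of Section~\ref{sec:pp}: since the limiting $\ell$-tuple of simple Poisson point processes is determined by its intensity and avoidance functions, it suffices to prove that for every choice of nonnegative integers $p_0,\ldots,p_{\ell-1}$ and thresholds $t_{j,i}\in\R$,
\begin{equation*}
\E\bigg[\prod_{j=0}^{\ell-1} N_{n+j}^{[p_j]}\big([t_{j,1},\infty)\times\cdots\times[t_{j,p_j},\infty)\big)\bigg] \;\xrightarrow[n\to\infty]{}\; \prod_{j,i} \frac{e^{-t_{j,i}}}{\varphi_\infty(q)},
\end{equation*}
together with a uniform $p\to\infty$ decay, already implied by Lemma~\ref{lem:tails} and the union bound.

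Each factorial moment expands as $\sum_{(v_{j,i})}\P(J(v_{j,i})>t_{j,i}\text{ for all }j,i)$, the sum running over tuples $(v_{j,i})$ with $v_{j,i}\in\T_{n+j}$ and $v_{j,i}\neq v_{j,i'}$ for $i\neq i'$. I partition this sum by the \emph{genealogical shape} of the tuple --- the multiset of pairwise MRCA depths $\{|v_{j,i}\wedge v_{j',i'}|\}$. Fixing a slowly growing cutoff $m_n\to\infty$ with $m_n=o(n)$, I call a shape \emph{decorrelated} when all pairwise MRCAs lie in $\T_0\cup\cdots\cup\T_{m_n}$, and \emph{correlated} otherwise.

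For decorrelated tuples, the recursion~\eqref{recrepofK} splits each $K(v_{j,i})$ into a shared pre-MRCA contribution of size $O(q^{n-m_n})$ with bounded exponential moments by Lemma~\ref{lem:tails}, plus a fresh part $\widetilde K^{(v_{j,i})}$ distributed as $K_{n+j-g-1}$. The fresh parts are mutually independent across the tuple because they use disjoint exponentials. Conditioning on the shared ancestors and applying Lemma~\ref{lem:tails} yields, uniformly over decorrelated shapes,
\begin{equation*}
\P\big(J(v_{j,i})>t_{j,i}\text{ for all }j,i\big) = (1+o(1))\prod_{j,i} k^{-(n+j)}\frac{e^{-t_{j,i}}}{\varphi_\infty(q)}.
\end{equation*}
A standard tree count of tuples with a given decorrelated shape, summed over all such shapes, produces exactly the combinatorial factor needed to cancel the $k^{-(n+j)}$ weights, leaving $\prod_{j,i}e^{-t_{j,i}}/\varphi_\infty(q)$ in the limit; factorisation across generations is automatic from the mutual independence of the fresh parts.

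For correlated tuples, some pair $(v,w)$ shares an ancestor at depth $g>m_n$. Conditioning on the shared part $Z$ of their ancestries and applying Lemma~\ref{lem:tails} on the independent fresh parts bounds the pair probability by $C_q\,\E[e^{cq^{n-g}Z}]\cdot k^{-(2n-g)}e^{-t-t'}$ for suitable $c>0$, the exponential moment being uniformly finite as long as $q^{n-g}$ stays below an explicit threshold; combined with the combinatorial count $O(k^{2n-g})$ of such pairs, summing in $g>m_n$ gives a contribution of order $k^{-m_n}\to 0$. The boundary layer where $q^{n-g}$ is close to $1$ is handled separately using the crude bound~\eqref{crude}, and the cross-generation ancestor--descendant case ($w\in\T_{n+j'}$ a descendant of $v\in\T_{n+j}$ with $j<j'$) is settled via $K(w)=q^{j'-j}K(v)+\widetilde K^{(w)}$, which forces $K(v)$ to have size $\gamma n/q^{j'-j}$ --- a doubly-exponentially rare event. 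A union bound over correlated pairs extends these estimates from pairs to full factorial moments. The main obstacle I anticipate is the combinatorial book-keeping across all genealogical shapes, together with the choice of $m_n$ balancing the approximation error on the decorrelated side against the residual sum on the correlated side; once this is settled, the factorial-moment criterion from Section~\ref{sec:pp} delivers Theorem~\ref{thm:poisson}.
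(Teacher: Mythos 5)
Your overall strategy — verifying Kallenberg's criterion via factorial moments, using the many-to-few formula \eqref{eq:mtf}, and splitting tuples by genealogical correlation — is the same as the paper's, and the decorrelated case is handled correctly: conditioning on the generation-$(m_n+1)$ ancestors makes the fresh parts independent across the tuple, and Lemma~\ref{lem:tails} then gives the sharp $(1+o(1))$ asymptotic. Two genuine choices differ from the paper: you place the threshold at $m_n = o(n)$ (the paper uses generation $n-m$ with $m = \Theta(n)$), and you use the decomposition for both bounds, whereas the paper proves the lower bound separately and more cheaply via the FKG inequality of Lemma~\ref{lem:fkg}. Neither difference is in itself a problem; the choice $m_n = o(n)$ can be made to work because for pairs with MRCA at moderate depth $g$ the bound $\P(K(v)>L,K(w)>L)\leq C_q k^{-2n}\E[e^{2q^{n-g}K_\infty}]$ is genuinely sharper than the all-purpose Lemma~\ref{lem:2bound}.

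The gap is in ``A union bound over correlated pairs extends these estimates from pairs to full factorial moments.'' This fails once $|\vec{p}| := p_0 + \cdots + p_{\ell-1} \geq 3$. If the tuple exceedance probability is bounded by that of the correlated pair alone, the count of tuples containing a given pair is of order $k^{(|\vec{p}|-2)n}$, so the correlated contribution becomes roughly $\sum_{g>m_n} k^{2n-g}\cdot k^{(|\vec{p}|-2)n}\cdot k^{-2n} = O\big(k^{(|\vec{p}|-2)n - m_n}\big)$, which diverges whenever $|\vec{p}|>2$ and $m_n = o(n)$. Nor can one multiply the pair probability by marginals for the remaining vertices: Lemma~\ref{lem:fkg} shows the exceedance events are positively associated, so the inequality $\P(\bigcap) \leq \P(\text{pair})\prod\P(\text{single})$ is false. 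What is actually required — this is the content of Lemmas~\ref{lem:close} and~\ref{lem:count} — is to condition on $\mathcal{F}_{m_n+1}$: given the $\nu<|\vec{p}|$ distinct ancestors, the tuple event factors into $\nu$ conditionally independent pieces, one joint piece for the close pair (handled via Lemma~\ref{lem:2bound}) and one marginal piece per remaining ancestor, and only then does the exponential-moment bound from Lemma~\ref{lem:tails} produce a factor of order $k^{-|\vec{p}|n}$, after which your moderate/boundary-layer split does give a vanishing total. You already perform this conditioning on the decorrelated side; carrying it over to the correlated side closes the gap. Two smaller corrections: the stated pair bound should read $k^{-2n}$, not $k^{-(2n-g)}$ (the $k^{-g}$ decay per depth arises only after multiplying by the count $O(k^{2n-g})$), and the ancestor--descendant event is not doubly exponentially rare — its probability is of order $e^{-(2-q^{j'-j})L}$, realised by $\widetilde K^{(w)}$ exceeding about $(1-q^{j'-j})L$ rather than by $K(v)$ reaching $L/q^{j'-j}$; it is still negligible since $2-q^{j'-j}>1$, but for a different reason than you give.
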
%

By the preceding discussion, it is sufficient to show that for all non-negative integers $p_0,\ldots,p_{\ell-1}$, and all real numbers $( t_{i,j} )$ with $0 \leq i \leq \ell-1 , 1 \leq j \leq p_i$, we have 
\begin{align} \label{eq:polimits}
\lim_{ n \to \infty} \mathbb{E} \left[ \prod_{ i = 0}^{ \ell - 1}\prod_{ j =1}^{p_i} N_{n+i}^{[p_i]} \left( [t_{i,1} , \infty ) \times \ldots \times [t_{i,p_i}, \infty) \right)   \right] =  \prod_{ i = 0}^{\ell - 1} \prod_{ j =1}^{p_i} e^{ - t_{i,j}}/\varphi_\infty(q).
\end{align}
To this end, for $\vec{p} := (p_0,\ldots,p_{\ell-1})$, we define
\begin{align}\label{Tnp-def}
\T_n^{\vec{p}} := \left\{ \textbf{u} := \left(u_{i,j} : 0 \leq i \leq \ell-1, 1 \leq j \leq p_i \right) : \text{ $(u_{i,1},\ldots,u_{i,p_i})$ are distinct elements of $\T_{n+i}$} \right\}.
\end{align}
We will use the following many-to-few formula (which follows from the linearity of expectation):
\begin{align} \label{eq:mtf}
 \mathbb{E} \left[ \prod_{ i = 0}^{ \ell - 1}\prod_{ j =1}^{p_i} N_{n+i}^{[p_i]} \left( [t_{i,1} , \infty ) \times \ldots \times [t_{i,p_i}, \infty) \right)   \right]  = \sum_{  \textbf{u} \in \T_{ n }^{\vec{p}} } \mathbb{P} \left( \bigcap_{ i = 0}^{ \ell - 1 } \bigcap_{ j = 1}^{ p_i}  \{ J(u_{i,j}) > t_{i,j} \} \right).
\end{align}
We split the task of proving \eqref{eq:polimits} over the next two sections, first dealing with the easier lower bound, and then with the more difficult upper bound.

\subsection{The lower bound in \eqref{eq:polimits}}
This section is dedicated to proving the lower bound
\begin{align} \label{eq:polimitslower}
\liminf_{ n \to \infty}  \mathbb{E} \left[ \prod_{ i = 0}^{ \ell - 1}\prod_{ j =1}^{p_i} N_{n+i}^{ [p_i]} \left( [t_{i,1} , \infty ) \times \ldots \times [t_{i,p_i}, \infty) \right)   \right]  \geq \prod_{ i = 0}^{\ell - 1} \prod_{ j = 1}^{p_i} e^{ - t_{i,j}}/\varphi_\infty(q).
\end{align}
We begin with a lemma estimating the cardinality of $ \T_n^{\vec{p}} $.

\begin{lemma} \label{lem:numberoftuples}

There is a constant $C_{\vec{p}}\, \in(0,\infty)$ depending on $\vec{p} = (p_0,\ldots,p_{\ell - 1})$, but independent of $n$, such that
\begin{align*}
\prod_{ i=0}^{\ell-1} k^{ p_i(n+i) } \geq \# \T_n^{\vec{p}} \geq  \left( 1 - C_{\vec{p}} k^{ - n} \right) \prod_{ i = 0}^{\ell - 1} k^{ p_i(n+i) } .
\end{align*}  
\end{lemma}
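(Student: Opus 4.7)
The plan is to observe that the constraint in the definition of $\mathbb{T}_n^{\vec{p}}$ only forces distinctness \emph{within} each block indexed by $i$, because for distinct values of $i$ the entries $u_{i,j}$ live in different generations $\mathbb{T}_{n+i}$ and are automatically distinct. Consequently the count factorises completely over $i$:
\begin{equation*}
\# \mathbb{T}_n^{\vec{p}} \;=\; \prod_{i=0}^{\ell-1} \# \bigl\{(u_{i,1},\ldots,u_{i,p_i}) \in \mathbb{T}_{n+i}^{p_i} : \text{all entries distinct}\bigr\} \;=\; \prod_{i=0}^{\ell-1}\prod_{j=0}^{p_i-1}\bigl(k^{n+i}-j\bigr).
\end{equation*}
The upper bound then follows immediately, since each factor $k^{n+i}-j$ is at most $k^{n+i}$.

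For the lower bound I would pull out the leading factor, rewriting
\begin{equation*}
\# \mathbb{T}_n^{\vec{p}} \;=\; \Bigl(\prod_{i=0}^{\ell-1} k^{p_i(n+i)}\Bigr)\,\prod_{i=0}^{\ell-1}\prod_{j=0}^{p_i-1}\bigl(1 - j\,k^{-(n+i)}\bigr).
\end{equation*}
The remaining product involves finitely many factors of the form $1-x$ with $x \in [0,1]$ (the bound $j\,k^{-(n+i)} \leq 1$ holds for all sufficiently large $n$, say $n \geq n_0(\vec{p},k)$; for smaller $n$ the claimed inequality is automatic by adjusting $C_{\vec{p}}$). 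Applying the elementary inequality $\prod_m (1-x_m) \geq 1 - \sum_m x_m$, valid for $x_m \in [0,1]$, gives
\begin{equation*}
\prod_{i=0}^{\ell-1}\prod_{j=0}^{p_i-1}\bigl(1 - j\,k^{-(n+i)}\bigr) \;\geq\; 1 - \sum_{i=0}^{\ell-1}\sum_{j=0}^{p_i-1} j\,k^{-(n+i)} \;=\; 1 - k^{-n}\sum_{i=0}^{\ell-1}\binom{p_i}{2}k^{-i}.
\end{equation*}
Setting $C_{\vec{p}} := \sum_{i=0}^{\ell-1}\binom{p_i}{2}k^{-i}$ (possibly enlarged to absorb the finitely many small values of $n$) yields the desired lower bound.

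There is no real obstacle here: the argument is essentially just the observation that counting ordered tuples of distinct elements from a set of size $N$ gives $N(N-1)\cdots(N-p+1) = N^p(1-O(1/N))$, applied one generation at a time. The only minor point to handle cleanly is absorbing the regime of small $n$ (where $j\,k^{-(n+i)}$ might exceed $1$) into the constant, which is harmless given the convention that $C_{\vec{p}}$ may depend on $\vec{p}$ (and on the fixed parameter $k$).
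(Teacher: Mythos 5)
Your proof is correct, and the overall strategy matches the paper's: write down the exact count and then approximate. The one structural difference worth noting is that you use the exact formula $\# \T_n^{\vec{p}} = \prod_{i=0}^{\ell-1}\prod_{j=0}^{p_i-1}\bigl(k^{n+i}-j\bigr)$ together with the elementary Weierstrass-type inequality $\prod_m(1-x_m)\geq 1-\sum_m x_m$, whereas the paper writes $\# \T_n^{\vec{p}} = \prod_{i=0}^{\ell-1}\binom{k^{n+i}}{p_i}$ and invokes Stirling bounds. Since the elements of $\T_n^{\vec{p}}$ are \emph{ordered} tuples with distinct entries in each block, your falling-factorial formula is the correct count; the paper's binomial coefficient appears to be a minor typo (it would be off by a factor of $\prod_i p_i!$, which is in fact incompatible with the lower bound they claim). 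Your argument is thus slightly more elementary and self-correcting: no Stirling estimates are needed, and the small-$n$ regime is absorbed into the constant exactly as you indicate.
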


\begin{proof}
Since
\begin{equation*}
	\# \T_n^{\vec{p}}  = \prod_{i=0}^{\ell-1} {k^{n+i} \choose p_i}
\end{equation*}
the desired bounds follow by applying the Stirling estimates $\sqrt{ 2 \pi } m^{m+1/2} e^{ - m} \leq m! \leq e m^{m+1/2} e^{-m}$.
%
\end{proof}

The following lemma is an FKG type inequality for correlated events on the tree. Since the proof is not related to the rest of our arguments, it will be given in the appendix.

\begin{lemma}\label{lem:fkg}
For any $n \in \N$, $\vec{p} = (p_0,\ldots,p_{\ell - 1})$ and any tuple $\textbf{u} := (u_{i,j} : 0 \leq i \leq \ell-1, 1 \leq j \leq p_i )$, we have 
\begin{align} \label{eq:red1}
\mathbb{P} \left( \bigcap_{ i = 0}^{ \ell - 1 } \bigcap_{ j = 1}^{ p_i}  \{ J(u_{i,j}) > t_{i,j} \} \right) \geq \prod_{ i = 0}^{ \ell - 1} \prod_{ j = 1}^{ p_i}  \mathbb{P} \left( J(u_{i,j}) > t_{i,j} \right)
\end{align}
for any choice of real numbers $(t_{i,j})$.
\end{lemma}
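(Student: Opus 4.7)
The plan is to realize the events $\{J(u_{i,j}) > t_{i,j}\}$ as increasing events on a product probability space and then invoke the Harris--FKG inequality for product measures. Concretely, the entire rescaled expanding branching random walk is built from the i.i.d.\ collection of standard exponential random variables $(W^{(w)})_{w \in \T}$ attached to the vertices of the $k$-ary tree, which we view as coordinates of a product probability space equipped with the usual coordinatewise partial order. The crucial structural input is the explicit representation
\begin{equation*}
K(u_{i,j}) \;=\; \sum_{m=0}^{|u_{i,j}|} q^{\,|u_{i,j}|-m}\, W^{((u_{i,j})_m)}
\end{equation*}
from \eqref{repofK}, which exhibits $K(u_{i,j})$ as a non-negative linear combination of some of the coordinates $W^{(w)}$. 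In particular $K(u_{i,j})$, and therefore $J(u_{i,j}) = K(u_{i,j}) - \gamma |u_{i,j}|$, is coordinatewise non-decreasing in $(W^{(w)})_{w \in \T}$, so each event $A_{i,j} := \{J(u_{i,j}) > t_{i,j}\}$ is an increasing event on this product space.

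Once monotonicity is in place, the main step is to apply the Harris--FKG correlation inequality: a finite family of increasing events on a product of totally ordered probability spaces is positively correlated. Since the exponential distribution is supported on a totally ordered set and the underlying measure is a product, Harris's inequality gives
\begin{equation*}
\P\left(\bigcap_{i,j} A_{i,j}\right) \;\geq\; \prod_{i,j} \P(A_{i,j}),
\end{equation*}
which is exactly \eqref{eq:red1}. If the appendix is to be self-contained, one can alternatively derive the same bound by induction on the number of coordinates, conditioning on one $W^{(w)}$ at a time and using that conditional expectations of increasing functions of the remaining coordinates are themselves increasing in the conditioned variable -- this is essentially the standard textbook proof of Harris's inequality and adds no new ideas beyond the monotonicity observation.

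There is no real obstacle beyond verifying the monotonicity, since the $u_{i,j}$ are allowed to belong to different generations $\T_{n+i}$; the only mild subtlety is that for distinct $u_{i,j}$ and $u_{i',j'}$ their ancestral paths share the initial segment up to $u_{i,j} \wedge u_{i',j'}$, so the random variables $J(u_{i,j})$ are genuinely correlated. This sharing is, however, precisely what makes the product-space FKG argument give a non-trivial inequality: the shared ancestral exponentials are common increasing inputs to the different $J(u_{i,j})$'s, which is exactly the source of the positive correlation captured by Harris--FKG.
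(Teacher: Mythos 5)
Your proof is correct, but it takes a genuinely different route from the one in the paper's appendix. The paper does \emph{not} invoke Harris--FKG; instead it uses a rather more hands-on inductive decoupling. Its one black-box input is the elementary fact (their display \eqref{replacebyone}, cited from \cite[Lemma 5.2]{gantert2018large}) that for identically distributed, possibly dependent $(A_e)$ and independent $(B_e)$, one has $\P(\cap_e\{A_e+B_e>c_e\}) \le \P(\cap_e\{A+B_e>c_e\})$ where $A$ is a single shared copy; conditioning on $(B_e)$, this reduces to $\P(\cap_e\{A_e>x_e\}) \le \min_e \P(A>x_e) = \P(A > \max_e x_e)$, and so requires no correlation inequality at all. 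The paper then applies this lemma one generation at a time, starting from the root: it replaces the shared increment $S(w)$ of the common ancestor $w$ by i.i.d.\ copies attached to each descendant, thereby decreasing the probability and factoring it over the children of $w$, and iterates down the tree. Your approach instead observes from \eqref{repofK} that every $J(u_{i,j})$ is a coordinatewise non-decreasing function of the underlying i.i.d.\ exponentials $(W^{(w)})_{w\in\T}$, so the events $\{J(u_{i,j})>t_{i,j}\}$ are increasing on a product space and Harris's inequality (together with the standard induction to handle more than two events) gives the conclusion immediately. Both proofs are valid; yours is more conceptual and shorter once Harris is taken as known, whereas the paper's version relies on a more elementary ingredient and makes the tree structure and the role of shared ancestral increments explicit.
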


With Lemma \ref{lem:fkg} at hand, we are now ready to prove the lower bound \eqref{eq:polimitslower}.

\begin{proof}[Proof of \eqref{eq:polimitslower}]
By Lemma \ref{lem:tails}, recalling $J(v) = K(v) - \gamma |v|$ from \eqref{def:NnAndJv}, we have for all $i,j$ that
\begin{align} \label{eq:orange1}
 \mathbb{P} \left( J(u_{i,j}) > t_{i,j} \right)  \geq \frac{ e^{ - t_{i,j} - \gamma ( n + i ) }}{ \varphi_{n+i}(q) } \left( 1 - C_{q} e^{ - (q^{-1} - 1) \left( \gamma n + t_{0}\right) }   \right),
\end{align} 
where $t_0 := \min_{i,j} \{ t_{i,j} \} $ and $C_q\in(0,\infty)$ is some constant depending on $q$, but independent of $n$ and  $(t_{i,j})$. Now, for a sufficiently large constant $C_{\vec{p}\, ,q, t_0}\in(0,\infty)$ depending on $q$, $t_0$ and $\vec{p} = (p_0,\ldots,p_{\ell-1})$, but again independent of $n$ and  $(t_{i,j})$, we have by the Bernoulli inequality for sufficiently large $n$,
\begin{align} \label{eq:yellow1} 
\prod_{ i = 0}^{ \ell - 1} \prod_{ j = 1}^{ p_i}  \left( 1 - C_q e^{ - (q^{-1} - 1) \left( \gamma n + t_{0}\right) }  \right) =  \left( 1 - C_q e^{ - (q^{-1} - 1) \left( \gamma n + t_{0}\right) }  \right)^{|\vec{p}\,|} \geq 1 - C_{q,\vec{p},\,t_0} e^{ - ( q^{-1} - 1)  \gamma n  },
\end{align}
where $|\vec{p}\, | = p_0+p_1+\ldots +p_{\ell-1}$.
Combining \eqref{eq:red1},  \eqref{eq:orange1} and \eqref{eq:yellow1}, for any tuple $(u_{i,j})$ in $\T_n^{\vec{p}}$, we have 
\begin{align*} 
\mathbb{P} \left( \bigcap_{ i = 0}^{\ell -1 } \bigcap_{ j =1}^{ p_i}  \{ J(u_{i,j}) > t_{i,j} \} \right) \geq  \left(  1 - C_{q,\vec{p},\, t_0} e^{ - ( q^{-1} - 1)  \gamma n  } \right) \prod_{ i = 0}^{\ell - 1} \prod_{ j = 1}^{p_i} e^{ - t_{i,j} - \gamma(n+i) }/\varphi_{n+i}(q).
\end{align*}
In particular, by plugging this in the  many-to-few formula \eqref{eq:mtf}, we obtain
\begin{align*} 
 &\mathbb{E} \left[ \prod_{ i = 0}^{ \ell - 1}\prod_{ j =1}^{p_i} N_{n+i}^{[p_i]} \left( [t_{i,1} , \infty ) \times \ldots \times [t_{i,p_i}, \infty) \right)   \right] \\& \geq \# \T_n^{\vec{p}} \left(  1 -  C_{q,\vec{p},\, t_0} e^{ - ( q^{-1} - 1)  \gamma n  }  \right) \prod_{ i = 0}^{\ell - 1} \prod_{ j = 1}^{p_i} e^{ - t_{i,j} - \gamma(n+i) }/\varphi_{n+i}(q).
\end{align*}
Now using the fact that $\gamma = \log k $, and the lower bound in Lemma \ref{lem:numberoftuples}, we find that
\begin{align*} 
\# \T_n^{\vec{p}} \prod_{ i = 0}^{\ell - 1} \prod_{ j = 1}^{p_i} e^{ - \gamma(n+i)} \geq (1 - C_{\vec{p}}k^{-n} ).
\end{align*}
Combining the last two estimates we obtain
\begin{align*}
 &\mathbb{E} \left[ \prod_{ i = 0}^{ \ell - 1}\prod_{ j =1}^{p_i} N_{n+i}^{[p_i]} \left( [t_{i,1} , \infty ) \times \ldots \times [t_{i,p_i}, \infty) \right)   \right] \\& \geq (1 - C_{\vec{p}}\, k^{-n} ) \left(  1 -  C_{q,\vec{p},\, t_0} e^{ - ( q^{-1} - 1)  \gamma n  }  \right) \prod_{ i = 0}^{\ell - 1} \prod_{ j = 1}^{p_i} e^{ - t_{i,j} }/\varphi_{n+i}(q).
\end{align*}
Taking $n \to \infty$ concludes the proof of \eqref{eq:polimitslower}.
\end{proof}

\subsection{The hard direction in Theorem \ref{thm:largestpp}: an overview}

In this section we work towards proving the upper bound in \eqref{eq:polimits}. Namely, the goal is to show that  
\begin{align} \label{eq:upperlim}
\limsup_{ n \to \infty} \mathbb{E} \left[ \prod_{ i = 0}^{ \ell - 1}\prod_{ j =1}^{p_i} N_{n+i}^{ [p_i]} \left( [t_{i,1} , \infty ) \times \ldots \times [t_{i,p_i}, \infty) \right)   \right] \leq \prod_{ i = 0}^{\ell - 1} \prod_{ j = 1}^{p_i} e^{ - t_{i,j}}/\varphi_\infty(q).
\end{align}
In light of the many-to-few formula \eqref{eq:mtf}, to tackle 
the hard direction, we need for all tuples $(u_{i,j})$ in $\T_n^{\vec{p}}$ to obtain effective upper bounds on the \emph{exceedance probabilities} 
\begin{align*}
\mathbb{P} \left(\bigcap_{ i = 0}^{\ell -1 } \bigcap_{ j =1}^{ p_i}   \{J(u_{i,j}) > t_{i,j} \}  \right) .
\end{align*}

We now overview the main idea in proving an inequality of the form \eqref{eq:upperlim}. Given integers $0 \leq m \leq n$ and a tuple $\textbf{u} = \left( u_{i,j} : 0 \leq i \leq \ell -1, 1 \leq j \leq p_i \right)$, we define the number $1 \leq P_{n-m}( \textbf{u)} \leq |\vec{p}\, |=p_0 + \ldots + p_{ \ell - 1}$ by setting
\begin{align}\label{pnm-def}
P_{n-m}(\textbf{u}) := \text{number of different ancestors in generation $n-m$ of the vertices $u_{i,j}$}.
\end{align}
For a special choice of $m$ which we give below, we distinguish between two different types of tuples:

\begin{itemize}
\item We say a tuple $\textbf{u}$ in $\T_n^{\vec{p}}$ is \emph{distantly related} (in generation $n-m$) if 
\begin{align*}
P_{n-m}( \textbf{u} ) = p_0 + \ldots + p_{\ell-1} = |\vec{p}\, |.
\end{align*}
We will see that provided $m \leq \theta n$ for some constant $\theta <1$, the overwhelming number of tuples in $\T_n^{\vec{p}}$ are distantly related as $m$ and $n$ become large. Since the bulk of particles are of this form, we will require a fairly delicate $m$-dependent control on the exceedance probabilities; see Lemma \ref{lem:distant} below. 
\item We say a tuple $\textbf{u}$ in $\T_n^{\vec{p}}$ is \emph{$\nu$-closely related} (in generation $n-m)$ if 
\begin{align*}
P_{n-m}( \textbf{u} ) = \nu \text{ for some $\nu < |\vec{p}\, |$}.
\end{align*}
We find that for such tuples, the exceedance probability $\mathbb{P} \left(\bigcap_{ i = 0}^{\ell -1 } \bigcap_{ j =1}^{ p_i}   \{J(u_{i,j}) > t_{i,j} \}  \right)$ has a larger order than for distantly related tuples. 
However, it turns out that this order is negligible when compared with the relative size of the number of closely related tuples. Indeed, we show in Lemma \ref{lem:count} that the number of tuples $\textbf{u}$ with $P_{n-m}(\textbf{u}) = \nu$ has the order $k^{ \nu n}$, while Lemma \ref{lem:close} tells us that the associated exceedance probabilities are of order $o(k^{- \nu n})$. 
\end{itemize}

The next two Lemmas are the main results of this section, controlling respectively the  exceedance probabilities associated with distantly and closely related tuples.

\begin{lemma} \label{lem:distant}
Let $\textbf{u}$ be distantly related. Then for all $m\in\N$ such that $|\vec{p}\,|q^{m-1} \leq 1/2$, we have 
\begin{align} \label{eq:fine}
\mathbb{P} \left(\bigcap_{ i = 0}^{\ell -1 } \bigcap_{ j =1}^{ p_i}   \{J(u_{i,j}) > t_{i,j} \}  \right)  \leq (1 + \varepsilon_{n,m}) \prod_{ i = 0}^{\ell - 1} \prod_{ j = 1}^{p_i} \left(e^{ - \gamma(n+i) - t_{i,j} }/\varphi_{\infty}(q) \right),
\end{align}
for all real numbers $t_{i,j}$, where 
\begin{align*}
\varepsilon_{n,m} = C_{\vec{p},\, q, t_0} \left( e^{ - \frac{1}{q} \gamma n  } + q^m \right)
\end{align*}
for $t_0 := \min_{i,j} \{ t_{i,j} \}$.
\end{lemma}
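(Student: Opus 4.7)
The plan is to exploit the distantly-related structure of $\textbf{u}$ to decouple the events $\{J(u_{i,j})>t_{i,j}\}$ via conditional independence, apply the sharp tail estimate of Lemma \ref{lem:tails} to each factor, and then handle the residual correlation between the ancestral positions through a moment-generating-function bound.

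First I would apply \eqref{recrepofK} to each $u_{i,j}\in\T_{n+i}$ relative to its ancestor $(u_{i,j})_{n-m}\in\T_{n-m}$, writing
\begin{equation*}
K(u_{i,j}) = q^{m+i}\,K((u_{i,j})_{n-m}) + \widetilde K^{(i,j)},
\end{equation*}
where $\widetilde K^{(i,j)}$ depends only on the exponentials $W^{(w)}$ with $w$ on the path from the child of $(u_{i,j})_{n-m}$ down to $u_{i,j}$, and is equal in distribution to some $K_{r_i}$. Because $\textbf{u}$ is distantly related, the $|\vec{p}\,|$ such paths are vertex-disjoint, so the $\widetilde K^{(i,j)}$ are mutually independent and independent of $\mathcal F_{n-m}:=\sigma(K(v):v\in\T_{n-m})$. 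Conditioning on $\mathcal F_{n-m}$ then turns the joint probability into a product:
\begin{equation*}
\P\Bigl(\bigcap_{i,j}\{J(u_{i,j})>t_{i,j}\}\,\Big|\,\mathcal F_{n-m}\Bigr) = \prod_{i,j}\P\bigl(\widetilde K^{(i,j)} > s_{i,j} \,\big|\, \mathcal F_{n-m}\bigr),
\end{equation*}
with $s_{i,j}:=t_{i,j}+\gamma(n+i)-q^{m+i}K((u_{i,j})_{n-m})$.

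Next I would apply Lemma \ref{lem:tails} together with $\varphi_{r_i}(q)\geq\varphi_\infty(q)$ to bound each conditional factor by $e^{-s_{i,j}}/\varphi_\infty(q)+C_qe^{-s_{i,j}/q}$. Multiplying out, the principal contribution equals
\begin{equation*}
\frac{\prod_{i,j} e^{-t_{i,j}-\gamma(n+i)}}{\varphi_\infty(q)^{|\vec{p}\,|}}\;\E\Bigl[\exp\Bigl(\textstyle\sum_{i,j} q^{m+i} K((u_{i,j})_{n-m})\Bigr)\Bigr].
\end{equation*}
To control this (correlated) moment generating function, I would expand each $K((u_{i,j})_{n-m})$ along its root-to-ancestor path via \eqref{repofK}, rewriting the random exponent as $\sum_{w\in\mathcal T^*}\alpha_w W^{(w)}$, a linear combination of \emph{independent} standard exponentials indexed by the vertices $w$ of the union $\mathcal T^*\subseteq\bigcup_{l\leq n-m}\T_l$ of the $|\vec{p}\,|$ ancestral paths, with $\alpha_w = q^{(n-m)-|w|}\sum_{(i,j):(u_{i,j})_{|w|}=w}q^{m+i}$. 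A direct count gives $\alpha_w\leq|\vec{p}\,|q^{n-|w|}$ and $\sum_w\alpha_w\leq C_q|\vec{p}\,|q^m$, and the hypothesis $|\vec{p}\,|q^{m-1}\leq1/2$ ensures that each $\alpha_w\leq1/2$, so
\begin{equation*}
\E\Bigl[e^{\sum_w\alpha_w W^{(w)}}\Bigr] = \prod_w (1-\alpha_w)^{-1} \leq \exp\bigl(2\textstyle\sum_w\alpha_w\bigr) \leq 1+C_{q,\vec{p}}\,q^m.
\end{equation*}

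The residual contribution from the $C_qe^{-s_{i,j}/q}$ terms would be handled by the same strategy: now the relevant MGF is evaluated at exponent $q^{m+i-1}$ (still well below $1$ thanks to $|\vec{p}\,|q^{m-1}\leq1/2$), while the prefactor $e^{-\gamma(n+i)/q}\leq e^{-\gamma n/q}$ delivers the second error term. Gathering all contributions and absorbing powers via $(1+x)^{|\vec{p}\,|}\leq 1+C_{\vec{p}}x$ for small $x$ yields the claimed bound with $\varepsilon_{n,m}=C_{\vec{p},q,t_0}\bigl(e^{-\gamma n/q}+q^m\bigr)$. The main obstacle is the MGF step: the ancestor values $K((u_{i,j})_{n-m})$ are genuinely dependent through their shared ancestry, so a naive "product of marginals" identity fails; the key observation is that expanding along the ancestral tree reveals the exponent as a linear combination of independent exponentials, with weights whose smallness (quantified precisely by the hypothesis on $|\vec{p}\,|q^{m-1}$) is what drives the overall $O(q^m)$ error.
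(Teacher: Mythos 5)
Your proof is correct, but it takes a genuinely different route from the paper's. After the common first step (decomposing $K(u_{i,j}) = q^{m+i}K((u_{i,j})_{n-m}) + \widetilde K^{(i,j)}$ with the $\widetilde K^{(i,j)}$ independent of each other and of $\mathcal F_{n-m}$, which is where the distantly-related hypothesis is used), the paper does \emph{not} condition on the full ancestor $\sigma$-algebra and face a correlated moment generating function. Instead it applies the monotone ``replace identically distributed correlated variables by one copy'' inequality \eqref{replacebyone} (imported from Lemma 5.2 of Gantert et al.) to substitute a single $K_\infty$ for all $|\vec{p}\,|$ correlated ancestor values $K((u_{i,j})_{n-m})$, reducing the estimate to $\mathbb E[e^{|\vec p\,|q^{m-1}K_\infty}]$, which is handled directly via the tail bound of Lemma \ref{lem:tails}. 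You instead condition on $\mathcal F_{n-m}$, apply Lemma \ref{lem:tails} factorwise, and then resolve the correlated exponent $\sum_{i,j}q^{m+i}K((u_{i,j})_{n-m})$ by expanding each $K$ along the root-to-ancestor path via \eqref{repofK}, thereby rewriting it as $\sum_w \alpha_w W^{(w)}$ with \emph{independent} $W^{(w)}$'s; the weights $\alpha_w \leq |\vec p\,|q^{n-|w|} \leq 1/2$ and $\sum_w\alpha_w \leq C_q|\vec p\,|q^m$ under the stated hypothesis, giving the $O(q^m)$ error from $\prod_w(1-\alpha_w)^{-1}\leq e^{2\sum_w\alpha_w}$. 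Your route is more elementary and self-contained (no external FKG-type input) and exposes exactly how the shared ancestry enters via the weights $\alpha_w$; the paper's is shorter once \eqref{replacebyone} is available. One minor slip: the residual contribution gives a relative error of order $e^{-(1/q-1)(t_0+\gamma n)}$, not $e^{-\gamma n/q}$, since the main term already carries $e^{-(t_{i,j}+\gamma(n+i))}$ so only the \emph{difference} in exponents survives (the paper's own proof correctly yields $e^{-(1/q-1)\gamma n}$; the exponent $e^{-\gamma n/q}$ in the lemma statement appears to be a typo). This does not affect the downstream use, where only $\varepsilon_{n,m}\to0$ is needed.
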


\begin{lemma} \label{lem:close}
Let $m\in\N$ be sufficiently large so that $2|\vec{p}\, | q^m < 1/2$. Let $t_0 := \min_{ i,j} \{ t_{i,j} \}$ for real numbers $t_{i,j}$, and $\textbf{u}$ in $\T_n^{\vec{p}}$ be such that $P_{n-m}( \textbf{u}) = \nu$. Then  $\mathbb{P}  \left(\bigcap_{ i = 0}^{\ell -1 } \bigcap_{ j =1}^{ p_i}   \{J(u_{i,j}) > t_{i,j} \}  \right)  = o( k^{ - \nu n })$. More specifically, there is a constant $C_{q,t_0}\in(0,\infty)$ such that 
\begin{align*}
\mathbb{P}  \left(\bigcap_{ i = 0}^{\ell -1 } \bigcap_{ j =1}^{ p_i}   \{J(u_{i,j}) > t_{i,j} \}  \right)  \leq C_{q,t_0} \exp \left(  - (\theta_q + \nu ) \gamma n \right),
\end{align*}
where $\theta_q := \min \left\{ \frac{1}{q} , 2 - q \right\} - 1 > 0$. 
\end{lemma}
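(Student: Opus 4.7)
Since $\nu<|\vec p\,|$, at least one of the $\nu$ classes of descendants (at level $n-m$) among the vertices of $\mathbf{u}$ has size at least $2$; call the common ancestor $A_1\in\T_{n-m}$ and pick two tuple vertices $u,u'$ descended from $A_1$, together with one representative $v_s$ from each of the other classes $s=2,\ldots,\nu$. By monotonicity
\[
\P\Bigl(\bigcap_{i,j}\{J(u_{i,j})>t_{i,j}\}\Bigr)\leq \P\Bigl(E_u\cap E_{u'}\cap\bigcap_{s=2}^{\nu}E_{v_s}\Bigr),
\]
where $E_v:=\{K(v)>\gamma|v|+t_v^*\}$. The plan is to show that the pair $(u,u')$ contributes an extra decay factor $e^{-\theta_q\gamma n}$ beyond the rate $\nu$ which $\nu$ isolated representatives would provide.

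Next I would condition on $\mathcal{F}_{n-m}:=\sigma(W^{(w)}:|w|\leq n-m)$, under which the subtrees below the distinct $A_s$ are independent. For $s\geq 2$, writing $K(v_s)=q^{|v_s|-(n-m)}K(A_s)+\widetilde K_{v_s}$ with $\widetilde K_{v_s}$ independent of $\mathcal{F}_{n-m}$ and applying the tail bound \eqref{crude} yields
\[
\P(E_{v_s}\mid\mathcal{F}_{n-m})\leq C_q\exp\bigl(-\gamma|v_s|-t_s^*+q^{|v_s|-(n-m)}K(A_s)\bigr).
\]
For the pair $(u,u')$ with most recent common ancestor $B$, assume first the typical situation in which $|u|-|B|,|u'|-|B|\geq 1$. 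I would apply a Chernoff bound with parameter $\lambda:=(2-q)/2\in(0,1)$ to $K(u)+K(u')$. The crucial verification is that the binding MGF coefficient satisfies $2\lambda q=(2-q)q=1-(1-q)^2<1$, so that after splitting $K(u)+K(u')$ into the $\mathcal{F}_{n-m}$-measurable piece $(q^{|u|-(n-m)}+q^{|u'|-(n-m)})K(A_1)$ and the independent contributions below $A_1$, all factors in the conditional MGF are bounded by a constant depending only on $q$. This gives
\[
\P(E_u\cap E_{u'}\mid\mathcal{F}_{n-m})\leq C_q\exp\bigl(-\lambda(\gamma|u|+\gamma|u'|+t_u^*+t_{u'}^*)+(2-q)q^m K(A_1)\bigr).
\]
The edge case in which one of $u,u'$ is an ancestor of the other (only possible if $\ell\geq 2$) is easier, since a direct integration against the exponential tail of the single ``new'' piece produces the even stronger exponent $1/q\geq 2-q$ per pair.

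The main obstacle is controlling the resulting unconditional MGF $\E\bigl[\exp\bigl(\sum_{s=1}^\nu c_s K(A_s)\bigr)\bigr]$, where $c_1\leq(2-q)q^m$ and $c_s\leq q^m$ for $s\geq 2$, because the $K(A_s)$ are correlated through their shared ancestral paths. The plan is to expand the sum as $\sum_w C(w)W^{(w)}$ over vertices $w$ at generations $\leq n-m$. Since each coefficient satisfies $C(w)\leq(\nu+1)q^{(n-m)-|w|}\cdot 2q^m\leq 2|\vec p\,|q^{n-|w|}$, the hypothesis $2|\vec p\,|q^m<1/2$ keeps every $C(w)$ strictly below $1/2$. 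Applying $1/(1-x)\leq e^{2x}$ on each independent-exponential MGF factor and summing a geometric series in $q$, the joint MGF is bounded by a constant depending only on $q$. Combining everything with $|u|,|u'|,|v_s|\geq n$ and $t^*_\cdot\geq t_0$ finally yields
\[
\P\Bigl(\bigcap_{i,j}E_{i,j}\Bigr)\leq C_{q,t_0}\exp\bigl(-(\nu-1+2\lambda)\gamma n\bigr)=C_{q,t_0}\exp\bigl(-(\nu+1-q)\gamma n\bigr),
\]
which is exactly the claim, since $\theta_q=\min(1/q,2-q)-1=1-q$ for $q\in(0,1)$.
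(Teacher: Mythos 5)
Your proof is correct and reaches the stated exponent $\theta_q + \nu$, but it follows a genuinely different route from the paper on the two technical steps. First, to extract the extra decay from the pair $(u,u')$ that collides at level $n-m$, the paper isolates a standalone auxiliary result (its Lemma \ref{lem:2bound}) which bounds $\mathbb{P}(K(w)>L,K(w')>L)$ by $C_q e^{-\lambda_q L}$ with $\lambda_q=\min\{1/q,2-q\}$, and proves it by conditioning on the common ancestor's value and integrating explicitly against the density of $K_\infty$, splitting the range $[0,\infty)$ at $L$ and $L/q$. You instead run a Chernoff bound at $\lambda=(2-q)/2$ on the sum $K(u)+K(u')$; the verification that $2\lambda q=(2-q)q<1$ is precisely what keeps the MGF of the shared ancestral contribution finite, and you recover the identical exponent $2\lambda=2-q=\lambda_q$ (for $q\in(0,1)$ one always has $1/q-(2-q)=(1-q)^2/q>0$, so the $\min$ is $2-q$ in both treatments). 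Second, to control the residual joint exponential moment $\mathbb{E}\bigl[\exp\bigl(\sum_s c_s K(A_s)\bigr)\bigr]$ with correlated $K(A_s)$, the paper takes the short route of bounding each $c_s$ by $\tfrac{1}{2\nu}$ and invoking the convexity inequality $\exp(\tfrac1\nu\sum a_s)\le\tfrac1\nu\sum e^{a_s}$ to reduce to a single $\mathbb{E}[e^{K_\infty/2}]$; you instead expand the exponent over the underlying i.i.d.\ exponentials $W^{(w)}$, observe that $2|\vec p\,|q^m<1/2$ forces every coefficient $C(w)$ below $1/2$, and bound the product $\prod_w(1-C(w))^{-1}$ via $1/(1-x)\le e^{2x}$ and a geometric series. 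Both computations are valid; the paper's version is shorter and more modular (Lemma \ref{lem:2bound} is reusable), while yours makes explicit why the hypothesis $2|\vec p\,|q^m<1/2$ is exactly the right threshold. Your handling of the ancestor-descendant edge case is also consistent with what Lemma \ref{lem:2bound} absorbs silently via its $1/q$ branch.
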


The proofs of Lemma \ref{lem:close} and Lemma \ref{lem:distant} are both lengthy, and we defer them to Sections \ref{sec:Distantly} and \ref{sec:Closely}, respectively. We now conclude this overview  with the following short lemma on the number of closely related tuples, which will be used in conjunction with Lemma \ref{lem:close} and Lemma \ref{lem:distant} to prove the upper bound \eqref{eq:upperlim}.

\begin{lemma} \label{lem:count}
Recall \eqref{Tnp-def} and \eqref{pnm-def}. We have the following bound on the number of $\nu$-closely related tuples in generation $n-m$:
\begin{align*}
\# \left\{ \textbf{u} \in \T_n^{\vec{p}} : P_{n-m} ( \textbf{u} ) = \nu \right\} \leq C_{k,\ell,\vec{p}}\, k^{ n \nu} k^{ (|\vec{p}\,| - \nu ) m }.
\end{align*}
\end{lemma}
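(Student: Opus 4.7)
The plan is to count tuples by decomposing the choice into three independent stages: (i) pick the $\nu$ distinct ancestors in generation $n-m$ shared by the coordinates of $\textbf{u}$; (ii) for each coordinate $(i,j)$ of the tuple, decide which of these $\nu$ ancestors it descends from; (iii) for each coordinate, pick the actual descendant in the correct generation $n+i$ within the chosen ancestor's subtree. Since the claimed bound is an upper bound, it is harmless that this scheme will over-count tuples whose true ancestor count is strictly less than $\nu$.

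The first stage contributes $\binom{k^{n-m}}{\nu} \leq k^{(n-m)\nu}/\nu!$ possibilities, using that $|\T_{n-m}| = k^{n-m}$. The second stage contributes at most $\nu^{|\vec{p}\,|}$ possibilities, which is bounded in terms of $\vec{p}$ alone. For the third stage, I use the key $k$-ary structure: a vertex $w \in \T_{n-m}$ has exactly $k^{(n+i)-(n-m)} = k^{m+i}$ descendants in $\T_{n+i}$, so the coordinate $u_{i,j}$ has at most $k^{m+i}$ possible values once its ancestor is fixed. Multiplying over all coordinates yields
\begin{equation*}
\prod_{i=0}^{\ell-1} \prod_{j=1}^{p_i} k^{m+i} = k^{m |\vec{p}\,|} \cdot k^{\sum_{i=0}^{\ell-1} i\, p_i}.
\end{equation*}

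Combining the three stages, the total count is at most
\begin{equation*}
\frac{1}{\nu!}\, \nu^{|\vec{p}\,|}\, k^{\sum_i i\, p_i} \cdot k^{(n-m)\nu + m|\vec{p}\,|}.
\end{equation*}
The exponent simplifies as $(n-m)\nu + m|\vec{p}\,| = n\nu + m(|\vec{p}\,| - \nu)$, and the prefactor depends only on $k$, $\ell$, and $\vec{p}$ (since $\nu \leq |\vec{p}\,|$ and $i \leq \ell-1$). Absorbing the prefactor into a constant $C_{k,\ell,\vec{p}}$ gives the claim.

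I do not anticipate any real obstacle: this is a purely combinatorial counting argument that exploits the regular $k$-ary structure of $\T$. The only delicate point is making sure the over-counting in stages (i) and (ii) is acceptable, which it is because we only need an upper bound; the tight exponent $k^{n\nu}$ comes from stage (i) choosing ancestors high up in the tree, while the factor $k^{m(|\vec{p}\,|-\nu)}$ records the cost of the $|\vec{p}\,| - \nu$ extra descendants that are forced to share ancestors.
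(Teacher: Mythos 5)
Your argument is correct and takes essentially the same approach as the paper: choose $\nu$ ancestors in generation $n-m$, then count descendants within each ancestor's subtree. Your version is in fact slightly more careful than the paper's — you make the ancestor-assignment step (your stage (ii), contributing the $\nu^{|\vec{p}\,|}$ factor) explicit, whereas the paper tacitly absorbs it; both cost only a constant depending on $\vec{p}$, so the exponents agree either way.
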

\begin{proof}
There are at most $k^{(n-m) \nu}$ ways of choosing $\nu$ different ancestors in generation $n-m$. Each individual in generation $n-m$ has $k^{m + i }$ descendents in generation $n+i$. Using the crude bound that $k^{ m + i} \leq k^{ m + \ell - 1}$ whenever $i \leq \ell - 1$, the number of $\nu$-closely related tuples in generation $n-m$ is bounded from above by
\begin{align*}
k^{ (n-m) \nu } \cdot \left( k^{ m + \ell - 1 } \right)^{ |\vec{p}\,| } = C_{k,\ell,\vec{p}} \, k^{ \nu n } k^{ (|\vec{p}\,| - \nu ) m },
\end{align*}
where $C_{k,\ell,\vec{p}} := k^{ |\vec{p}\,| ( \ell-1) }$.
 \end{proof}

We  now  show how Lemma \ref{lem:distant}, Lemma \ref{lem:close} and Lemma \ref{lem:count} are combined to obtain  \eqref{eq:upperlim}.

\begin{proof}[Proof of \eqref{eq:upperlim} assuming Lemma \ref{lem:distant} and Lemma \ref{lem:close}]
By the many-to-few formula \eqref{eq:mtf}, we have 
\begin{align} \label{eq:truerep}
 &\mathbb{E} \left[ \prod_{ i = 0}^{ \ell - 1}\prod_{ j =1}^{p_i} N_{n+i}^{ [p_i]} \left( [t_{i,1} , \infty ) \times \ldots \times [t_{i,p_i}, \infty) \right)   \right] \nonumber \\
&= \sum_{ \textbf{u} : P_{n-m}( \textbf{u}) = |\vec{p}\,|} \mathbb{P}  \left(\bigcap_{ i = 0}^{\ell -1 } \bigcap_{ j =1}^{ p_i}   \{J(u_{i,j}) > t_{i,j} \}  \right)  + \sum_{ \nu = 1}^{ |\vec{p}\,|- 1} \sum_{ \textbf{u} : P_{n-m}(\textbf{u}) = \nu} \mathbb{P} \left(\bigcap_{ i = 0}^{\ell -1 } \bigcap_{ j =1}^{ p_i}   \{J(u_{i,j}) > t_{i,j} \}  \right)  .
\end{align}
We begin by controlling the contribution from distantly related tuples. Since there are at most $\prod_{ i = 0}^{\ell - 1} k^{ p_i(n+i) }$ elements in 
$\T_n^{\vec{p}}$, using Lemma \ref{lem:distant}, we obtain
\begin{align} \label{eq:sibelius1}
 \sum_{ \textbf{u} : P_{n-m}( \textbf{u}) = |\vec{p}\,|} \mathbb{P}  \left(\bigcap_{ i = 0}^{\ell -1 } \bigcap_{ j =1}^{ p_i}   \{J(u_{i,j}) > t_{i,j} \}  \right)  & \leq  (1 + \varepsilon_{n,m})  \prod_{ i = 0}^{\ell - 1} k^{ p_i(n+i) }\prod_{ i = 0}^{\ell - 1} \prod_{ j = 1}^{p_i} \big( e^{ - \gamma(n+i) - t_{i,j} } / \varphi_{\infty}(q) \big) \nonumber \\
&=  (1 + \varepsilon_{n,m})  \prod_{ i = 0}^{\ell - 1} \prod_{ j = 1}^{p_i} \big( e^{ - t_{i,j} }/\varphi_{\infty}(q) \big) \nonumber \\
&\leq  \prod_{ i = 0}^{\ell - 1} \prod_{ j = 1}^{p_i} \big( e^{ - t_{i,j} }/\varphi_{\infty}(q) \big) + \varepsilon_{n,m} \big( e^{ -t_0 }/\varphi_\infty(q)\big) ^{- |\vec{p}\,|} ,
\end{align}
where $\varepsilon_{n,m}$ is as in the statement of Lemma \ref{lem:distant}, i.e.\, $\varepsilon_{n,m} = C_{\vec{p}, \, q, t_0} ( e^{ - \frac{1}{q} ( \gamma n -t_0) } + q^m )$, and $t_0 = \min_{i,j} \{ t_{i,j} \}$. We now control the contribution from closely related tuples. Indeed, combining Lemma~\ref{lem:close} with Lemma~\ref{lem:count}, provided that $2|\vec{p}\,| q^m < 1/2$, for each $1 \leq \nu \leq |\vec{p}\,|-1$, we have
\begin{align} \label{eq:sibelius2}
\sum_{ \textbf{u} : P_{n-m}(\textbf{u}) = \nu} \mathbb{P} \left(\bigcap_{ i = 0}^{\ell -1 } \bigcap_{ j =1}^{ p_i}   \{J(u_{i,j}) > t_{i,j} \}  \right)  &\leq C_{k, \ell,\vec{p},\, q} k^{n \nu} k^{ (|\vec{p}\,|- \nu)m } \exp \left( - \theta_q \gamma n  - \nu \gamma n \right) \nonumber \\
& \leq C_{k, \ell,\vec{p},\, q}  \exp \left( \gamma( (|\vec{p}\,| - 1 )m - \theta_q n ) \right).
\end{align} 
Now for each $n\in\N$, we set $m := \tilde{\theta}_q n$, where $|\vec{p}\,| \tilde{\theta}_q < \theta_q$, and send $n \to \infty$. Now by using the bounds \eqref{eq:sibelius1} and \eqref{eq:sibelius2} in \eqref{eq:truerep}, we obtain \eqref{eq:upperlim}. 
\end{proof}

This finishes the proof of the upper bound \eqref{eq:upperlim}, and thereby completing the proof of Theorem \ref{thm:poisson}, {respectively Theorem \ref{thm:largestpp}.} It remains to prove Lemma \ref{lem:distant} and Lemma \ref{lem:close}, which we do in the next two sections.

\subsection{Bounding exceedance probabilities of distantly related tuples}\label{sec:Distantly}

\begin{proof}[Proof of Lemma \ref{lem:distant}]
Let $P_{n-m}( \textbf{u}) = |\vec{p}\,|$. For each $i,j$, let $v_{i,j}$ denote the ancestor of $u_{i,j}$ in generation $(n-m)$. Since $P_{n-m}( \textbf{u} ) = | \vec{p}\, |$, the sites $v_{i,j}$ form $|\vec{p}\, |$ distinct elements of generation $n-m$. Now by construction, we have
\begin{align} \label{eq:linz0} 
\mathbb{P}  \left(\bigcap_{ i = 0}^{\ell -1 } \bigcap_{ j =1}^{ p_i}   \{ J(u_{i,j}) > t_{i,j} \}  \right) &= \mathbb{P}  \left(\bigcap_{ i = 0}^{\ell -1 } \bigcap_{ j =1}^{ p_i}   \{K(u_{i,j}) > t_{i,j} + \gamma(n+i)  \}  \right)  \nonumber  \\
&= \mathbb{P}  \left(\bigcap_{ i = 0}^{\ell -1 } \bigcap_{ j =1}^{ p_i}   \{ q^{m+i} K(v_{i,j}) + K_{m+i}^{(i,j)} > t_{i,j} + \gamma(n+i) \}  \right),
\end{align}
where the variables $\left\{ K_{m+i}^{(i,j)}: 0 \leq i \leq \ell-1 , 1 \leq j \leq p_i \right\}$ are independent, and each $K_{m+i}^{(i,j)}$ is distributed as $K_{m+i}$.

Consider the following general fact. If for a finite indexing set $\mathcal{E}$, $(A_e )_{ e \in \mathcal{E}}$ are identically distributed (and possibly dependent) random variables with the same law as $A$, and $(B_e )_{ e \in \mathcal{E}}$ are (possibly dependent but) independent of $(A_e )_{ e \in \mathcal{E}}$ with any distributions, then we have that 
\begin{align}\label{replacebyone}
\mathbb{P} \left( \bigcap_{ e \in \mathcal{E}} \{ A_e + B_e > c_e \} \right) \leq \mathbb{P} \left( \bigcap_{e \in \mathcal{E} } \{ A + B_e > c_e \} \right).
\end{align} 
The proof is easy and can be found in \cite[Lemma 5.2]{gantert2018large}. Using this fact in \eqref{eq:linz0} with $A_{i,j} = K(v_{i,j})$ and $B_{i,j} = K_{m+i+1}^{(i,j)}$, we may replace $K(v_{i,j})$ in \eqref{eq:linz0} with a single copy of $K_{n-m}$, so that we have the upper bound
\begin{align} \label{eq:linz1}
\mathbb{P}  \left(\bigcap_{ i = 0}^{\ell -1 } \bigcap_{ j =1}^{ p_i}   \{ J(u_{i,j}) > t_{i,j} \}  \right) &\leq \mathbb{P}  \left(\bigcap_{ i = 0}^{\ell -1 } \bigcap_{ j =1}^{ p_i}   \{ q^{m+i} K_{n-m} + K_{m+i}^{(i,j)} > t_{i,j} + \gamma(n+i) \}  \right).
\end{align}
Finally, using the fact that $K_\infty$ stochastically dominates $K_n$ for each $n\in\N$, as well as the fact that $q^{m+i} \leq q^m$, we may simplify several matters of indexing by extracting from \eqref{eq:linz1} the upper bound
\begin{align} \label{eq:linz2}
\mathbb{P}  \left(\bigcap_{ i = 0}^{\ell -1 } \bigcap_{ j =1}^{ p_i}   \{ J(u_{i,j}) > t_{i,j} \}  \right) &\leq \mathbb{P}  \left(\bigcap_{ i = 0}^{\ell -1 } \bigcap_{ j =1}^{ p_i}   \{ q^{m+i} K_{\infty} + K_{\infty}^{(i,j)} > t_{i,j} + \gamma(n+i) \}  \right) \nonumber \\
&= \mathbb{E} \left[  \prod_{ i = 0}^{ \ell-1} \prod_{ j = 1}^{p_i} \mathbb{P} \left( K_\infty^{(i,j)} > t_{i,j} + \gamma(n+i) - q^m K_\infty \bigg| K_\infty \right) \right],
\end{align}
where $K_\infty$ and $K_\infty^{(i,j)}$ are independent copies of $K_\infty$, and the last equality above follows from using the definition of conditional expectation. We now control the terms inside the product in the expectation on the right-hand side of \eqref{eq:linz2}. Indeed, by
Lemma~\ref{lem:tails} for each $i,j$ we have 
\begin{align*}
\mathbb{P} \left( K_\infty^{(i,j)} > t_{i,j} + \gamma(n+i) - q^{m}K_\infty \bigg| K_\infty \right) \leq \frac{ e^{ - (t_{i,j} + \gamma(n+i) - q^m K_\infty) } }{ \varphi_\infty(q) } \left( 1 + C_q e^{ - (1/q - 1) (t_0 + \gamma n - q^{m} K_\infty )_+ } \right),
\end{align*} 
where we recall $t_0 := \min_{i,j} \{ t_{i,j}  \}$, and for a real number $x$, we let $x_+$ denote the maximum of $x$ and $0$. 

Now for every $n \in \mathbb{N}$ and every $C_q \in(0,\infty)$, there is a second constant $C_{q,n}\in(0,\infty)$ such that $(1 + C_q w )^n \leq 1 + C_{q,n} w$ for all $w \in [0,1)$. In particular, setting
$w =  e^{ - (1/q - 1) (t_0 + \gamma n - q^{m} K_\infty )_+ }$ we have 
\begin{align} \label{eq:salzburg}
& \prod_{ i = 0}^{ \ell-1} \prod_{ j = 1}^{p_i} \mathbb{P} \left( K_\infty^{(i,j)} > t_{i,j} + \gamma(n+i) - q^{m}K_\infty \bigg| K_\infty \right) \nonumber \\
& \leq \left(1 + C_{\vec{p},\, q,t_0} e^{ -(1/q-1)(\gamma n - q^{m} K_\infty)  } \right) e^{ |\vec{p}\,|q^{m}K_\infty}  \prod_{ i = 0}^{ \ell-1} \prod_{ j = 1}^{p_i} \big( e^{ - t_{i,j} - \gamma(n+i)} / \varphi_\infty(q)\big) .
\end{align}
Plugging \eqref{eq:salzburg} into \eqref{eq:linz2}, we obtain
\begin{align} \label{eq:graz}
&\mathbb{P}  \left(\bigcap_{ i = 0}^{\ell -1 } \bigcap_{ j =1}^{ p_i}   \{J(u_{i,j}) > t_{i,j} \}  \right) \nonumber \\
&\leq \left(   \mathbb{E} \left[ e^{ |\vec{p}\,|q^{m}K_\infty} \right]   + C_{\vec{p},\, q,t_0} e^{ -(1/q-1)\gamma n }  \mathbb{E} \left[ e^{ ( |\vec{p}\,|q^{m} + (1/q - 1)q^{m} ) K_\infty} \right]  \right) \prod_{ i = 0}^{ \ell-1} \prod_{ j = 1}^{p_i}  \big(e^{ - t_{i,j} - \gamma(n+i)} / \varphi_\infty(q) \big)\nonumber\\
&\leq \left( 1    + C_{\vec{p},\, q,t_0} e^{ -(1/q-1)\gamma n }  \right) \mathbb{E} \left[ e^{  |\vec{p}\,| q^{m-1} K_\infty} \right]  \prod_{ i = 0}^{ \ell-1} \prod_{ j = 1}^{p_i} \big( e^{ - t_{i,j} - \gamma(n+i)} / \varphi_\infty(q) \big),
\end{align}
where the final inequality above follows from the fact that both $|\vec{p}\,| q^{ m }$ and $|\vec{p}\,|q^{m} + (1/q - 1)q^{ m}$ are bounded from above by $|\vec{p}\,| q^{m-1}$. Now by Lemma \ref{lem:tails}, there is a constant $C_q\in(0,\infty)$ such that whenever $\theta \leq 1/2$, we have \begin{equation*}
	\mathbb{E} \left[ e^{ \theta K_\infty } \right] \leq 1 + C_q \theta .
\end{equation*} 
In particular, provided that $m\in\N$ is sufficiently large so that $|\vec{p}\, |q^{m-1} \leq 1/2$, using this inequality in \eqref{eq:graz}, we obtain
\begin{align*}
&\mathbb{P}  \left(\bigcap_{ i = 0}^{\ell -1 } \bigcap_{ j =1}^{ p_i}   \{J(u_{i,j}) > t_{i,j} \}  \right)  \\
&\leq \left( 1    + C_{\vec{p},\, q,t_0} e^{ -(1/q-1)\gamma n } \right) \left( 1  + C_{q} |\vec{p}\,| q^{m-1} \right)  \prod_{ i = 0}^{ \ell-1} \prod_{ j = 1}^{p_i}  \big(e^{ - t_{i,j} - \gamma(n+i)} / \varphi_\infty(q)\big)\\
&\leq ( 1 + \varepsilon_{n,m} ) \prod_{ i = 0}^{ \ell-1} \prod_{ j = 1}^{p_i} \big( e^{ - t_{i,j} - \gamma(n+i)} / \varphi_\infty(q)\big),
\end{align*} 
where $\varepsilon_{n,m} = C_{\vec{p},\, q,t_0}\left( e^{ - (1/q - 1) \gamma n } + q^{m-1} \right)$ for a sufficiently large constant $C_{\vec{p},\, q,t_0}\in(0,\infty)$.
\end{proof}

\subsection{Bounding exceedance probabilities of closely related tuples}\label{sec:Closely}
\begin{lemma} \label{lem:2bound}
Let $w$ and $w'$ be distinct elements in $\T = \bigcup_{ n \in \N} \T_n$. Then there exists a constant $C_q\in(0,\infty)$ such that for all $L>0$
\begin{align*}
\mathbb{P} \left( K(w) > L , K(w') > L \right) \leq C_q \exp \left( - \lambda_q L \right),
\end{align*}
where $\lambda_q :=\min \left\{ \frac{1}{q} , 2 - q \right\}$.
\end{lemma}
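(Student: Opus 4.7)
My plan is to decompose via the most recent common ancestor. Let $u := w \wedge w'$ with depth $d := |u|$, and set $a := |w|-d$, $b := |w'|-d$; since $w \neq w'$, $a+b \geq 1$. Splitting the representation \eqref{repofK} at depth $d$ and noting that $w, w'$ access disjoint exponentials strictly below $u$ gives the decomposition
\begin{equation*}
K(w) = q^a K(u) + B_w, \qquad K(w') = q^b K(u) + B_{w'},
\end{equation*}
where $K(u), B_w, B_{w'}$ are mutually independent, with $B_w \stackrel{d}{=} K_{a-1}$ when $a \geq 1$ (and $B_w = 0$ when $a = 0$), and likewise for $B_{w'}$; each is stochastically dominated by $K_\infty$, whose moment generating function $\E[e^{\mu K_\infty}] = \prod_{i \geq 0}(1-\mu q^i)^{-1}$ is finite iff $\mu < 1$. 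I will treat $\min(a,b) \geq 1$ and $\min(a,b) = 0$ separately.

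\emph{The symmetric case $\min(a,b) \geq 1$:} Here $q^a+q^b \leq 2q$, and I would apply a Chernoff bound with $\theta := 1-q/2 \in (0,1)$, using independence to factorise:
\begin{equation*}
\P(K(w) > L,\, K(w') > L) \leq e^{-2\theta L}\, \E[e^{\theta(q^a+q^b) K(u)}]\, \E[e^{\theta B_w}]\, \E[e^{\theta B_{w'}}].
\end{equation*}
Since $\theta < 1$ and $\theta(q^a+q^b) \leq q(2-q) < 1$ (the last inequality being equivalent to $(1-q)^2 > 0$), each expectation is bounded by a $q$-dependent constant, giving $\P(K(w) > L, K(w') > L) \leq C_q\, e^{-(2-q)L}$.

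\emph{The ancestor case $\min(a,b) = 0$:} WLOG $a = 0$, so $u = w$, $b \geq 1$, and $K(w') = q^b K(w) + B_{w'}$ with $B_{w'}$ independent of $K(w)$. My plan is to condition on $K(w) = y$ and split the resulting integral at $y = L/q^b$:
\begin{equation*}
\P(K(w) > L,\, K(w') > L) = \int_L^{L/q^b} f_{K(w)}(y)\,\P(B_{w'} > L - q^b y)\,dy + \P(K(w) > L/q^b).
\end{equation*}
By Lemma~\ref{lem:tails}, the second summand is at most $C_q e^{-L/q^b} \leq C_q e^{-L/q}$. For the integral, the density bound $f_{K(w)}(y) \leq C_q e^{-y}$ and the tail bound $\P(B_{w'} > s) \leq C_q e^{-s}$ (both from Lemma~\ref{lem:tails} / \eqref{crude}) give
\begin{equation*}
\int_L^{L/q^b} f_{K(w)}(y)\,\P(B_{w'} > L - q^b y)\,dy \leq C_q^2\, e^{-L}\int_L^{L/q^b} e^{-(1-q^b)y}\,dy \leq \frac{C_q^2}{1-q}\, e^{-L(2-q)},
\end{equation*}
using $b \geq 1$ (so $q^b \leq q$) in the final step. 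Thus $\P(K(w) > L, K(w') > L) \leq C_q'\bigl(e^{-L(2-q)} + e^{-L/q}\bigr)$. Since $(1-q)^2 \geq 0$ gives $2-q \leq 1/q$ for $q \in (0,1)$, both cases yield the claimed bound with $\lambda_q = \min(1/q,\, 2-q)$.

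I expect the ancestor case to be the main obstacle: a \emph{symmetric} Chernoff with equal exponents would require $\theta(1+q) < 1$, producing only the weaker rate $2/(1+q) < 2-q$. Recovering the sharp exponent $2-q$ requires exploiting the density estimate $f_{K_n}(y) \leq C_q e^{-y}$ from Lemma~\ref{lem:tails} rather than only the tail bound on $K(w)$, which is what the truncation at $y = L/q^b$ is designed to allow.
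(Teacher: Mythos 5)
Your proposal is correct. It reaches the same exponent $\lambda_q = \min\{1/q, 2-q\}$ and the ancestor-case argument (condition on the ancestor's value $K(u)$, use the density bound $f(y)\leq C_q e^{-y}$, and truncate at $y = L/q^b$ to isolate the $e^{-L/q}$ contribution) is essentially the paper's argument. Where you diverge is the symmetric case $\min(a,b)\geq 1$: you handle it separately by a Chernoff bound at $\theta = 1 - q/2$, whereas the paper does not split into cases at all. The paper instead notes $\max\{c,c'\}\geq 1$, WLOG $c'\geq 1$, and then replaces $q^c$ by the cruder bound $1$ (and $q^{c'}$ by $q$, $K_n$ and the increments by i.i.d.\ copies of $K_\infty$), which reduces every configuration of $w,w'$ to a single integral $\int_0^\infty f_\infty(s)\,\P(K_\infty > L-qs)\,\P(K_\infty > L - s)\,ds$ that they split at $s=L$ and $s=L/q$ — exactly the computation you do in your ancestor case. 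So your Chernoff route is a genuine alternative for one sub-case; it is perhaps slightly cleaner there (no integral to estimate) but costs you a case split, while the paper's choice to bound $q^c\leq 1$ lets one computation absorb both cases. Your closing remark — that a symmetric Chernoff in the ancestor case only yields rate $2/(1+q) < 2-q$, so the density-plus-truncation argument is genuinely needed there — is accurate and matches why the paper also relies on the density bound rather than pure exponential moments. One small point in your favour: you correctly set $B_w = 0$ when $a=0$, whereas the paper's formula $K(w) = q^c K_n + \tilde K_c$ with $\tilde K_c \sim K_c$ slightly over-counts when $c=0$ (harmless since they only use it as an upper bound, but your version is cleaner).
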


\begin{proof}
Let $v=w \wedge w'$ be the most recent common ancestor of $w$ and $w'$, so that $v$ is in generation $n$ and $w$ and $w'$ are in generations $n + c$ and $n + c'$ respectively. Since $w \neq w'$, we have $\max\{c, c'\} \geq 1$. Without loss of generality, we can assume that $c'\geq 1$. Then, taking into account \eqref{recrepofK},
\begin{align*}
\mathbb{P} \left( K(w) > L , K(w')  > L \right) &= \mathbb{P} \left( q^c K_n + \tilde{K}_c > L , q^{c '} K_n + \bar{K}_{c'} > L \right),
\end{align*}
where $\tilde{K}_c$, $\bar{K}_{c'}$ and $K_n$ are independent, and $\tilde{K}_c$ is distributed as $K_c$, $\bar{K}_{c'}$ is distributed as $K_{c'}$.
Taking a rather generous bound using the facts that $q \leq 1$,  and that $K_n$ is stochastically dominated by $K_\infty$, we  have
\begin{align*}
\mathbb{P} \left( K(w) > L , K(w')  > L \right) \leq \mathbb{P} \left(  K_\infty + \tilde{K}_\infty > L , qK_\infty  + \bar{K}_\infty > L \right),
\end{align*} 
where $K_\infty, \tilde{K}_\infty$ and $ \bar{K}_\infty$ are i.i.d., recalling \eqref{def:K_infty}.
By conditioning on the value of $K_\infty$, we have 
\begin{align*}
\mathbb{P} \left(  K_\infty + \tilde{K}_\infty > L , qK_\infty  + \bar{K}_\infty > L \right)= \int_0^\infty f_\infty(s) \mathbb{P} \left ( K_\infty > L - q s \right)\mathbb{P} \left ( K_\infty > L - s \right) \mathrm{d} s.
\end{align*}
Due to \eqref{crude} and Lemma \ref{lem:tails}, there is a constant $C_q\in(0,\infty)$ such that $f_\infty(s) \leq C_q e^{ - s}$ and $\mathbb{P} \left( K_\infty > M \right) \leq C_q e^{ - M_+ }$, and we obtain 
\begin{align*}
\mathbb{P} \left( K_\infty + K'_\infty > L , qK_\infty  + K_\infty'' > L \right) &\leq C_q  \int_0^\infty \exp \left( - s -  (L - qs)_+ -(L-s)_+ \right) \mathrm{d} s\\
 &\leq C_q \left[ \int_0^{L} e^{  - 2L+qs } \mathrm{d} s + \int_L^{L/q} e^{ - s - (L-qs) } \mathrm{d} s + \int_{ L/q}^\infty e^{ - s} \mathrm{d} s \right]\\
&\leq C_q \exp \left( - \min \left\{ \frac{1}{q} , 2 - q \right\} L \right)
\end{align*}
for a sufficiently large constant $C_q\in(0,\infty)$. This proves the claim.
\end{proof}

\begin{proof}[Proof of Lemma \ref{lem:close}]
Let $\textbf{u}= \left(u_{i,j} : 0 \leq i \leq \ell - 1, 1 \leq j \leq p_i \right)$ be a $\nu$-closely related tuple in generation $n-m$. Since $\nu < p_0 + \ldots + p_{ \ell - 1}$, by the pigeonhole principle, there exists an element $v_0$ of generation $n-m$ that has more than one descendent among the set $\left\{ u_{i,j} : 0 \leq i \leq \ell -1 , 1 \leq j \leq p_i \right\}$. Let $(w,w')$ be any two distinct members of the tuple $\textbf{u}$ that are descendants of $v_0$. Let $\{v_1,\ldots,v_{\nu-1}\}$ be the other $\nu - 1$ ancestors of $\textbf{u}$ in generation $n-m$, and for each $1 \leq i \leq \nu-1$, let $w_i$ be an 
element of $\textbf{u}$ which has ancestor $v_i$.
Define integers $s,s',s_1,\ldots,s_{\nu-1} \in \{0,1,\ldots, \ell - 1\}$ to be the generations such that $w \in \T_{n+s}, w' \in \T_{n+s'}, w_i \in \T_{n+s_i}$. 
For $t_0 = \min \{ t_{i,j} \}$, we have the simple relation
\begin{align*} 
\bigcap_{ i = 0}^{\ell -1 } \bigcap_{ j =1}^{ p_i}   \{J(u_{i,j}) > t_{i,j} \}  \subseteq \{ J(w) > t_0 \} \cap \{ J(w') > t_0 \} \cap \bigcap_{ i = 1}^{\nu - 1} \{ J(w_i) > t_0 \} =: A_0 \cap\bigcap_{ i = 1}^{\nu - 1} A_i,
\end{align*}
where $A_0 :=  \{ J(w) > t_0 \} \cap \{ J(w') > t_0 \} $ and $A_i := \{ J(w_i) > t_0 \}$. In particular, we have the rather generous upper bound on the exceedance probability
\begin{align} \label{eq:genrep}
\mathbb{P} \left( \bigcap_{ i = 0}^{\ell -1 } \bigcap_{ j =1}^{ p_i}   \{J(u_{i,j}) > t_{i,j} \} \right)  \leq \mathbb{P} \left( A_0 \cap\bigcap_{ i = 1}^{\nu - 1} A_i \right).
\end{align}
For integers $N$, let $\mathcal{F}_N := \sigma \left( J(v) : v \in \T_i , i \leq N \right)$. We note that the events $A_0,\ldots,A_{\nu-1}$ are conditionally independent given $\mathcal{F}_{n-m}$, each $A_i$ conditionally depending only on $J(v_i) = K(v_i) - \gamma |v_i |$. In particular,
\begin{align} \label{eq:erep}
\mathbb{P} \left( A_0 \cap\bigcap_{ i = 1}^{\nu - 1} A_i \right) = \mathbb{E} \left[ \mathbb{P} \left( \left. A_0 \cap\bigcap_{ i = 1}^{\nu - 1} A_i \right| \mathcal{F}_{n-m} \right) \right] =   \mathbb{E} \left[ \prod_{ i = 0}^{\nu - 1} \psi_i \left( K(v_i) \right) \right],
\end{align}
where $\psi_i (x) := \mathbb{P} \left( A_i | K(v_i) = x \right)$. We now obtain effective upper bounds on the functions $\psi_i(x)$, first looking at the case $i \geq 1$, and then treating the $i = 0$ case separately.

For $i \geq 1$, using the definition of $(J(v))_{v \in \T}$ for the second equality below, 
\begin{align*}
\psi_i(x) &:=\mathbb{P} \left( J(w_i) > t_0 | K(v_i) = x \right)\\
&= \mathbb{P} \left( K(w_i) > t_0 + \gamma(n + s_i ) | K(v_i) = x \right)\\
& \leq  \mathbb{P} \left( K(w_i) > L_0 | K(v_i) = x \right),
\end{align*}
where $L_0 := t_0 + \gamma n \leq  t_0 + \gamma(n + s_i ) $. Now, continuing this calculation, we use the definition of the rescaled expanding branching random walk $(K(v))_{v \in \T}$ to obtain the equality below.  We take further generous bounds to obtain the following inequality in the second line below, and then the tail bound Lemma~\ref{lem:tails} to obtain the inequality in the third line below, yielding 
\begin{align*}
\mathbb{P} \left( K(w_i) > L_0 | K(v_i) = x \right) &= \mathbb{P} \left( q^{ s_i + m } x + K_{s_i + m} > L_0 \right)\\
&\leq \mathbb{P} \left(  K_\infty > L_0 - q^m x \right) \\
&\leq C_q \exp \left( - (L_0 - q^m x ) \right)\frac{1}{\varphi_\infty(q)}.
\end{align*}
In summary, for each $1 \leq i \leq \nu - 1$ we have 
\begin{align} \label{eq:bound1}
\psi_i(x) \leq C_q \exp \left( - (L - q^m x ) \right) ,
\end{align}
where $L = t_0 + \gamma n - \log \varphi_\infty(q)> L_0$. 
We now turn to estimating $\psi_0(x)$ using Lemma \ref{lem:2bound}. Indeed, using Lemma \ref{lem:2bound} to obtain the inequality below we have, provided $L_0 > 0$,
\begin{align} \label{eq:bound0}
\psi_0(x)  = \mathbb{P} \left( K(w) > L_0 , K(w') > L_0 | K(v_0) = x \right)  \leq C_q \exp \left( - \lambda_q (L_0 - q^m x  ) \right).
\end{align} 
Combining \eqref{eq:genrep} with \eqref{eq:erep}, and then using the bounds \eqref{eq:bound1} and \eqref{eq:bound0}, we have
\begin{align*}
\mathbb{P} \left( \bigcap_{i,j} \{ J(u_{i,j}) > t_{i,j} \} \right) \leq C_q e^{ - (\nu - 1 + \lambda_q) L_0 }  \mathbb{E} \left[\exp \left(  \lambda_q q^j K(v_0) + q^j \sum_{ i = 1}^{ \nu - 1 } K(v_i) \right) \right].
\end{align*}
Let $m\in\N$ be sufficiently large so that $ \lambda_q \nu q^m < 2 \nu q^m < 1/2$ holds. We have
\begin{align*}
 \mathbb{E} \left[\exp \left(  \lambda_q q^j K(v_0) + q^j \sum_{ i = 1}^{ \nu - 1 } K(v_i) \right) \right] \leq 
\mathbb{E} \left[\exp \left(\frac{1}{2 \nu} \sum_{ i = 0}^{ \nu - 1 } K(v_i) \right) \right]  \leq \mathbb{E} \left[ e^{ \frac{1}{2} K_\infty } \right] = C_q,
\end{align*} 
where we used the fact that $\exp\left(\frac{1}{n} \sum\limits_{i=0}^{n-1} a_i\right) \leq \frac{1}{n}\sum\limits_{i=0}^{n-1} \exp(a_i)$ for the second inequality.
In particular, we have
\begin{align*}
\mathbb{P} \left( \bigcap_{i,j} \{ J(u_{i,j}) > t_{i,j} \} \right) \leq C_q e^{ - (\nu - 1 + \lambda_q) L_0 }. 
\end{align*}
Since $L_0 = t_0 + \gamma n$, the claim follows.
\end{proof}

\section{Left tails for the geometric sum of Exponentials}\label{sec:smalls}

{
In this section we work towards proving Theorem \ref{thm:smalls}. Before presenting the proof of Theorem~\ref{thm:smalls}, we will give two preliminary lemmas.}

\begin{lemma}\label{lem:SimplexBound}  
For all $m\in\N$ and $s \geq 0$, we have 
\begin{equation} \label{eq:SimplexBound}
\frac{ s^m}{ m!} q^{ - m(m-1)/2} \exp \left( - \frac{ s q^{ - m} }{(q^{-1}-1)m} \right)\leq \mathbb{P} \left( K_{m-1} \leq s \right) \leq \frac{ s^m}{ m!} q^{ - m(m-1)/2} .
\end{equation}
\end{lemma}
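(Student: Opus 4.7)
The plan is to compute $\mathbb{P}(K_{m-1}\leq s)$ as an explicit integral over a simplex via a linear change of variables, and then to obtain the two-sided bound by trivially estimating the exponential from above by $1$, and from below by Jensen's inequality applied to the uniform measure on the simplex.

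More concretely, since $K_{m-1}=\sum_{i=0}^{m-1}q^iW_i$ with $W_i$ i.i.d.\ standard exponentials, I would first write
\begin{equation*}
\mathbb{P}(K_{m-1}\leq s)=\int_{\{w_i\geq 0,\ \sum q^iw_i\leq s\}}e^{-\sum_{i=0}^{m-1}w_i}\,dw_0\cdots dw_{m-1},
\end{equation*}
and then make the change of variables $u_i=q^iw_i$, whose Jacobian is $\prod_{i=0}^{m-1}q^{-i}=q^{-m(m-1)/2}$. This yields
\begin{equation*}
\mathbb{P}(K_{m-1}\leq s)=q^{-m(m-1)/2}\int_{\Delta_s}\exp\Bigl(-\sum_{i=0}^{m-1}q^{-i}u_i\Bigr)\,du,
\end{equation*}
where $\Delta_s:=\{u\in\mathbb{R}_+^m:\sum_{i=0}^{m-1}u_i\leq s\}$ has Lebesgue volume $s^m/m!$.

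For the upper bound in \eqref{eq:SimplexBound}, the integrand is bounded by $1$ and the claim follows at once.

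For the lower bound, I would use Jensen's inequality for the convex function $x\mapsto e^{-x}$ applied to the uniform probability measure $\mu$ on $\Delta_s$. This gives
\begin{equation*}
\frac{1}{|\Delta_s|}\int_{\Delta_s}\exp\Bigl(-\sum_{i=0}^{m-1}q^{-i}u_i\Bigr)du\ \geq\ \exp\Bigl(-\sum_{i=0}^{m-1}q^{-i}\mathbb{E}_\mu[u_i]\Bigr).
\end{equation*}
By the symmetry of $\mu$ (realising $\Delta_s$ as the projection of the standard simplex in $\mathbb{R}^{m+1}$ with coordinate sum $s$), one has $\mathbb{E}_\mu[u_i]=s/(m+1)$ for each $i$, so
\begin{equation*}
\sum_{i=0}^{m-1}q^{-i}\,\mathbb{E}_\mu[u_i]\ =\ \frac{s}{m+1}\cdot\frac{q^{-m}-1}{q^{-1}-1}\ \leq\ \frac{s\,q^{-m}}{(q^{-1}-1)\,m},
\end{equation*}
using $m+1\geq m$ and $q^{-m}-1\leq q^{-m}$. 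Multiplying by $|\Delta_s|=s^m/m!$ and the Jacobian factor $q^{-m(m-1)/2}$ yields the claimed lower bound. I do not expect any real obstacle here: the only slightly delicate step is identifying $\mathbb{E}_\mu[u_i]$, which is a standard Dirichlet computation.
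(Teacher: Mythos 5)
Your proof is correct and follows essentially the same route as the paper: express $\mathbb{P}(K_{m-1}\le s)$ as $q^{-m(m-1)/2}$ times an integral over the $s$-scaled standard simplex, bound the integrand above by $1$, and below via Jensen applied to the uniform measure. One small point in your favour: the paper asserts $\mathbb{E}[\zeta_i]=s/m$ for a point uniform on $\Delta_s\subset\mathbb{R}^m$, whereas the correct value is $s/(m+1)$, as you compute; this is harmless for the paper (since $s/(m+1)\le s/m$ the stated inequality still holds), and your derivation via $m+1\ge m$ and $q^{-m}-1\le q^{-m}$ lands on the same bound cleanly.
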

\begin{proof} From the definition \eqref{eq:2:kndef} of $K_{m-1}$, we see that
\begin{equation*}
\mathbb{P}\left( K_{m-1} \leq s \right) = \int_{\mathbb{R}_+^{m}} 1_{\{u_0 + u_1+\cdots u_{m-1} \leq s\}} \prod_{i=0}^{m-1} q^{-i} \exp(-q^{-i}u_i) \mathrm{d} u_1  \mathrm{d} u_2 \cdots  \mathrm{d} u_m
\end{equation*} holds for all $s \geq 0$. The integral is taken over the $s$-scaled unit $m$-simplex of volume $s^m/m!$ (see \eqref{eq:s-scaled simplex}),
\begin{align*}
s \Delta^m = \left\{ (u_0,\ldots,u_{m-1})\,:\, u_i \geq 0,\, \sum_{ i = 0}^{m-1} u_i \leq s \right\}.
\end{align*}
In particular, we may write
\begin{align*}
\mathbb{P} \left( K_{m-1} \leq s \right) = \frac{ s^m}{ m!} q^{ - m(m-1)/2}  \mathbb{E} \left[ \exp \left( -  \sum_{ i = 0}^{m-1} q^{-i} \zeta_i \right) \right],
\end{align*}
where $(\zeta_0,\ldots,\zeta_{m-1})$ is a random vector uniformly distributed on $s \Delta^m$. 
The upper bound in \eqref{eq:SimplexBound} follows from the simple estimate 
$\mathbb{E} \left[ \exp \left( - s \sum_{ i = 0}^{m-1} q^{-i} \zeta_i \right) \right] \leq 1$. To prove the lower bound note that by Jensen's inequality, we have
\begin{align*}
\mathbb{E} \left[ \exp \left( -  \sum_{ i = 0}^{m-1} q^{-i} \zeta_i \right) \right] \geq \exp \left( - \mathbb{E} \left[  \sum_{ i = 0}^{m-1} q^{-i} \zeta_i \right] \right) .
\end{align*}
The lower bound in \eqref{eq:SimplexBound} now follows from noting that, since $\mathbb{E}[ \zeta_i ] = s/m$ for each $i\in\{0,\dots,m-1\}$, we have 
\begin{align*}
\mathbb{E} \left[  \sum_{ i = 0}^{m-1} q^{-i} \zeta_i \right]  = \frac{s}{m} \frac{q^{-m} - 1}{ q^{ - 1} - 1} \leq \frac{s}{m} \frac{q^{-m}}{ q^{-1} - 1}.
\end{align*}
\end{proof}
We emphasize that thanks to Lemma \ref{lem:SimplexBound}, it can be seen that whenever the quantity $\frac{ s q^{ - m}}{ (q^{-1} - 1) m}$ is small, 
the quantity $s^m q^{ - m(m-1)/2}/m!$ is a good estimate for $\mathbb{P} \left( K_{m-1} \leq s \right)$. Our proofs of both the upper and lower bounds in Theorem \ref{thm:smalls} will involve combining monotonicity arguments --namely that for $n \geq m$ $K_n$ stochastically dominates $K_m$ -- with taking an optimal choice of $m$.
For the latter, we have the following lemma, which identifies a critical choice of $m(s)$ so that $s^m q^{ - m(m-1)/2}/m!$ has the order $e^{ - F_q(s)}$. 
\begin{lemma} \label{lem:specialchoice}
For each $s\in(0,1/e^2]$, letting $m(s)$ be the smallest integer greater than $\kappa \left( \log \frac{1}{s} + \log \log \frac{1}{s} \right)$, we have 
\begin{align*}
\frac{1}{C_q} \exp \left( - F_q(s) \right) \leq s^{ m(s) } q^{ - m(s)(m(s)-1)/2} / m(s)! \leq C_q \exp \left( - F_q(s) \right),
\end{align*}
where $F_q(s)$ is as in Theorem \ref{thm:smalls}.
\end{lemma}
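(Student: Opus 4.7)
The plan is to take logarithms and match the coefficients term by term after a careful Stirling expansion. The lemma is equivalent to the estimate
\begin{equation*}
\log\!\left(\frac{s^{m(s)} q^{-m(s)(m(s)-1)/2}}{m(s)!}\right) = -F_q(s) + O_q(1),
\end{equation*}
where the implicit constant depends only on $q$. To organize the computation I would introduce the abbreviations $x := \log\frac{1}{s}$, $y := \log x = \log\log\frac{1}{s}$ (both positive since $s \leq 1/e^2$), and $A := \frac{1}{2\kappa} + \log \kappa - 1$, so that $F_q(s) = \frac{\kappa}{2}(x+y+A)^2 + (\frac{1}{2}+\kappa) y$. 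Writing $m = m(s) = \kappa(x+y) + \delta$ with $\delta \in [0,1)$, and applying Stirling in the form $\log m! = m\log m - m + \frac{1}{2}\log m + O(1)$, the expression I have to analyze becomes
\begin{equation*}
-m x - \log m! + \frac{m(m-1)}{2\kappa}
= \frac{m^2}{2\kappa} - m\!\left(x + \log m + \tfrac{1}{2\kappa} - 1\right) - \tfrac{1}{2}\log m + O(1),
\end{equation*}
using $\log q^{-1} = 1/\kappa$.

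First I would expand $\frac{m^2}{2\kappa}$ directly, obtaining $\frac{\kappa(x+y)^2}{2} + \delta(x+y) + O(1)$. The delicate step is the treatment of $\log m$. Since $\log m = \log\kappa + \log(x+y) + O(1/x)$ and $\log(x+y) = y + \frac{y}{x} + O(y^2/x^2)$, one finds $\log m = \log\kappa + y + \frac{y}{x} + o(1)$. This seemingly negligible $y/x$ correction is crucial: when multiplied by $m = \kappa x + O(y+1)$ it produces an extra $\kappa y + o(1)$ term, which is precisely what distinguishes the coefficient $\tfrac{1}{2}+\kappa$ in $F_q(s)$ from the naive $\tfrac{1}{2}$. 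Using this, I compute
\begin{equation*}
m\!\left(x + \log m + \tfrac{1}{2\kappa} - 1\right) = m(x+y+A) + \kappa y + O(1).
\end{equation*}
Combining with the expansion of $\frac{m^2}{2\kappa}$ and completing the square in $(x+y+A)$ yields
\begin{equation*}
\frac{m^2}{2\kappa} - m\!\left(x + \log m + \tfrac{1}{2\kappa} - 1\right) = -\frac{\kappa}{2}(x+y+A)^2 - \kappa y + O(1),
\end{equation*}
where the $O(1)$ absorbs the residual terms $\frac{\delta^2}{2\kappa} - \delta A + \frac{\kappa A^2}{2}$ along with the $o(1)$ errors from the expansion of $\log m$.

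Finally, $-\frac{1}{2}\log m = -\frac{y}{2} + O(1)$, so altogether
\begin{equation*}
\log\!\left(\frac{s^{m(s)} q^{-m(s)(m(s)-1)/2}}{m(s)!}\right) = -\frac{\kappa}{2}(x+y+A)^2 - \kappa y - \frac{y}{2} + O(1) = -F_q(s) + O(1),
\end{equation*}
which gives the desired two-sided bound. The main obstacle is the bookkeeping of subdominant terms: in particular, one must carry the $y/x$ term in the expansion of $\log(x+y)$ one order further than in a naive asymptotic, since after multiplication by $m \asymp \kappa x$ it contributes the $\kappa y$ piece in the coefficient of the $\log\log\frac{1}{s}$ term of $F_q(s)$. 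Every other contribution is either captured by the quadratic $\frac{\kappa}{2}(x+y+A)^2$ (via completing the square) or is uniformly $O(1)$, using the constraint $\delta \in [0,1)$ and $s \leq 1/e^2$.
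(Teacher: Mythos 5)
Your proposal is correct and follows essentially the same route as the paper's proof: take logarithms, apply Stirling, expand $\log m(s)$ carefully enough to capture the $y/x$ correction (the paper packages this as the quantity $\varepsilon(s) := \log m(s) - \log(\kappa S)$ and uses $\log(1+x) - x = O(x^2)$), and complete the square to produce $F_q(s)$. One small writing nit: when you roll the error in $\log m$ up into ``$o(1)$'' before multiplying by $m \asymp \kappa x$, the reader must remember the error was actually $O(1/x) + O(y^2/x^2)$ (so that $m \cdot (\text{error}) = O(1)$); a bare $m \cdot o(1)$ is not $O(1)$, though your preceding line does record the sharper bound.
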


\begin{proof}
Note that by using the Stirling bounds $
\sqrt{ 2 \pi } m^{m+1/2} e^{ - m} \leq m! \leq e m^{m+1/2} e^{-m}
$, as well as the definition $q = e^{ - 1/\kappa}$, we have
\begin{align*} 
	\frac{1}{C} \exp \left( f(s,m) \right) \leq s^{ m } q^{ - m(m-1)/2} / m!  \leq C \exp \left( f(s,m) \right),
\end{align*}
where for all $x,y>0$
\begin{align}\label{eq:f-defi}
	 f(x,y) := \frac{y^2}{2 \kappa} - \left( \log\frac{1}{x} - 1 + \frac{1}{ 2 \kappa} \right) y - (y + 1/2) \log y.
\end{align}
Using the shorthand $S := \log \frac{1}{s}$, by setting $m(s) := \kappa \left( S + \log S + \delta(s) \right)$ (where $\delta(s) \in (0,1/\kappa]$ is such that $m(s)$ is an integer) a calculation tells us that 
\begin{align*} 
f(s,m(s)) = &- \frac{ \kappa }{2} \left( S + \log S + \delta(s) \right) \left( S + \log S - \delta(s) - 2 + \frac{1}{\kappa } + 2 \log \kappa  + 2 \varepsilon(s) \right) \notag \\
&- \frac{1}{2} \log S - \frac{1}{2} \log \kappa - \frac{1}{2} \varepsilon(s),
\end{align*}
where $\varepsilon(s) := \log m(s) - \log(\kappa S)= \log \left( 1 + \frac{ \log S + \delta(s) }{ S } \right)$. 
Using the fact that $\log(1+x) - x = \mathcal{O}(x^2)$ as $x \rightarrow 0$, we obtain 
\begin{align*} 
\left( S + \log S + \delta(s) \right) \varepsilon(s) = \log S + r(q,s),
\end{align*}
where for each $q$, $r(q,s)$ is bounded in $s \leq 1/e^2$. In particular, by the last two displays, we have
\begin{align*}
f(s,m(s))  = &   - \left( 1/2 + \kappa \right) \log S  \\ &- \frac{\kappa}{2} \left(S + \log S + \delta(s) \right) \left( S + \log S - \delta(s) - 2 + \frac{1}{ \kappa } + 2 \log \kappa \right)S + r'(q,s),
\end{align*} 
where $r'(q,s)$ is uniformly  bounded in $s \leq 1/e^2$. Using the identity $(x + a)(x + b) = (x + \frac{a+b}{2} )^2  - \left( \frac{a-b}{2} \right)^2$, we obtain
\begin{align*}
f(s,m(s)) =  - \left( \frac{1}{2} + \kappa \right) \log S - \frac{\kappa}{2} \left( S + \log S + \frac{1}{ 2 \kappa } + \log \kappa - 1 \right)^2  + r''(q,s)
\end{align*}
for a $r''(q,s)$ uniformly bounded in $s \leq 1/e^2$.
This completes the proof.
\end{proof}


\begin{proof}[Proof of Theorem \ref{thm:smalls}]
Let $n\in\N$ be such that $n \geq \kappa \left( \log\frac{1}{s} + \log \log \frac{1}{s} \right)$. Then by construction, $n$ is at least $m(s)$ for all $n$ sufficiently large. Hence, by stochastic domination, we have 
\begin{align*}
\mathbb{P} \left( K_{n-1} \leq s \right) \leq \mathbb{P} \left( K_{m(s) - 1} \leq s \right).
\end{align*}
It then follows from the upper bound in Lemma \ref{lem:SimplexBound} and the upper bound in Lemma \ref{lem:specialchoice} that for every $s \leq 1/e^2$, 
\begin{align*}
\mathbb{P} \left( K_{n-1} \leq s \right) \leq \mathbb{P} \left( K_{m(s) - 1} \leq s \right) \leq C_q e^{ - F_q(s) },
\end{align*}
completing the proof of the upper bound in \eqref{eq:smalls}.\\

We now turn to proving the more difficult lower bound in \eqref{eq:smalls}.
Since $K_\infty$ stochastically dominates $K_{n-1}$ for every $n\in\N$, it is sufficient to prove the lower bound for $ n = \infty$. To this end, with $m(s)$ as in Lemma \ref{lem:specialchoice}, iterating~\eqref{eq:2:rde} $m(s)$ times yields
\begin{align*}
K_\infty \stackrel{(d)}{=} K_{m(s) - 1} + q^{m(s)} K_\infty, \qquad \mbox{$K_{m(s)-1}$ independent from $K_{\infty}$.}
\end{align*}
Our strategy is as follows. For a carefully chosen $\varepsilon(s)>0$, we use the bound
\begin{align} \label{eq:t4}
\mathbb{P} \left( K_\infty \leq s \right) \geq \mathbb{P} \left( K_{m(s)-1} \leq  \left(1 -  \varepsilon(s) \right)s \right) \mathbb{P} \left( q^{m(s)} K_\infty \leq \varepsilon(s) s \right).
\end{align}
It transpires that the best choice of $\varepsilon(s)$ to be taken is so that $\varepsilon(s) s q^{ - m(s)}$ has unit order. Indeed, with $S = \log \frac{1}{s}$ as above, set 
\begin{align*}
\varepsilon(s) := 1/S.
\end{align*} 
Using again $m(s) = \kappa \left( \log \frac{1}{s} + \log \log \frac{1}{s} + \delta(s) \right)$, a calculation tells us that
\begin{align*}
\varepsilon(s) s q^{ - m(s)} = e^{ \delta(s)  } \geq 1,
\end{align*} 
for every $s$, so that in particular
\begin{align*}
 \mathbb{P} \left( q^{m(s)} K_\infty \leq \varepsilon(s) s \right) = \mathbb{P} \left( K_\infty \leq \varepsilon(s) s q^{ - m(s) } \right) 
 \geq \mathbb{P} \left( K_\infty \leq 1 \right) \geq C_q.
\end{align*} 
Moreover, by \eqref{eq:t4} with $\varepsilon(s) = 1/S$, we have 
\begin{align} \label{eq:combo}
\mathbb{P} \left( K_\infty \leq s \right) \geq C_q \mathbb{P} \left( K_{m(s)-1} \leq  \left(1 -  \varepsilon(s) \right)s \right) .
\end{align}
By Lemma \ref{lem:SimplexBound}, we can write
\begin{align} \label{eq:lowersimplex}
\mathbb{P} \left( K_{m-1} \leq s \right) \geq C_q \exp \left( f(s,m) - g(s,m ) \right), 
\end{align} 
where $f(s,m)$ is given as in \eqref{eq:f-defi}, and 
\begin{align*}
g(s,m) := \frac{ e^{ \kappa^{-1} m } s }{ (q^{-1}- 1) m }  .
\end{align*} 
Set $w(s) := (1 - \varepsilon(s)) s$ and let $m(s) = \kappa \left( S + \log S + \delta(s) \right)$ be defined as above. A calculation yields
\begin{align*} 
g( w(s), m(s) ) \leq \frac{ e^{ \delta(s)}}{ q^{-1} - 1 } \leq C_q  .
\end{align*} 
We now turn to computing $f(w(s),m(s))$. Again, a calculation similar to the one in the proof of Lemma \ref{lem:specialchoice} tells us that 
\begin{align*} 
f( w(s) , m (s)) = F_q(s) + r(q,s)
\end{align*}
where for each $q$, $r(q,s)$ is bounded uniformly in $s \leq 1/e^2$. In particular, we see that the difference between $f(w(s),m(s))$ and $f(s,m(s))$ is bounded. Using \eqref{eq:lowersimplex}, we have that
\begin{align*}
\mathbb{P} \left( K_{m(s) - 1} \leq w(s) \right) \geq C_q \exp \left( - F_q(s) \right) ,
\end{align*} 
and by \eqref{eq:combo} 
\begin{align*}
\mathbb{P} \left( K_\infty \leq s \right) \geq C_q \exp \left( - F_q(s) \right)  .
\end{align*}
This completes the proof of the lower bound in \eqref{eq:smalls}.
\end{proof}

\section{The leftmost particles in the rescaled expanding branching random walk } \label{sec:smallest}
\subsection{Three preliminary estimates}

We now give three simple estimates on the random variables $(K(v))_{v \in \T}$, which will be used in the proof of Theorem~\ref{thm:smallestnew}. Combined with the estimates on the lower tails of $K(v)$ from Theorem \ref{thm:smalls},  this will allow us to determine a sharp concentration of the size of the smallest fragment {in Theorem \ref{thm:smallest}.}
The following lemma gives an upper bound on the joint tails of $K(v)$ and $K(w)$, which will be useful when $w$ is close to $v$.

\begin{lemma}\label{lem:Decoupling1Segment} Let $v,w \in \T_n$ and suppose that $|v \wedge w| = n-m-1$ for some $m\geq 0$. Then, for all $x\geq 0$, we have that
\begin{equation*}
\P \left( K(v) \leq x, K(w) \leq x\right) \leq \P\left( K_n \leq x \right) \P\left( K_m \leq x \right)  .
\end{equation*}
\end{lemma}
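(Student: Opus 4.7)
The plan is to decompose $K(v)$ and $K(w)$ into a shared part coming from their common ancestors and two conditionally independent residual parts, then use a simple monotonicity argument after conditioning on the shared part.

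Concretely, using the representation \eqref{repofK} and the hypothesis $|v \wedge w| = n-m-1$ (so that $v_i = w_i$ for $0 \leq i \leq n-m-1$ and $v_i \neq w_i$ for $n-m \leq i \leq n$), I would write
\begin{equation*}
K(v) = Z + A, \qquad K(w) = Z + B,
\end{equation*}
where
\begin{equation*}
Z := \sum_{i=0}^{n-m-1} q^{n-i} W^{(v_i)}, \qquad A := \sum_{i=n-m}^{n} q^{n-i} W^{(v_i)}, \qquad B := \sum_{i=n-m}^{n} q^{n-i} W^{(w_i)}.
\end{equation*}
Since the exponentials attached to different vertices of $\T$ are independent, $Z$, $A$, $B$ are jointly independent, $A$ and $B$ are identically distributed, and a reindexing $j = n-i$ shows $A \stackrel{d}{=} B \stackrel{d}{=} K_m$ while $Z + A \stackrel{d}{=} K_n$.

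Next I would condition on $Z$ and use the conditional independence of $A$ and $B$:
\begin{equation*}
\P(K(v) \leq x,\, K(w) \leq x) = \E\!\left[ \P(A \leq x - Z \mid Z)\, \P(B \leq x - Z \mid Z) \right] = \E\!\left[ \P(A \leq x - Z \mid Z)^2 \right].
\end{equation*}
Since $Z \geq 0$ implies $\P(A \leq x - Z \mid Z) \leq \P(A \leq x)$ almost surely, one factor in the square can be replaced by the deterministic upper bound $\P(A \leq x)$, giving
\begin{equation*}
\E\!\left[ \P(A \leq x - Z \mid Z)^2 \right] \leq \P(A \leq x)\, \E\!\left[ \P(A \leq x - Z \mid Z) \right] = \P(A \leq x)\, \P(Z + A \leq x),
\end{equation*}
which equals $\P(K_m \leq x)\, \P(K_n \leq x)$ by the distributional identifications above.

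There is essentially no obstacle here: the only subtle point is to set up the decomposition carefully so that $Z+A$ has the correct distribution $K_n$ (not $K_{n-m-1}$) and $A$ has distribution $K_m$ (not $K_{m+1}$); the rest is conditioning together with the trivial monotonicity $\P(A \leq x - Z) \leq \P(A \leq x)$ afforded by $Z \geq 0$.
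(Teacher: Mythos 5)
Your proof is correct and follows essentially the same route as the paper's: both decompose $K(v), K(w)$ (resp.\ $S(v), S(w)$) into a shared non-negative part coming from the common ancestry plus independent residual increments, use non-negativity of the shared part to enlarge one of the two events, and then factorize by independence. The paper phrases this directly in terms of the unrescaled walk $S$ and the event inclusion $\{S(w)\leq t\}\subseteq\{S(w)-S(v\wedge w)\leq t\}$, whereas you condition on the shared part $Z$ and bound one factor of the resulting square, but the content is the same.
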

\begin{proof}
Recall the expanding branching random walk $S = (S(x))_{x \in \T}$ defined in Section~\ref{sec:results}. We have
\begin{align*}
	\P(S(v)\leq t, \: S(w) \leq t) & = \P(S(v) \leq t, \: S(v \wedge w ) + S(w)-S(v \wedge w) \leq t ) \\
		& \leq  \P(S(v) \leq t, \:  S(w)-S(v \wedge w) \leq t ) =  \P(S(v) \leq t) \P(S(w)-S(v \wedge w) \leq t ).
\end{align*}
Now use the fact that $\{q^n S(v), q^nS(w)\} = \{K(v), K(w)\}$, where we have $q^nS(v) \stackrel{d}{=} K_n$ as well as that $S(w)-S(v \wedge w) \stackrel d= q^{-(n-m)}S_m\stackrel d= q^{-n}K_m$, and conclude by substituting $t = q^{-n}x$.

%
%
%
\end{proof}


\begin{lemma}\label{lem:Decoupling2Ancestor} Let $v,w \in \T_n$ and suppose that $|v \wedge w| = n-m-1$ for some $m\geq 0$. Then, for all $s,x\geq 0$, we have
\begin{equation*}
\P \left( K(v) \leq s, K(w) \leq s\right) \leq \P \left( K(v) \leq s\right)\left(\P\left( K(v) \leq s + q^{m+1} x \right) +  \P \left( K(v \wedge w) > x \right) \right).
\end{equation*}
\end{lemma}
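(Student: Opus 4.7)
The plan is as follows. Writing $u := v \wedge w$ for brevity, the branching random walk structure (recall~\eqref{recforS} and the discussion around~\eqref{recrepofK}) gives
\begin{equation*}
K(v) = q^{m+1} K(u) + A, \qquad K(w) = q^{m+1} K(u) + B,
\end{equation*}
where $A,B$ are independent copies of $K_m$, jointly independent of $K(u)\stackrel{d}{=}K_{n-m-1}$. Conditional on $K(u)$, the variables $K(v)$ and $K(w)$ become independent, and letting $F_m$ denote the CDF of $K_m$, this yields the key identity
\begin{equation*}
\P(K(v)\leq s,\,K(w)\leq s) \;=\; \E\bigl[F_m\bigl(s-q^{m+1}K(u)\bigr)^2\bigr].
\end{equation*}

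I would split this expectation depending on whether $K(u)\leq x$ or $K(u)>x$. For the contribution from $\{K(u)>x\}$, I would use $F_m\leq 1$ to bound $F_m^2\leq F_m$, and then apply the FKG (Chebyshev sum) inequality applied to the single variable $K(u)$: since $u\mapsto F_m(s-q^{m+1}u)$ is non-increasing while $\mathbf{1}_{\{u>x\}}$ is non-decreasing, we obtain
\begin{equation*}
\E\bigl[F_m(s-q^{m+1}K(u))\mathbf{1}_{\{K(u)>x\}}\bigr]\;\leq\; \E[F_m(s-q^{m+1}K(u))]\cdot \P(K(u)>x)\;=\;\P(K(v)\leq s)\,\P(K(u)>x),
\end{equation*}
which delivers the second summand on the right-hand side of the claimed inequality.

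For the contribution from $\{K(u)\leq x\}$, the target is
\begin{equation*}
\E\bigl[F_m(s-q^{m+1}K(u))^2\,\mathbf{1}_{\{K(u)\leq x\}}\bigr]\;\leq\; \P(K(v)\leq s)\cdot \P(K(v)\leq s+q^{m+1}x).
\end{equation*}
The starting pointwise input is that on $\{K(u)\leq x\}$, monotonicity of $F_m$ gives $F_m(s-q^{m+1}K(u))\leq F_m(s+q^{m+1}(x-K(u)))$; combined with the identity $\P(K(v)\leq s+q^{m+1}x)=\E[F_m(s+q^{m+1}(x-K(u)))]$ (obtained by conditioning on $K(u)$), this reduces matters to a decoupling estimate of the form
\begin{equation*}
\E\bigl[F_m(s-q^{m+1}K(u))\,F_m(s+q^{m+1}(x-K(u)))\mathbf{1}_{\{K(u)\leq x\}}\bigr]\;\leq\; \E[F_m(s-q^{m+1}K(u))]\cdot \E[F_m(s+q^{m+1}(x-K(u)))].
\end{equation*}

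The main obstacle will be this last factorization. A direct FKG bound goes the wrong way, since both $F_m(s-q^{m+1}\cdot)$ and $F_m(s+q^{m+1}(x-\cdot))$ are non-increasing and hence positively correlated. I would attack this by introducing an independent copy $U'$ of $K(u)$, using the indicator $\mathbf{1}_{\{K(u)\leq x\}}$ together with the pointwise comparison to swap $K(u)$ for $U'$ in one of the factors via a coupling argument. Equivalently, one can work under the tilted measure $\P(\cdot\mid K(v)\leq s)$ and exploit that the conditional law of $K(u)$ is stochastically smaller than its unconditional law, reducing the bound to the statement $\P(K(w)\leq s,\,K(u)\leq x\mid K(v)\leq s)\leq \P(K(v)\leq s+q^{m+1}x)$, which captures the intuition that a small common ancestor contributes at most $q^{m+1}x$ to the position of either descendant.
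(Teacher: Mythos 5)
Your decomposition of the branching structure, $K(v)=q^{m+1}K(u)+A$, $K(w)=q^{m+1}K(u)+B$ with $A,B \stackrel{d}{=} K_m$ independent of $K(u)\stackrel{d}{=}K_{n-m-1}$, is correct, and your treatment of the $\{K(u)>x\}$ contribution (via $F_m^2\leq F_m$ and the one-variable Chebyshev/FKG correlation inequality) is sound. However, the $\{K(u)\leq x\}$ contribution — which you yourself flag as the main obstacle — has a genuine gap, and neither of the workarounds you sketch closes it.

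The reduction to the ``decoupling estimate''
\[
\E\bigl[F_m(s-q^{m+1}K(u))\,F_m(s+q^{m+1}(x-K(u)))\,\mathbf{1}_{\{K(u)\leq x\}}\bigr]\leq \E\bigl[F_m(s-q^{m+1}K(u))\bigr]\,\E\bigl[F_m(s+q^{m+1}(x-K(u)))\bigr]
\]
cannot go through as stated: \emph{all three} factors on the left ($F_m(s-q^{m+1}\cdot)$, $F_m(s+q^{m+1}(x-\cdot))$, and $\mathbf{1}_{\{\cdot\leq x\}}$) are non-increasing in $K(u)$, so the FKG inequality points the \emph{wrong} way even after including the indicator; indeed, the inequality is false in general (take $F_m$ close to a step function). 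Your proposed ``tilted measure'' heuristic is also backwards: conditioning on $\{K(v)\leq s\}$ biases $K(u)$ toward \emph{smaller} values, which makes the events $\{K(w)\leq s\}$ and $\{K(u)\leq x\}$ \emph{more} likely, not less — so the observation that $K(u)$ is stochastically smaller under the conditional law works against the inequality you need, not in its favor.

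The paper's proof sidesteps this correlation issue entirely. It first invokes Lemma~\ref{lem:Decoupling1Segment} to get $\P(K(v)\leq s,K(w)\leq s)\leq\P(K(v)\leq s)\,\P(K_m\leq s)$, where the factor $\P(K_m\leq s)$ involves a single marginal probability with no conditioning left in it. It then introduces a \emph{fresh, independent} pair $(K_m,K_{n-m-1})$, applies the trivial union bound $\P(K_m\leq s)\leq\P(K_m\leq s,K_{n-m-1}\leq x)+\P(K_{n-m-1}>x)$, and uses the deterministic event inclusion $\{K_m\leq s,K_{n-m-1}\leq x\}\subseteq\{K_m+q^{m+1}K_{n-m-1}\leq s+q^{m+1}x\}$. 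Since $K_m+q^{m+1}K_{n-m-1}\stackrel{d}{=}K_n\stackrel{d}{=}K(v)$ and $K_{n-m-1}\stackrel{d}{=}K(v\wedge w)$, the claim follows with no correlation analysis whatsoever. In short, the auxiliary variable being split on in the paper is a new independent copy, not the actual common ancestor $K(u)=K(v\wedge w)$, which is what keeps the argument clean; your split on $K(u)$ itself reintroduces precisely the dependence the lemma is trying to control.
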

\begin{proof} Lemma \ref{lem:Decoupling1Segment} gives
$\P \left( K(v) \leq s, K(w) \leq s\right) \leq \P \left( K(v) \leq s\right)
\P\left( K_{m} \leq s \right)$.
For a pair of independent random variables $(K_{m},K_{n-m-1})$ as defined in \eqref{eq:2:kndef}, set $\widetilde{K}_{n}:= K_{m} + q^{m+1} K_{n-m-1}$. Note that we have
\begin{equation*}
\P\left( K_{m} \leq s \right) \leq  \P\left( K_{m} \leq s , K_{n-m-1} \leq x\right) + \P\left(K_{n-m-1} > x\right) \leq  \P\left( \widetilde{K}_{n} \leq s + q^{m+1} x \right)  + \P\left(K_{n-m-1} > x\right)
\end{equation*} for all $x,s \geq 0$. Since $\widetilde{K}_{n}$ has the same law as $K(v)$, we conclude the proof. 
%
%
\end{proof} The following lemma provides an estimate for the probability that $K(v)$ is contained in a small interval.

\begin{lemma}\label{lem:Decoupling3Continuity} For all $s,z \geq 0$ and $n \in \N$, we have that
\begin{equation*}
\P \left( K_n \in [s,s+z] \right) \leq z \P \left( K_{n-1}\leq  q^{-1}(s+z) \right).
\end{equation*}
\end{lemma}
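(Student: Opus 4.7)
The plan is to exploit the simple recursive decomposition of $K_n$ that comes from the definition \eqref{eq:2:kndef}. Writing $K_n = W_0 + q\bigl(W_1 + q W_2 + \cdots + q^{n-1} W_n\bigr)$ and noting that the bracketed sum has the same law as $K_{n-1}$ and is independent of $W_0$, we obtain the in-distribution identity $K_n \stackrel{d}{=} W_0 + q\,\widetilde{K}_{n-1}$, where $\widetilde{K}_{n-1}$ is independent of $W_0$ and distributed as $K_{n-1}$. This reduces the two-sided bound to a one-dimensional density estimate on $W_0$.

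Next I would condition on $\widetilde{K}_{n-1} = y$. The event $\{K_n \in [s,s+z]\}$ then becomes $\{W_0 \in [s-qy,\,s+z-qy]\}$. Since $W_0 \geq 0$, this event is empty unless $s+z-qy \geq 0$, i.e.\ unless $y \leq q^{-1}(s+z)$. When $y$ does satisfy this inequality, the standard exponential density is bounded by $1$, so the probability that $W_0$ lands in an interval of length $z$ is at most $z$. Combining these two observations gives the pointwise bound
\begin{equation*}
\mathbb{P}\bigl(W_0 \in [s-qy,\,s+z-qy]\bigr) \leq z \cdot \mathbf{1}\bigl\{y \leq q^{-1}(s+z)\bigr\}.
\end{equation*}

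Integrating this bound against the law of $\widetilde{K}_{n-1}$ then yields
\begin{equation*}
\mathbb{P}(K_n \in [s,s+z]) \leq z\,\mathbb{P}\bigl(\widetilde{K}_{n-1} \leq q^{-1}(s+z)\bigr) = z\,\mathbb{P}\bigl(K_{n-1} \leq q^{-1}(s+z)\bigr),
\end{equation*}
which is the claimed inequality. There is no genuine obstacle here; the only minor point requiring care is that one uses the positivity $W_0 \geq 0$ (rather than integrating the density over all of $\mathbb{R}$) to force the constraint $y \leq q^{-1}(s+z)$ that produces the factor $\mathbb{P}(K_{n-1} \leq q^{-1}(s+z))$ on the right.
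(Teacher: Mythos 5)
Your proof is correct and follows essentially the same route as the paper: both use the distributional identity $K_n \stackrel{d}{=} W + q K_{n-1}$ with $W$ an independent standard exponential, and both bound the hitting probability of the length-$z$ interval by $z$ (density $\leq 1$) while using $W \geq 0$ to enforce the constraint $K_{n-1} \leq q^{-1}(s+z)$. The paper phrases this as a general fact about $A+W$ with $A\geq 0$ and $W$ exponential, but the underlying computation is identical to yours.
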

\begin{proof} 
{
Consider the following general fact. If $A$ is any non-negative random variable and $W$ is an independent standard exponential random variable, then
\begin{align*}
\mathbb{P} ( A+ W \in [s, s+z] ) 
\leq z \mathbb{P} ( A \leq s+z).
\end{align*}
The result in question follows from this general fact by noting that $K_n \stackrel{d}{=}  q K_{n-1} + W$ where $W$ is an independent standard exponential. 
}
\end{proof}

\subsection{Second moment method}

In order to prove Theorem \ref{thm:smallestnew}, we will apply the second moment method with respect to the following sum of indicator random variables 
\begin{equation*}
	M_n(s) =   \sum_{v \in \T_n} I_{ \{K(v)\leq s\}}.
\end{equation*} 
We start with the following bound on the expectation of $M_n(s)$.
{Define $z_n:=z_n(\kappa,\gamma)$ as the unique solution to the equation
	\begin{equation}\label{eq:znEquation}
		z_n + \log z_n + \frac{1}{2\kappa} +\log\kappa- 1 = \sqrt{\frac{2\gamma}{\kappa}n}\,.
	\end{equation} 
It is easily verified that
	\begin{equation}\label{eq:znasymptotics}
		z_n = \sqrt{\frac{2\gamma}{\kappa}n} - \frac{1}{2}\log n - \frac{1}{2\kappa}- \frac{1}{2}\log\kappa +1 - \frac{1}{2}\log(2\gamma) + O \left( \frac{ \log n }{ \sqrt{n}} \right)  .
	\end{equation} 
}

\begin{lemma}\label{lem:SecondMomentExpectation} {With $z_n$ as in \eqref{eq:znEquation}, for $n \in \mathbb{N}$ define the quantities}
\begin{equation}\label{def:s+-}
s_n^{-}:= \exp\left( -z_n - z_n^{-1}\log^2 z_n \right) 
\qquad\text{and}\qquad
s^{+}_n:= \exp\left(-  z_n + z_n^{-1}\log^2 z_n \right)
\end{equation} 
Then
\begin{equation*}
 \mathbb{E}\left[ M_n(s_n^{-}) \right] = k^n \P\left(K_n \leq s_n^{-} \right) \leq \frac{1}{n^2}
\qquad\text{and}\qquad
\mathbb{E}\left[ M_n(s_n^{+}) \right] = k^n \P\left(K_n \leq s_n^{+} \right)  \geq n^2
\end{equation*} for all $n\in\N$ sufficiently large.
\end{lemma}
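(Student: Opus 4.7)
\textbf{Proof proposal for Lemma \ref{lem:SecondMomentExpectation}.}
The plan is to invoke Theorem \ref{thm:smalls} and then carry out a careful asymptotic expansion of $F_q(s_n^{\pm})$ using the defining relation \eqref{eq:znEquation} for $z_n$. Set $S := \log(1/s)$ and write $C_1 := \tfrac{1}{2\kappa} + \log\kappa - 1$ so that \eqref{eq:znEquation} reads $z_n + \log z_n + C_1 = \sqrt{2\gamma n/\kappa}$ and
\begin{equation*}
F_q(s) = \frac{\kappa}{2}\bigl(S + \log S + C_1\bigr)^2 + \Bigl(\tfrac{1}{2}+\kappa\Bigr) \log S .
\end{equation*}
Observe that $z_n \sim \sqrt{2\gamma n/\kappa}$ and $\log z_n \sim \tfrac12 \log n$, so that $\log(1/s_n^{\pm}) = z_n \mp z_n^{-1}\log^2 z_n$ is of order $\sqrt{n}$. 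In particular, for $n$ large enough, the hypothesis $n \geq \kappa\bigl(\log\tfrac{1}{s_n^\pm} + \log\log\tfrac{1}{s_n^\pm}\bigr)$ of Theorem \ref{thm:smalls} holds, and therefore
\begin{equation*}
\frac{1}{C_q} \, k^n e^{-F_q(s_n^\pm)} \;\leq\; k^n\,\mathbb{P}(K_n \leq s_n^\pm) \;\leq\; C_q\, k^n e^{-F_q(s_n^\pm)} .
\end{equation*}

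The core of the argument is therefore to expand $\gamma n - F_q(s_n^\pm)$. Take $s = s_n^{-}$, so that $S = z_n + z_n^{-1}\log^2 z_n$. Then $\log S = \log z_n + O(z_n^{-2}\log^2 z_n)$, and combining with \eqref{eq:znEquation},
\begin{equation*}
S + \log S + C_1 \;=\; \sqrt{\tfrac{2\gamma n}{\kappa}} \;+\; z_n^{-1}\log^2 z_n \;+\; O\bigl(z_n^{-2}\log^2 z_n\bigr) .
\end{equation*}
Squaring and using once more that $\sqrt{2\gamma n/\kappa} = z_n(1+o(1))$, one gets
\begin{equation*}
\frac{\kappa}{2}\bigl(S + \log S + C_1\bigr)^2 = \gamma n + \kappa \log^2 z_n + o(\log^2 z_n),
\end{equation*}
and since $(\tfrac12+\kappa)\log S = O(\log z_n) = o(\log^2 z_n)$, one concludes $F_q(s_n^{-}) = \gamma n + \kappa \log^2 z_n + o(\log^2 z_n)$. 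The same expansion with the opposite sign on the correction term yields $F_q(s_n^{+}) = \gamma n - \kappa \log^2 z_n + o(\log^2 z_n)$.

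Substituting these into the two-sided bound above gives
\begin{equation*}
k^n\,\mathbb{P}\bigl(K_n \leq s_n^{-}\bigr) \leq C_q \exp\bigl(-\kappa \log^2 z_n + o(\log^2 z_n)\bigr),
\end{equation*}
\begin{equation*}
k^n\,\mathbb{P}\bigl(K_n \leq s_n^{+}\bigr) \geq \tfrac{1}{C_q}\exp\bigl(+\kappa \log^2 z_n + o(\log^2 z_n)\bigr).
\end{equation*}
Because $\log z_n \sim \tfrac12 \log n$, the exponent $\kappa\log^2 z_n \sim \tfrac{\kappa}{4}\log^2 n$ grows strictly faster than $2\log n$, so the right-hand sides are dominated by $n^{-2}$ and dominate $n^2$ respectively, once $n$ is large enough. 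This gives both claimed bounds.

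The only real obstacle is the bookkeeping of the asymptotic expansion: the quadratic term in $F_q$ cancels $\gamma n$ exactly thanks to \eqref{eq:znEquation}, so the answer is determined by the next-order term, and one has to make sure that the correction $\pm z_n^{-1}\log^2 z_n$ in the definition of $s_n^\pm$ is calibrated precisely so that this next-order term is $\pm \kappa \log^2 z_n$, safely beating the $\log n$ threshold needed to reach $n^{\pm 2}$.
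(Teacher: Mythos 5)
Your proposal is correct and takes essentially the same approach as the paper: you invoke Theorem \ref{thm:smalls} to reduce to estimating $\gamma n - F_q(s_n^\pm)$, use the defining relation \eqref{eq:znEquation} for $z_n$ to make the leading quadratic term cancel $\gamma n$ exactly, and then extract the $\pm\kappa\log^2 z_n$ correction from the cross term, which dominates the $2\log n$ threshold since $\log^2 z_n \asymp \log^2 n$. The paper presents the same computation in the slightly more compact form of an expansion of $F_q(\exp(-z_n + y_n))$ in a generic small perturbation $y_n$, but the substance is identical.
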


\begin{proof} By the definition of $z_n$, we have that $F_q(s)$ given in \eqref{def:functionFqs} satisfies
\begin{equation*}
F_q(\exp(-z_n+y_n))= \left( \frac{1}{2}+\kappa \right)\log z_n + \frac{\kappa}{2}\left( \sqrt{\frac{2\gamma}{\kappa}n}+y_n \right)^{2} + O(y_n)
\end{equation*} for all $(y_n)_{n \in \N}$ with $\lim_{n \rightarrow \infty} y_n = 0$.
Hence, using Theorem \ref{thm:smalls}, we see that
\begin{equation*}
\log\left(k^n\P\left(\log K_n \leq - z_n - z_n^{-1}\log^2 z_n \right) \right) \leq - \frac{1}{2}\log n - \frac{\kappa}{2}\log^2 z_n \leq -2\log n
\end{equation*} for all $n$ large enough. Similarly, we apply Theorem \ref{thm:smalls} to obtain
\begin{equation*}
\log\left(k^n\P\left(\log K_n \geq - z_n + z_n^{-1}\log^2 z_n \right) \right) \geq - \frac{1}{2}\log n + \frac{\kappa}{2}\log^2 z_n \geq 2\log n
\end{equation*} for all $n$ large enough, which concludes the proof.
\end{proof}

We now have all tools to prove Theorem \ref{thm:smallestnew}.

\begin{proof}[Proof of Theorem \ref{thm:smallestnew}] 
{With a view to proving Theorem \ref{thm:smallestnew}, we begin by proving the slightly stronger statement that there exists almost surely an $n_0 \in \mathbb{N}$ such that for all $n \geq n_0$ we have 
	\begin{align}\label{eq:ConvergenceRateKmin}
		\log K^{\min}_n  \in \left[ -z_n - z_n^{-1}\log^2z_n, - z_n +z_n^{-1}\log^2z_n\right]
	\end{align}

To this end, we note} from Lemma \ref{lem:SecondMomentExpectation}, we see that for $s_n^-$ defined in \eqref{def:s+-}, we have
\begin{equation*}
\P\left( \exists  v \in \T_n \colon \log K(v)  \leq \log s_n^-\right) = \P\left(M_n(s_n^-)\geq 1\right)  \leq \mathbb{E}\left[ M_n(s_n^{-}) \right] \leq \frac{1}{n^2},
\end{equation*}
which gives the $\P$-almost sure lower bound on $\log K_{n}^{\min}$ in \eqref{eq:ConvergenceRateKmin}, i.e.\,
$\log K_{n}^{\min}
\geq \log s_n^-$ almost surely for $n\in\N$ large enough. For the corresponding upper bound, we will now estimate 
$\Var(M_n(s))$, where we set $s=s_n^{+}$. Partitioning according to the generation of the most recent common ancestor, we have that
\begin{align*}
\Var(M_n(s)) &= \sum_{v,w \in \T_n} \left( \P\left( K(v) \leq s, K(w) \leq s \right) - \P\left( K(v) \leq s \right)\P\left( K(w) \leq s \right) \right) \\
&\leq \E[M_n(s)] + \sum_{m=1}^{n-1} k^{n+m} \left( \P\left( K(v_m) \leq s, K(w_m) \leq s \right) - \P\left( K(v_m) \leq s \right)\P\left( K(w_m) \leq s \right) \right),
\end{align*} where $v_m,w_m \in \T_n$ are chosen for all $m\geq 1$ such that the equality $|v_m \wedge w_m| = n-m$ holds. Splitting the last sum at $n/2$, we have, using Lemma \ref{lem:Decoupling1Segment}, that 
\begin{align*}
\sum_{m=1}^{n/2-1} k^{n+m} \P\left( K(v_m) \leq s, K(w_m) \leq s \right) & \leq 
\sum_{m=1}^{n/2-1} k^{n} \P\left( K_n \leq s \right) k^m \P\left( K_m \leq s \right) \\&= \E[M_n(s)] \sum_{m=1}^{n/2-1}k^m \P\left( K_m \leq s \right) .
\end{align*} Recall that $z_n$ is of order $\sqrt{n}$. Hence, since $s^{-}_{m} \geq s^{+}_{n}$ for all $m\leq n/2$ when $n$ is large enough, we can use Lemma \ref{lem:SecondMomentExpectation} to see that $\sum_{m=0}^{n/2}k^m \P\left( K_m \leq s \right)$ is bounded from above uniformly in $n$. For the remaining terms, we apply Lemma \ref{lem:Decoupling2Ancestor} to get that
\begin{align*}
k^{n+m}\left(\P\left( K(v_m) \leq s, K(w_m) \leq s \right) -  \P\left( K(v) \leq s \right)^2 \right) \leq \E[M_n(s)]  k^{m}\left(\P\left( K_n \in [s, s+q^mx]\right)  + \P\left( K_\infty > x \right) \right)
\end{align*} holds for all $x \geq 0$. Choose $x=n^2$ and use Lemma \ref{lem:Decoupling3Continuity}  as well as Theorem \ref{thm:smalls} to obtain
\begin{align*}
 \sum_{m=n/2}^{n-1} k^{m}\left(\P\left( K_n \in [s, s+q^mn^2] \right)\right) &\leq 
 n^2 q^{n/2}\sum_{m=n/2}^{n-1} k^{m} \P\left( K_{n-1} \leq q^{-1}2s \right) \leq 1
\end{align*} for all $n$ large enough.
Note that $nk^m\P(K_{\infty}>n^2)\leq 1$ holds for all $m\leq n$ with $n$ sufficiently large by Lemma \ref{lem:tails}. Hence, combining the previous observations, we obtain that
\begin{align*}
\sum_{m=n/2}^{n-1} k^{n+m} \left(\P\left( K(v_m)\leq s, K(w_m) \leq s \right) -  \P\left( K_n \leq s \right)^2 \right)  \leq 2 \E[M_n(s)]
\end{align*} holds for all $x \geq 0$ and all $n$ large enough.
Thus, we conclude that the variance $\Var(M_n(s_n^+))$ is of order at most $\E[M_n(s_n^+)]$. Using the Paley–Zygmund inequality, we have, writing again $s = s_n^+$, 
\begin{equation*}
\P\left( \exists  v \in \T_n \colon \log K(v)\leq \log s \right) =  \P\left( M_n(s) >0 \right) \geq  \frac{\E[M_n(s)^2]}{\E[M_n(s)]^2} = 1 - 
 \frac{\Var(M_n(s))}{\E[M_n(s)]^2} \geq 1- \frac{c}{n^2}
\end{equation*} for all $n$ large enough, where we used Lemma \ref{lem:SecondMomentExpectation} for the last inequality. Applying the Borel-Cantelli lemma, this gives us the upper bound on $\log K_{n}^{\min}$ in \eqref{eq:ConvergenceRateKmin}, i.e.\, $\log K_{n}^{\min}
\leq \log s_n^+$ almost surely for n large enough.

{
We now obtain the statement in Theorem \ref{thm:smallest} from the stronger statement given in \eqref{eq:ConvergenceRateKmin}. First we note that with $w_n$ as in the statement of Theorem \ref{thm:smallest}, by \eqref{eq:znasymptotics} we have $w_n = z_n  + O( \frac{ \log n }{ \sqrt{n}}  )$. In particular, since $\frac{ \log^2 z_n }{ z_n} = O \left( \frac{ \log^2 n }{ \sqrt{n}} \right) = o( n^{-1/3} )$, we obtain that there exists almost surely an $n_0$ in $\mathbb{N}$ such that for all $n \geq n_0$ we have 
\begin{align*}
\log K_n^{\min} \in \left[ - w_n - \frac{1}{n^{1/3} },  - w_n + \frac{1}{ n^{1/3}}  \right]
\end{align*}
which is precisely the statement of Theorem \ref{thm:smallest}.
}
\end{proof}

\subsection{Proof of Theorem \ref{thm:smallest}}

We are now ready to use Theorem \ref{thm:smallestnew} to give a proof of Theorem \ref{thm:smallest}.

\begin{proof}[Proof of Theorem \ref{thm:smallest} using Theorem \ref{thm:smallestnew}]

{
Let $k^{ - M_t}$ denote the size of the largest fragment in the process at time $t$. Let
\begin{align*}
T_n := \sup \{ t \geq 0 : M_t = n \}
\end{align*}
denote the last time at which there was a fragment of size $k^{ - n}$ in the process. In particular, based on our discussion in Section \ref{sec:time}, $T_n := \min_{v \in \mathbb{T}_n} S(v)$. Since $S_n^{\min} = q^{ - n} K_n^{ \min}$, by Theorem \ref{thm:smallestnew} there exists almost surely some $n_0$ in $\mathbb{N}$ such that for all $n \geq n_0$ 
\begin{equation} \label{eq:oksendal}
\exp \left\{ \frac{n}{\kappa} - w_n - \frac{1}{n^{1/3}} \right\} \leq T_n \leq \exp \left\{ \frac{n}{\kappa} - w_n + \frac{1}{n^{1/3}} \right\} ,
\end{equation} 
where we made use of the fact that $q^{- n } = e^{ n/\kappa}$. Set $\tilde{c} := \frac{1}{ 2\kappa} + \frac{1}{2} \log \kappa - 1 + \frac{1}{2} \log (2\gamma)$ and for $\sigma \in \{-1,+1\}$ define 
\begin{equation} \label{eq:pdef}
 p_\sigma (x) := \exp \left\{ \frac{x}{\kappa} - \sqrt{\frac{2\gamma}{\kappa}x} +  \tilde{c} + \sigma x^{-1/3}\right\} . 
\end{equation}
Using the definition of $w_n$ given in the statement of Theorem \ref{thm:smallestnew}, \eqref{eq:oksendal} reads as saying
\begin{align*}
p_{-1}(n) \leq T_n \leq p_{+1}(n)
\end{align*}
for all $n \geq n_0$. In particular, we are in the setting of Lemma \ref{lem:convert}, so that there exists almost surely a $t_0$ in $\mathbb{R}_+$ such that for all $t \geq t_0$ 
\begin{equation} \label{eq:plusminus}
M_t \in \left\{ \lceil p_{+1}^{-1}(t) \rceil, \lceil p_{-1}^{ - 1}(t) \rceil \right\}.
\end{equation}
Setting $c := \tilde{c} + \frac{1}{2} \log \kappa - \gamma$ (so that it agrees with the constant $c$ given in Theorem \ref{thm:smallest}), a brief calculation inverting \eqref{eq:pdef} verifies that for $\sigma \in \{-1,+1\}$ we have
\begin{equation*}
p_\sigma^{-1}(t) := \kappa \left( \log t + \sqrt{ 2 \gamma \log t } - \frac{1}{ 2} \log \log t - c - \frac{\sigma}{ (\kappa \log t )^{1/3} } + o \left( \frac{1}{ \log^{1/3} t } \right) \right).
\end{equation*}
In particular, with $h(t)$ and $\mu_2 := 2 \kappa^{2/3}$  as in the statement of Theorem \ref{thm:smallest}, for all $t$ sufficiently large we have 
\begin{align*}
h(t) - \mu_2 \frac{ 1}{ \log^{1/3} t } \leq p_{+1}^{ - 1}(t) \leq p_{-1}^{-1}(t) \leq h(t) + \mu_2 \frac{1}{ \log^{1/3}t }.
\end{align*}
Moreover, for all $t$ sufficiently large we have $\{ \lceil p_{+1}^{-1}(t) \rceil, \lceil p_{-1}^{ - 1}(t) \rceil \} \subseteq \{ \ \lceil h(t) - \mu_2 \frac{ 1}{ \log^{1/3} t } \rceil , \lceil h(t) +  \mu_2 \frac{ 1}{ \log^{1/3} t } \rceil \}$. The statement of Theorem \ref{thm:smallest} now follows from \eqref{eq:plusminus}.  
}
 \end{proof}

\section*{Appendix}
In this section we provide a proof of Lemma \ref{lem:fkg}.
\begin{proof}[Proof of Lemma \ref{lem:fkg}]
We begin with the general observation stated in \eqref{replacebyone}. 
Using~\eqref{replacebyone}, we will now prove the following inequality for $(S(v))_{v \in \T}$ (defined in \eqref{recforS}):
	\begin{align} \label{eq:rednew}
		\mathbb{P} \left( \bigcap_{ i = 0}^{ \ell - 1 } \bigcap_{ v \in \T_{n+i}}\{ S(v) > t_{v} \} \right) \geq \prod_{ i = 0}^{ \ell - 1} \prod_{ v \in \T_{n+i}}  \mathbb{P} \left( S(v) > t_{v} \right)
	\end{align}
	for any choice of real numbers $t_v$, where $v\in \bigcup_{j=n}^{n-\ell-1} \T_j$. First invoke the branching property to write
	\begin{equation}\label{eq:4:neq1}
		\mathbb{P} \left( \bigcap_{ i = 0}^{ \ell - 1 } \bigcap_{ v \in \T_{n+i}}\{ S(v) > t_{v} \} \right) = \prod_{w \in \T_1} \mathbb{P} \left( \bigcap_{ i = 0}^{ \ell - 1 } \bigcap_{ v \in \T_{n+i}, \: v\geq w}\{ S(v) > t_{v} \} \right).
	\end{equation}
	For $w \in \T_1$ and $v \in \T_{n+i}$ with  $v\geq w$, we have $S(v) = S(w) + (S(v)-S(w))$, where $S(w)$ is independent from $(S(v)-S(w))_{ v \in \bigcup_{i=0}^{\ell-1}\T_{n+i}, \: v\geq w}$. Applying~\eqref{replacebyone}, we have that
	\begin{align}
		\mathbb{P} \left( \bigcap_{ i = 0}^{ \ell - 1 } \bigcap_{ v \in \T_{n+i}, \: v\geq w}\{ S(v) > t_{v} \} \right) &= \mathbb{P} \left( \bigcap_{ i = 0}^{ \ell - 1 } \bigcap_{ v \in \T_{n+i}, \: v\geq w}\{ S(v)-S(w)+S(w) > t_{v} \} \right)\nonumber\\ 
			& \geq \mathbb{P} \left( \bigcap_{ i = 0}^{ \ell - 1 } \bigcap_{ v \in \T_{n+i}, \: v\geq w}\{ S(v)-S(w)+S_v(w) > t_{v} \} \right) ,\label{eq:4:goback}
	\end{align}
	where $(S_v(w))_{ v \in \bigcup_{i=0}^{\ell-1}\T_{n+i}, \: v\geq w}$ are i.i.d.\ copies of $S(w)$. Note that
	\begin{equation*}
		\mathbb{P} \left( \bigcap_{ i = 0}^{ \ell - 1 } \bigcap_{ v \in \T_{n+i}, \: v\geq w}\{ S(v)-S(w) > u_{v} \} \right) = \mathbb{P} \left( \bigcap_{ i = 0}^{ \ell - 1 } \bigcap_{ v \in \T_{n-1+i}, }\{ S(v)> u_{v} \} \right).
	\end{equation*}
	Now we can go back to~\eqref{eq:4:goback}, condition on the values of $S_v(w)$ and apply~\eqref{eq:4:neq1}. Repeating this procedure $n$ times yields 
	\begin{equation}\label{eq:4:neqn}
		\mathbb{P} \left( \bigcap_{ i = 0}^{ \ell - 1 } \bigcap_{ v \in \T_{n+i}}\{ S(v) > t_{v} \} \right) \geq \prod_{w \in \T_n} \mathbb{P} \left( \bigcap_{ i = 0}^{ \ell - 1 } \bigcap_{ v \in \T_{n+i}, \: v\geq w}\{ S(v) > t_{v} \} \right).
	\end{equation}
	To obtain~\eqref{eq:rednew}, it is therefore sufficient to prove that, for all $w \in \T_n$,
	\begin{equation}\label{eq:4:neq0}
		\mathbb{P} \left( \bigcap_{ i = 0}^{ \ell - 1 } \bigcap_{ v \in \T_{n+i}, \: v\geq w}\{ S(v) > t_{v} \} \right) \geq \prod_{ i = 0}^{ \ell - 1} \prod_{ v \in \T_{n+i}, \: v \geq w}  \mathbb{P} \left( S(v) > t_{v} \right).
	\end{equation}
	We can achieve this by using the same arguments as above since 
	\begin{align*}
		\mathbb{P} \left( \bigcap_{ i = 0}^{ \ell - 1 } \bigcap_{ v \in \T_{n+i}, \: v\geq w}\{ S(v) > t_{v} \} \right) &= \mathbb{P} \left( \bigcap_{ i = 0}^{ \ell - 1 } \bigcap_{ v \in \T_{n+i}, \: v\geq w}\{ S(v)-S(w)+S(w) > t_{v} \} \right)\nonumber\\ 
			& \geq \mathbb{P} \left( \bigcap_{ i = 0}^{ \ell - 1 } \bigcap_{ v \in \T_{n+i}, \: v\geq w}\{ S(v)-S(w)+S_v(w) > t_{v} \} \right) \nonumber \\
			& =  \P(S(w) > t_w)\mathbb{P} \left( \bigcap_{ i = 1}^{ \ell - 1 } \bigcap_{ v \in \T_{n+i}, \: v\geq w}\{ S(v)-S(w)+S_v(w) > t_{v} \} \right).
	\end{align*}
	Using the same inductive procedure as before gives~\eqref{eq:4:neq0}. In combination with~\eqref{eq:4:neqn}, this  yields~\eqref{eq:rednew}. The claim now follows by taking in \eqref{eq:rednew} $t_v \to -\infty$ if $u_{i,j}\neq v$ for all $u_{i,j}\in \textbf{u}$, and $t_v = t_{i,j}q^{|v|}-\gamma |v|$ if $v= u_{i,j}$.
\end{proof}

\subsection*{Acknowledgement}
SJ and JP are supported by the Austrian Science Fund (FWF) Project P32405 \textit{Asymptotic geometric analysis and applications} of which JP is principal investigator. DS thanks the Studienstiftung des deutschen Volkes and the TopMath program for financial support. The research of PD was supported by the Alexander von Humboldt Foundation.
We thank G\"unter Last for answering questions about point processes.

\bibliography{geometric_fragmentations}
\bibliographystyle{abbrv}

\par\bigskip
\begin{footnotesize}
  \begin{tabular}{l}%
    \textsc{Piotr Dyszewski, Nina Gantert and Dominik Schmid}\\
    {Fakultät für Mathematik, Technische Universität München}\\
    \textit{E-mail addresses}:
     \href{mailto:{piotr.dyszewski@tum.de}}{piotr.dyszewski@tum.de}, \href{mailto:{gantert@ma.tum.de}}{gantert@ma.tum.de}, \href{mailto:{dominik.schmid@tum.de}}{dominik.schmid@tum.de}
  \end{tabular}
\par \medskip
  \begin{tabular}{l}%
    \textsc{Samuel G.G.\ Johnston}\\
    {Department of Mathematical Sciences, University of Bath}\\
    \textit{E-mail address}:
      \href{mailto:{sgj22@bath.ac.uk}}{sgj22@bath.ac.uk}
  \end{tabular}
\par\medskip
  \begin{tabular}{l}%
    \textsc{Joscha Prochno}\\
    {Institute of Mathematics and Scientific Computing, University of Graz}\\
    \textit{E-mail address}:
 \href{mailto:{joscha.prochno@uni-graz.at}}{joscha.prochno@uni-graz.at}
  \end{tabular}
\end{footnotesize}

\end{document}